\newcommand{\R}{\mathbb{R}}
\newcommand{\N}{\mathbb{N}}
\newcommand{\C}{\mathbb{C}}
\newcommand{\ov}[1]{\overline{#1}}
\newcommand{\Hscurl}[2]{H^{#1}(\operatorname{curl};#2)}
\newcommand{\Hcurl}[1]{H(\operatorname{curl};#1)}
\newcommand\supp{\operatorname{supp}} 
\newcommand\spec{\operatorname{Spec}}
\newcommand\p{\partial}
\newcommand\spn{\operatorname{span}} 
\newcommand\osupp{\operatorname{osupp}}
\newtheorem{thm}{Theorem}[section]
\newtheorem{lem}[thm]{Lemma}
\newtheorem{prop}[thm]{Proposition}
\newtheorem{rem}[thm]{Remark}
\theoremstyle{definition}
\numberwithin{equation}{section}
\begin{document}

\title[]{The linearized monotonicity method for elastic waves and the separation of material parameters}

\date{}

\author[]{Sarah Eberle-Blick}
\address{Sarah Eberle-Blick, Institute of Mathematics, Goethe University Frankfurt, Germany}
\email{eberle@math.uni-frankfurt.de}

\author[]{Valter Pohjola}
\address{Valter Pohjola, Unit of Applied and Computational Mathematics, University of Oulu, Finland}
\email{valter.pohjola@gmail.com}

\begin{abstract}
We derive a linearized version of the monotonicity method for 
shape reconstruction using time harmonic elastic waves. The linearized
method provides an efficient version of the method, drastically reducing computation time.
Here we show that the linearized method has some additional advantages.
The linearized method can in particular be used to 
obtain additional information on the material parameters, 
and is able to partially separate and identify the supports of the Lamé parameters.
\end{abstract}

\maketitle

\tableofcontents

\section{Introduction} 
\noindent
Waves in an elastic body provide a natural way to obtain information about its structure,
which leads to various forms of inverse problems.
One such problem is to identify the shape of a penetrable inclusion from measurements at the boundary. 
This is of great interest in various imaging applications, and the
mathematical treatment of this problem has received considerable attention in various physical settings. 
See e.g. \cite{CK98,GK08,CC06, Pot06} and the references therein. 
Here we investigate the monotonicity method for this inclusion reconstruction problem in an elastic body
using time harmonic waves continuing our work in \cite{EP24}.
\\
\\
The elastic properties of an isotropic elastic body $\Omega \subset \R^3$ is in the linear regime
characterized  by the Lamé parameters $\lambda$ and $\mu$.
The effect of a time harmonic oscillation in such a body $\Omega$ is given by
the Navier equation. This describes the oscillations of the displacement field 
$u : \Omega \to \R^3$, $u \in H^1(\Omega)^3$ of the solid body $\Omega$, due to disturbances.
% due to an oscillation and the Lamé parameters.
Here we consider a material body $\Omega \subset \R^3$ with a
$C^{1,1}$-boundary\footnote{ The space $C^{1,1}$ consists of Lipschitz
continuous functions with a Lipschitz continuous derivative.}, % See e.g. Adams book
and consider the Navier equation in terms of the following boundary 
value problem
\begin{align}  \label{eq_bvp1}
\begin{cases}
\nabla \cdot (\C\,  \hat \nabla u )  + \omega^2\rho u &= 0, \quad\text{in}\,\,\Omega,\\
\hspace{1.8cm}(\C\,  \hat \nabla u ) \nu  &= g, \quad\text{on}\,\,\Gamma_N, \\
\hphantom{\hspace{1.8cm}(\C\,  \hat \nabla u ) } u  &= 0, \quad\text{on}\,\,\Gamma_D, %\\
% \text{boundary condition},
\end{cases}
\end{align}
where $\Gamma_N$ and $\Gamma_D$ are such that
$$
\Gamma_N, \Gamma_D \subset \p\Omega \text{ are open }, \qquad \Gamma_N \neq \emptyset, \qquad
\p \Omega = \ov{\Gamma}_N \cup \ov{\Gamma}_D,
$$
and where $\hat \nabla u  = \frac{1}{2}(\nabla u + (\nabla u)^T)$ is the symmetrization of the Jacobian
or the strain tensor, 
and $\C$ is the 4th order tensor defined by
\begin{equation} \label{eq_Cdef}
\begin{aligned}
(\C A)_{ij} = 2\mu A_{ij} + \lambda \operatorname{tr}(A) \delta_{ij},
\quad \text{ where } A \in \R^{3 \times 3},
\end{aligned}
\end{equation}
and $\delta_{ij}$ is the Kronecker delta. Here
$\lambda,\mu \in L^\infty_+(\Omega)$ are the scalar functions that specify the Lamé parameters, which determine
the elastic  properties of the material, $\rho \in L^\infty_+(\Omega)$ is the density of the material,
and $\omega \neq 0$ the angular frequency of the oscillation, and $\nu$
is the outward pointing unit normal vector to the boundary $\p \Omega$. 
The vector field $g \in L^2(\Gamma_N)^3$ acts as the source
of the oscillation, and since $\C \,\hat \nabla u$ equals by Hooke's law to the Cauchy stress tensor, we see 
that the boundary condition $g$ specifies the traction on the surface $\p \Omega$.

We also make the standing assumption that $\omega \in \R$ is not a resonance frequency. When 
this assumption holds, the problem \eqref{eq_bvp1} admits a unique solution
for a given boundary condition $g\in L^2(\Gamma_N)^3$. We can thus define the Neumann-to-Dirichlet map
$\Lambda:L^2(\Gamma_N)^3 \to L^2(\Gamma_N)^3$, as
\begin{equation} \label{eq_ND_map}
\begin{aligned}
\Lambda: g \mapsto u|_{\Gamma_N}. 
\end{aligned}
\end{equation}
Thus $\Lambda$  maps the traction to the displacement $u|_{\Gamma_N}$ on the boundary.

\medskip
\noindent
In this paper we are concerned with formulating a monotonicity based shape reconstruction method
for elasticity as in \cite{EP24} and \cite{EH21}.  
We are more specifically interested in determining the shape of perturbations of the material parameters
$\lambda, \mu$ and $\rho$, in an otherwise homogeneous  background material characterized by the constant
material parameters $\lambda_0, \mu_0, \rho_0 > 0$. We would  ideally want to reconstruct the sets
$$
\supp(\lambda-\lambda_0), \quad
\supp(\mu-\mu_0),  \quad
\supp(\rho-\rho_0)
$$
from the Neumann-to-Dirichlet map $\Lambda$. It is however 
unknown how to solve the problem in this generality.
For this reason we introduce the notion of the outer support for functions and sets.
The outer support of a function $f$ corresponds to
$$
\osupp(f) = ``\text{complement of the component of }
\big\{ x \in \Omega \,:\, f(x) = 0 \big\} \text{ connected to }\p \Omega".
$$
See definition \ref{eq_def_osupp} and \cite{HU13} for details.
For a set $V\subset \Omega$ we set $\osupp(V) = \osupp(\chi_V)$. 
The set $\osupp(V)$ can be thought of as  a set obtained from $V \subset \Omega$ by filling 
in any internal cavities. 

Our main result is the formulation of a linearized and improved version of the monotonicity based shape reconstruction
procedure in \cite{EP24}. This is based on Theorem \ref{thm_Lin_inclusionDetection} and elaborated in sections
\ref{sec_reconstruct_D} and \ref{sec_mu_recons}.
Firstly we show that we can  reconstruct the set $\osupp(D)$, where 
$$
D = \supp(\lambda-\lambda_0) \cup \supp(\mu-\mu_0) \cup \supp(\rho- \rho_0),
$$
as we did in \cite{EP24}, with a linearized method. This results in a significant reduction in 
the number of computations needed to do the reconstruction, and is therefore of great interest
from a computational point of view.
The linearized method presented here is thus an improvement of the method in
\cite{EP24} in terms of computation time. The linearized method is drastically faster to compute,
which is the original motivation for considering the linearized method, see \cite{HU13,HS10}.

Secondly and somewhat surprisingly we show that the linearized method formulated 
here is an improvement on the method of \cite{EP24}  and \cite{EH21}, in that
it allows us to reconstruct additional structure besides $\osupp(D)$. 
We show how to reconstruct 
$$
\osupp(\mu-\mu_0), \quad\text{ and }\quad \osupp(D)\setminus \osupp(\mu-\mu_0), 
$$
as illustrated in figure \ref{fig_intro}.
We can thus determine the shape of the perturbation in $\mu$, modulo internal cavities, and also
which parts of $\osupp(D)$ that do and do not belong to it.
One should however note that we need to assume that $\rho$ is constant to achieve this improvement.
This is nevertheless a significant improvement of the methods in \cite{EP24} and \cite{EH21},
which are only able to recover $\osupp(D)$. Recovery of multiple coefficients of the Navier equation 
is in general a challenging task, and even the uniqueness of the inverse problem remains open.

\begin{figure}[h]
\centering
\includegraphics[scale=0.65]{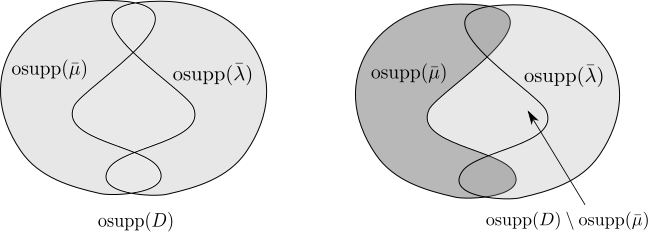}
\caption{
Improvement in the reconstruction. Here $\bar \mu := \mu - \mu_0$ and $\bar \lambda := \lambda - \lambda_0$.
The earlier algorithms in \cite{EP24} and \cite{EH21} essentially reconstruct $\osupp(D)$,
as illustrated on the left hand side.  The linearized method presented here can recover additional structure,
and in particular  the set $\osupp(\bar \mu)$ 
and $\osupp(D) \setminus \osupp(\bar \mu)$, as indicated in the dark grey and light grey regions on the right hand side.}\label{fig_intro}
\end{figure}

\noindent
We also test our algorithm numerically and take a look at two different test
models as well as different kinds of boundary loads. As a first example, we
consider an elastic body with two separated inclusions (see Subsection
\ref{sep_inclu}), where one of the inclusions differs from the background
material only in $\mu$ and the other only in $\lambda$. In the second example,
the test model contains two inclusions that intersect (see Subsection
\ref{int_inclu}). 
Furthermore, we analyse the simulations based on only tangential or only normal boundary loads in
Subsection \ref{diff_boundary}.

\medskip
\noindent
Let us briefly review some of the earlier work related to this problem.
Shape reconstruction  and  related reconstruction methods 
are of interest in geophysical and engineering applications, and
the Navier equation has therefore received a fair amount of attention, 
and several methods have therefore been  studied in this context.
The first works on the shape reconstruction problem for the
Navier equation can be found in \cite{Ar01}, where the linear sampling method is used,
and in \cite{AK02}, which uses the factorization method. 
Reconstruction using the factorization method have been studied in
e.g. \cite{HKS12,HLZ13,EH19}.  Iterative and other  methods are used in \cite{SFHC14,BYZ19,BHQ13,BC05}.
The stationary case $\omega=0$ has also been studied in a number of works.
See e.g. \cite{EHMR21,EH21,II08,Ik99} and for the reconstruction based on data form a lab experiment in \cite{EM21}.
The shape reconstruction problem was originally studied in the context of scalar equations.
For work on electrical  impedance tomography, 
and acoustic and electromagnetic scattering see e.g. \cite{Ta06,Ge08,GK08,Ik90,HPS19b,CDGH20}.

The uniqueness for the inverse problem in isotropic linear elasticity  has been studied extensively.
The earliest work on this include \cite{Ik90} that deals with the linearized problem, and
\cite{ANS91} which deals with boundary determination. 
A significant step towards a global uniqueness result in the three dimensional case was 
given by \cite{NU94} and \cite{ER02}, where it is shown
that the Lamé parameters $\mu$ and $\lambda$ are uniquely determined, by the stationary Dirichlet-to-Neumann map
assuming that $\mu$ is close to a constant.
It should however be noted that the general uniqueness problem is still open, and 
that the current results require some form of smallness assumptions.

Finally let us mention some earlier work relating to the  monotonicity method.
The monotonicity method 
was first formulated in \cite{TR02,Ta06} and also \cite{Ge08,HU13}, for the conductivity equation.
In \cite{HPS19b} and \cite{HPS19a} the method was extended to the analysis of time harmonic waves.
And in \cite{EH21} the monotonicity method was formulated for the recovery of inhomogeneities
in an isotropic elastic body in the stationary case.
The shape reconstruction problem has in the case of  time harmonic waves 
been studied with the monotonicity method in \cite{HPS19b,HPS19a,GH18,Fu20a,Fu20b,AG23}.

\medskip
\noindent
We now turn to discussing some of the main ideas in this work.
When analyzing a vector equation such as \eqref{eq_bvp1} one hopes that the more complex
structure of the vector solutions, as compared to a scalar equation, could be used to analyze the inverse problem
and thus obtain information about both $\lambda$ and $\mu$.
The problem with this is in general that different types of waves tend to couple 
in non constant media and when boundaries are present.
The improved shape reconstruction scheme in Theorem \ref{thm_Lin_inclusionDetection} is inspired by
the fact that elastic waves decouple in a constant background medium into
 pressure wave and shear wave parts, which can be seen by inspecting the Helmholtz decomposition 
of the total wave (see for instance \cite{LL81}).
This suggest that we can create localized solutions with zero divergence provided that
the material parameters are constant.
The derivation of the linearized method  requires the localization of solutions of 
the constant background parameters that are constructed in section \ref{sec_loc2}.
Dealing with constant coefficients greatly simplifies the construction of zero divergence solutions,
since we can utilize the split of the solution into a shear wave part with no divergence, and
a pressure wave part with divergence.
We translate the problem of finding zero divergence solutions to the constant coefficient 
Navier equations to finding suitable solutions of a vector Helmholtz equation,
which we in turn analyze using a second order system of Maxwell type.
A complication in the construction of the localized solutions comes from the fact that we are dealing
with time harmonic waves and the monotonicity inequality of Lemma \ref{lem_monotonicity_ineq1} only holds
in the orthogonal complement of a finite dimensional space $V \subset L^2(\Gamma_N)^3$.
We need in subsection \ref{sec_zero_div} to translate the finite dimensional constraint on the 
boundary condition $g$ of equation \ref{eq_bvp1}, which is of  the form $g \perp V$,
to a condition on the boundary value $f$ for the vector Helmholtz equation \eqref{eq_curlBVP}.
This condition will be of the form $f \in W_1$, where $W_1$ has finite codimension. 

The linearized monotonicity method is based on computing the Fréchet derivative of the  Neumann-to-Dirichlet operator. 
We like to note that a different approach is needed for evaluating the Fréchet derivative here, than in the stationary
case $\omega=0$ in \cite{EH21}. This is due to that equation
\eqref{eq_bvp1} includes the zero order density term $\rho$.
We are unable to use an explicit monotonicity inequality as in \cite{EH21}, to compute the Fréchet derivative.
We utilize instead elliptic estimates. See Proposition \eqref{prop_elipEst_1}.
The proof of the linearized shape reconstruction method of Theorem \ref{thm_Lin_inclusionDetection} differs 
also in this respect from \cite{EH21}. Here we again also need to use elliptic estimates, instead of solely relying on 
explicit monotonicity type inequalities, due to the zero order term in the equation \eqref{eq_bvp1}. 
Because of this we obtain less explicit constants in Theorem \ref{thm_Lin_inclusionDetection} than in the
corresponding result in \cite{EH21}.

Finally we like to point out that there is 
a major obstacle in recovering multiple coefficients from a single frequency $\omega$.
Unique determination of the parameters by $\Lambda$ 
is expected to fail  when a density term $\rho$ is added to the stationary equation,
and one expects there to be several distinct tuples of coefficients 
that match the given boundary measurements.
This is well understood in the scalar case and 
a straight forward mechanism for obtaining counter examples to uniqueness in this case
was given in \cite{AL98}.
This phenomenon affects in more general also vector valued equations,
and thus a  similar form of non-uniqueness should  effect  the corresponding inverse problem for the Navier equation.
It is worth to note that this form of non-uniqueness is related to the existence of certain forms
of gauge transformations that transform the material parameters, but leaves the boundary data alone, see
\cite{IS23}.
We note that our shape reconstruction method works in the general case
only for inhomogeneities where the density either remains constant or becomes 
smaller than the background density. 

One should also note that knowing $\Lambda$ for multiple $\omega$ 
could possibly remove the non-uniqueness when  trying to recover several of  the coefficients in \eqref{eq_bvp1}.
See \cite{Na88}. One can also impose additional conditions on the coefficients, such as 
piecewise analyticity 
in order to recover several coefficients using a single frequency, see \cite{Ha09}.
This is explored in the case of the Navier equation and  piecewise constant coefficients   
in \cite{BHFVZ17}.

\medskip
\noindent
The paper is structured as follows. Section \ref{sec_defs} contains some definitions and preliminaries that 
are used through out the paper. 
After this we compute the Fréchet derivative of the ND-map in section \ref{sec_frechet}.
In sections \ref{sec_loc1} and \ref{sec_loc2} we construct localized solutions
in particular for the constant material parameter case, with a finite dimensional constraint.
In sections \ref{sec_mono_shape}, \ref{sec_reconstruct_D} and \ref{sec_mu_recons} we 
formulate and derive the linearized reconstruction procedure. 
Section \ref{sec_numerics} contains numerical tests. The appendix in section \ref{sec_ellipticEst}
contains elliptic estimates on the source problem that we need in the earlier sections.
Section \ref{sec_maxwell} in the appendix finally deals with the Fredholm theory of a second order system of Maxwell type.

\section{Definitions and  other preliminaries} \label{sec_defs}

\noindent
In this section we review some preliminaries that will be needed in the rest of the paper.
Our notations are to a large extent the same notations as in \cite{EP24}.
Through out the paper we use the following definitions related to function spaces. 
The space $H^k(\Omega)$ denotes the $L^2(\Omega)$ based Sobolev space with $k$ weak derivatives.
Furthermore we let
$$	
L^\infty_+(\Omega) := \big\{ f \in L^\infty(\Omega) \;:\; \operatorname{essinf}_\Omega f > 0 \big\}.
$$
The notation $Z^n$  for a function space $Z$
is understood as  $Z^n := Z \times \dots \times Z$, where the right hand side  contains $n$ copies of $Z$.
We denote the  $L^2$-inner product by $(\cdot,\cdot)_{L^2}$, so that 
$$
(u,v)_{L^2(\Omega)^{n\times m}} := \int_\Omega u : v\,dx, \quad u,v \in L^2(\Omega)^{n \times m},\quad n,m\geq 1,
$$
where $:$ is the Frobenius inner product defined below.
For orthogonality with respect to the $L^2$-inner product, we use the notation $\perp$, unless otherwise stated,
so that 
$$
u \perp v \qquad \Leftrightarrow \qquad (u,v)_{L^2(\Omega)^n} = 0, \quad  \text{ when } u,v \in L^2(\Omega)^n.
$$

We will now consider to the well-posedness of problem \eqref{eq_bvp1}.
The bilinear form $B$ related to equation \eqref{eq_bvp1} is given by
\begin{align}  \label{eq_weak}
B(u,v)  := -\int_\Omega 
2 \mu \hat \nabla u :\hat \nabla v + \lambda \nabla \cdot u \nabla \cdot v - \omega^2\rho u\cdot v\,dx, 
\end{align}
for all $u,v \in H^1(\Omega)^3$. Where the  Frobenius inner product $A:B$ is defined as
$$
A:B = \sum A_{ij}B_{ij},  \qquad A,B \in \R^{m\times n}.
$$
Note that the Euclidean norm on $\R^{m \times n}$, $m,n \in \N$, is given by
$|A| = (A:A)^{1/2}$, for $A \in \R^{m \times n}$.
We will use the notation
$$
L_{\lambda,\mu,\rho} u := \nabla \cdot ( \C \hat \nabla u)+ \omega^2\rho u. 
$$
Note that in an isotropic medium characterized by the Lamé parameters the above 
equation simplifies to
$$
L_{\lambda,\mu,\rho} u = \nabla \cdot ( 2 \mu \hat \nabla u + \lambda (\nabla \cdot u) I ) + \omega^2\rho u.
$$
A weak solution to \eqref{eq_bvp1} is defined as a $u \in H^1(\Omega)^3$,
for which $u|_{\Gamma_D} = 0$ and 
\begin{align}  \label{eq_weak2}
B(u,v)  = - \int_{\Gamma_N} g\cdot v \,dS,
\qquad \forall v \in \{ u \in H^1(\Omega)^3 \,:\, u|_{\Gamma_D} = 0 \}.
\end{align}
For the existence and uniqueness of a weak solution to \eqref{eq_bvp1}, when $\omega$
is not a resonance frequency see Corollary 3.4 in \cite{EP24}.

The existence and uniqueness of a weak solution to \eqref{eq_bvp1} implies that 
the Neumann-to-Dirichlet map $\Lambda$ given by \eqref{eq_ND_map} is well defined. $\Lambda$ is related to $B$ as  follows.
When the material parameters are regular, and $u$ solves \eqref{eq_bvp1} with $g$, and
$v$ solves \eqref{eq_bvp1} with $h$, we see by integrating by parts that
\begin{align}  \label{eq_NDmap}
B(u,v)  = -\int_{ \Gamma_N} g \cdot v \,dS = -( g, \Lambda h)_{L^2(\Gamma_N)^3}. 
\end{align}
Furthermore we abbreviate the  boundary condition in \eqref{eq_bvp1} by
\begin{align*} 
\gamma_{\mathbb{C},\Gamma} u  = (\C \, \hat \nabla u ) \nu |_{\Gamma},
\end{align*}
or with $\gamma_\mathbb{\C} u $ if the boundary is clear from the context. 
Note that these notations are formal when $u \in H^1(\Omega)^3$, since we cannot in general take the trace of an 
$L^2(\Omega)^3$ function. In the low regularity case we understand the boundary condition in a weak sense.
And define $\gamma_{\mathbb{C}} u \in L^2(\Gamma_N)^3$, 
for $u \in H^1(\Omega)^3$ which solve \eqref{eq_bvp1}, as the element in $L^2(\Gamma_N)^3$, for which
\begin{align*} 
-(\gamma_{\mathbb{C}} u, \, \varphi|_{\Gamma_N}  )_{L^2(\Gamma_N)^3}
= B(u,\varphi), \qquad \forall \varphi \in H^1(\Omega)^3. 
\end{align*}
For the convenience of the reader we also list the following integration by parts formulas
that are used through out the paper. Firstly we have the matrix form of the divergence theorem
$$ 
\int_\Omega \nabla \cdot  A  u \,dx = - \int_\Omega A:\hat \nabla u \,dx + \int_{\p \Omega} A \nu \cdot u\,dS,
\quad A \in H^1(\Omega)^{n\times n},
$$
where $u,v \in H^1(\Omega)^3$. See \cite{Ci88} p. xxix. We also use the integration by parts  formula
\begin{align*} %\label{eq_intbyparts1}
\int_\Omega \nabla \times u \cdot v \,dx = \int_\Omega  u \cdot \nabla \times v \,dx
+
\int_{\p \Omega} (\nu \times u) \cdot v\,dS, \quad u,v \in H^1(\Omega)^3.
\end{align*}
Next we give a definition of the notion of outer support of a function or a set.
The outer support (with respect to $\p\Omega$) of a measurable function $f: \Omega \to \R$   is defined as 
$$
\operatorname{osupp} (f) := \Omega \setminus \bigcup \, \big \{  U \subset \Omega \,:\, U  
\text{ is relatively open and connected to $\p \Omega$},
f|_U \equiv 0  \big\}.
$$
For more on this see \cite{HU13}.
It will be convenient to extend this definition to sets. We define the outer support of a measurable
set $D \subset \Omega$ 
(with respect to $\p\Omega$) as
\begin{align}  \label{eq_def_osupp}
\osupp (D) := \osupp(\chi_D)
\end{align}
where $\chi_D$ is the characteristic function of the set $D$. 

\medskip
\noindent
We will need the monotonicity inequality of Proposition 4.1 in \cite{EP24}, which is given by the following lemma.

\begin{lem} \label{lem_monotonicity_ineq1}
Let $\mu_j,\lambda_j,\rho_j \in L^\infty_+(\Omega)$, for $j=1,2$ and $ \omega \neq 0$.
Let $u_j$ denote the solution to \eqref{eq_bvp1} where $\mu=\mu_j, \lambda= \lambda_j$ and $ \rho=\rho_j$,
with the boundary value  $g$. There exists a finite dimensional subspace $V\subset L^2( \Gamma_N)^3$, 
such that
\begin{align*}  %\label{eq_mono1}
\big(  (\Lambda_2 - \Lambda_1)g, \,g  \big )_{L^2(\Gamma_N)^3} 
\geq
\int_\Omega 
2(\mu_1-\mu_2 ) |\hat \nabla u_1 |^2 + (\lambda_1 - \lambda_2 ) |\nabla \cdot u_1|^2  + \omega^2(\rho_2-\rho_1) |u_1|^2 \,dx, 
\end{align*}
when $g \in V^\perp$. %We have moreover that $\dim(V) \leq d(\lambda_2,\mu_2,\rho_2)$.
\end{lem}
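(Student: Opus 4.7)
The plan is to reduce the monotonicity inequality to a clean algebraic identity followed by a spectral argument for an indefinite quadratic form. Taking $v = u_j$ as test function in the weak formulation \eqref{eq_weak2} yields $(g, \Lambda_j g)_{L^2(\Gamma_N)^3} = -B_j(u_j, u_j)$. Setting $w := u_1 - u_2$, expanding $B_2(u_2, u_2)$ through $u_2 = u_1 - w$, and using the symmetry of $B_2$ together with the identity $B_2(u_2, u_1) = B_1(u_1, u_1)$ (both equal $-\int_{\Gamma_N} g \cdot u_1 \, dS$ by applying the weak formulations for $u_1$ and $u_2$ to the test function $u_1$), one arrives at
$$
((\Lambda_2 - \Lambda_1)g, g)_{L^2(\Gamma_N)^3} \;=\; (B_2 - B_1)(u_1, u_1) - B_2(w, w).
$$
A direct computation shows that $(B_2 - B_1)(u_1, u_1)$ is exactly the right hand side of the claimed inequality, so the lemma reduces to proving
$$
-B_2(w, w) \;=\; \int_\Omega \bigl( 2\mu_2 |\hat\nabla w|^2 + \lambda_2 |\nabla \cdot w|^2 - \omega^2 \rho_2 |w|^2 \bigr)\, dx \;\geq\; 0,
$$
for $w = u_1 - u_2$ once $g$ is restricted to an appropriate subspace.

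To secure this nonnegativity I would analyze $Q(v) := -B_2(v, v)$ on the energy space $X := \{v \in H^1(\Omega)^3 : v|_{\Gamma_D} = 0\}$. The principal part $\int 2\mu_2 |\hat\nabla v|^2 + \lambda_2 |\nabla \cdot v|^2$ is coercive on $X$ by Korn's inequality (using $\Gamma_D \neq \emptyset$), while $-\omega^2 \int \rho_2 |v|^2$ is a compact perturbation via the compact embedding $H^1 \hookrightarrow L^2$. The self-adjoint realization of $Q$ in $L^2(\Omega)^3$ is therefore a compact perturbation of a positive operator and has at most finitely many negative eigenvalues; since $\omega$ is not a resonance, $0$ is not among them. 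Letting $N \subset L^2(\Omega)^3$ be the finite dimensional span of the negative eigenfunctions, the spectral decomposition gives $Q(v) \geq 0$ for every $v \in X$ with $v \perp N$ in $L^2(\Omega)^3$.

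Finally, the well-posedness of \eqref{eq_bvp1} provides bounded solution maps $S_j : L^2(\Gamma_N)^3 \to L^2(\Omega)^3$, $g \mapsto u_j$, so $S_1 - S_2$ has a bounded $L^2$-adjoint. Setting $V := (S_1 - S_2)^* N$, a finite dimensional subspace of $L^2(\Gamma_N)^3$, one sees that $g \in V^\perp$ if and only if $w = (S_1 - S_2)g \perp N$ in $L^2(\Omega)^3$, which by the previous paragraph forces $-B_2(w, w) = Q(w) \geq 0$ and thus the desired inequality. The main obstacle is the spectral step: once coercivity of the principal part is secured by Korn's inequality with the mixed boundary condition, the Fredholm structure and the finite dimensionality of the non-positive part of $Q$ follow from standard compact perturbation theory, while the passage from interior $L^2(\Omega)^3$-orthogonality to boundary $L^2(\Gamma_N)^3$-orthogonality is a routine duality argument.
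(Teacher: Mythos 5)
Your proposal is correct and follows essentially the same route as the paper's source for this lemma (the paper imports it from Proposition 4.1 of [EP24] without reproving it): the quadratic-completion identity you derive is exactly the index-swapped version of the computation carried out in the proof of Lemma 3.1 of this paper, and the finite-dimensional subspace $V$ arises, as in [EP24]/[HPS19b], by pulling back the span of the negative eigenfunctions of the form $-B_2$ through the adjoint of the solution operator. One minor correction: the paper permits $\Gamma_D=\emptyset$ (and uses this case later), so you cannot rely on Korn's first inequality to make the principal part coercive on its own; instead use Korn's second inequality and absorb the resulting $\|v\|_{L^2}^2$ term into the compact perturbation together with $-\omega^2\rho_2|v|^2$, which is what your ``coercive plus compact'' spectral argument actually needs and which leaves the rest of the proof unchanged.
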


%%%%%%%%%%%%%%%%%%%%%%%%%%%%%%%%%%%%%%%%%%%%%%%%%%%%%%%%%%%%%%%%%%%%%%%%%%%%%%%%%%%%%%%%%%%%%%%%%%%%%%%%%%%%%%%

\section{The Fréchet derivative of the Neumann-to-Dirichlet map} \label{sec_frechet}
\noindent
Before we take a look at the Fréchet derivative of the Neumann-to-Dirichlet
map, we need some estimates which are given in the following lemmas.
\begin{lem}\label{lem_ab_1}
Let $\mu_j,\lambda_j,\rho_j \in L^\infty_+(\Omega)$, for $j=1,2$ and $ \omega \neq 0$.
Let $u_j$ denote the solution to \eqref{eq_bvp1} where $\mu=\mu_j, \lambda= \lambda_j$ and $ \rho=\rho_j$,
with the boundary value $g$. Then
\begin{align*}
\left ((\Lambda_2  - \Lambda_1) g, g \right )_{L^2(\Gamma_{\textup{N}})^3}
\leq &\int_\Omega2(\mu_1-\mu_2)|\hat\nabla u_2|^2+(\lambda_1-\lambda_2)|\nabla\cdot u_2|^2+\omega^2(\rho_2-\rho_1)|u_2|^2dx \\
&+\omega^2\int_\Omega\rho_1|u_1-u_2|^2dx .
\end{align*}
\end{lem}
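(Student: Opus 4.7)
The plan is to establish the inequality by computing $\bigl((\Lambda_2-\Lambda_1)g,g\bigr)_{L^2(\Gamma_N)^3}$ as an exact identity first, and then dropping a single non-negative term. The starting point is the identity
$$(\Lambda_j g,g)_{L^2(\Gamma_N)^3} = -B_j(u_j,u_j),$$
which follows from \eqref{eq_NDmap}, so that
$$\bigl((\Lambda_2-\Lambda_1)g,g\bigr)_{L^2(\Gamma_N)^3} = B_1(u_1,u_1) - B_2(u_2,u_2).$$
The strategy is to expand $B_1(u_1,u_1)$ around $u_2$, which is the natural candidate here because the right-hand side of the claim features $u_2$ (not $u_1$) in the leading expressions. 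This is the mirror of the expansion used for Lemma \ref{lem_monotonicity_ineq1}, and it is the reason the two bounds have different structure.

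Writing $w := u_1 - u_2$ and using bilinearity together with the symmetry of $B_1$, I would expand
$$B_1(u_1,u_1) = B_1(u_2,u_2) + 2 B_1(u_2,w) + B_1(w,w).$$
To evaluate the cross term, note that $w|_{\Gamma_D}=0$, so $w$ is an admissible test function for the weak formulation of $u_1$, giving $B_1(u_1,w) = -(g,w)_{L^2(\Gamma_N)^3}$. Hence $B_1(u_2,w) = B_1(u_1,w) - B_1(w,w) = -(g,w)_{L^2(\Gamma_N)^3} - B_1(w,w)$, and moreover $(g,w)_{L^2(\Gamma_N)^3} = (g,\Lambda_1 g - \Lambda_2 g)_{L^2(\Gamma_N)^3}$. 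Substituting back and solving the resulting linear relation for $\bigl((\Lambda_2-\Lambda_1)g,g\bigr)_{L^2(\Gamma_N)^3}$ gives the identity
\begin{align*}
\bigl((\Lambda_2-\Lambda_1)g,g\bigr)_{L^2(\Gamma_N)^3}
&= B_2(u_2,u_2) - B_1(u_2,u_2) + B_1(w,w).
\end{align*}

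The first difference unwinds immediately from \eqref{eq_weak} to
$$\int_\Omega 2(\mu_1-\mu_2)|\hat\nabla u_2|^2 + (\lambda_1-\lambda_2)|\nabla\cdot u_2|^2 + \omega^2(\rho_2-\rho_1)|u_2|^2\,dx,$$
while
$$B_1(w,w) = -\int_\Omega 2\mu_1|\hat\nabla w|^2 + \lambda_1|\nabla\cdot w|^2\,dx + \omega^2\int_\Omega \rho_1|w|^2\,dx.$$
The final step is to drop the non-positive contribution $-\int_\Omega 2\mu_1|\hat\nabla w|^2 + \lambda_1|\nabla\cdot w|^2\,dx$, which is legitimate since $\mu_1,\lambda_1 \in L^\infty_+(\Omega)$. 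Combining the two expressions yields exactly the stated upper bound.

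The key observation (and the reason this proof is essentially free of obstacles, in contrast with Lemma \ref{lem_monotonicity_ineq1}) is that after rewriting everything in terms of $w=u_1-u_2$, the only $w$-dependent terms that survive with an unfavorable sign are the purely quadratic gradient-type terms, and these drop in the direction of the desired inequality. No codimension restriction or unique continuation argument is needed, because the troublesome $\omega^2\rho_1|w|^2$ term appears with the correct sign and is precisely what produces the extra $\omega^2\int_\Omega \rho_1|u_1-u_2|^2\,dx$ on the right-hand side.
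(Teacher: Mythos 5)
Your proof is correct and takes essentially the same route as the paper: both arguments reduce to the exact identity $\bigl((\Lambda_2-\Lambda_1)g,g\bigr)_{L^2(\Gamma_N)^3}=B_2(u_2,u_2)-B_1(u_2,u_2)+B_1(u_1-u_2,u_1-u_2)$ and then discard the same non-positive term $-\int_\Omega 2\mu_1|\hat\nabla(u_1-u_2)|^2+\lambda_1|\nabla\cdot(u_1-u_2)|^2\,dx$. The only difference is presentational — the paper expands the squares explicitly using the variational identity with test function $v=u_2$, whereas you organize the same computation through bilinear-form algebra around $u_2$.
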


\begin{proof}
Since $\Lambda_2 g=u_2|_{\Gamma_{\textup{N}}}$, we can use the variational formula for $\Lambda_1$ and $\Lambda_2$ with $v=u_2$ and obtain
\begin{align*}
&\int_\Omega2\mu_1(\hat\nabla u_1:\hat\nabla u_2)+\lambda_1(\nabla\cdot u_1)(\nabla\cdot u_2)dx-\omega^2\int_\Omega\rho_1u_1\cdot u_2dx\\
&=\left(\Lambda_2 g,g\right )_{L^2(\Gamma_{\textup{N}})^3}\\
&=\int_{\Gamma_{\textup{N}}}g\cdot u_2 ds\\
&=\int_\Omega2\mu_2(\hat\nabla u_2:\hat\nabla u_2)+\lambda_2(\nabla\cdot u_2)(\nabla\cdot u_2)dx-\omega^2\int_\Omega\rho_2u_2\cdot u_2dx .
\end{align*}
\noindent
Thus
\begin{align*}
&\int_\Omega2\mu_1|\hat\nabla (u_1-u_2)|^2+\lambda_1|\nabla\cdot (u_1-u_2)|^2dx- \omega^2\int_\Omega\rho_1|u_1-u_2|^2dx\\
&\quad=\int_\Omega2\mu_1|\hat\nabla u_1|^2+\lambda_1|\nabla\cdot u_1|^2dx- \omega^2\int_\Omega\rho_1|u_1|^2dx\\
&\qquad+\int_\Omega2\mu_1|\hat\nabla u_2|^2+\lambda_1|\nabla\cdot u_2|^2dx- \omega^2\int_\Omega\rho_1|u_2|^2dx\\
&\qquad-2\left(\int_\Omega2\mu_1(\hat\nabla u_1):(\hat\nabla u_2)+\lambda_1(\nabla\cdot u_1)(\nabla\cdot u_2)dx- \omega^2\int_\Omega\rho_1u_1\cdot u_2dx\right)\\
&=\quad\left( \Lambda_1 g, g \right )_{L^2(\Gamma_{\textup{N}})^3}-\left(\Lambda_2 g, g \right )_{L^2(\Gamma_{\textup{N}})^3}\\
&\qquad+\int_\Omega2(\mu_1-\mu_2)|\hat\nabla u_2|^2+(\lambda_1-\lambda_2)|\nabla\cdot u_2|^2dx- \omega^2\int_\Omega(\rho_1-\rho_2)|u_2|^2dx
\end{align*}
Hence,
\begin{align*}
0&\leq\int_\Omega2\mu_1|\hat\nabla (u_1-u_2)|^2+\lambda_1|\nabla\cdot (u_1-u_2)|^2dx\\
&=\left ( \Lambda_1 g, g \right )_{L^2(\Gamma_{\textup{N}})^3} -\left(\Lambda_2 g, g \right )_{L^2(\Gamma_{\textup{N}})^3}\\
&\qquad+\int_\Omega2(\mu_1-\mu_2)|\hat\nabla u_2|^2+(\lambda_1-\lambda_2)|\nabla\cdot u_2|^2dx- \omega^2\int_\Omega(\rho_1-\rho_2)|u_2|^2dx\\
&\qquad+ \omega^2\int_\Omega\rho_1|u_1-u_2|^2dx,
\end{align*}
so that
\begin{align*}
\left(\Lambda_2 g, g \right )_{L^2(\Gamma_{\textup{N}})^3} -\left( \Lambda_1 g, g \right )_{L^2(\Gamma_{\textup{N}})^3}
&\leq\int_\Omega2(\mu_1-\mu_2)|\hat\nabla u_2|^2+(\lambda_1-\lambda_2)|\nabla\cdot u_2|^2dx\\
&\qquad+ \omega^2\int_\Omega(\rho_2-\rho_1)|u_2|^2dx+ \omega^2\int_\Omega\rho_1|u_1-u_2|^2dx.
\end{align*}
\end{proof}

\begin{lem} \label{lem_w_source}
Let $h=(h_\lambda,h_\mu,h_\rho) \in L^\infty(\Omega)^3$, and 
let $\mu_1=\mu_0+h_\mu$, $\lambda_1=\lambda_0+h_\lambda$ and $\rho_1=\rho_0+h_\rho$. 
Furthermore let $u_1\in H^1(\Omega)^3$ and $u_0 \in H^1(\Omega)^3$ be the solution of the
boundary value problem \eqref{eq_bvp1} for $(\rho_1,\lambda_1,\mu_1)$ and
$(\rho_0,\lambda_0,\mu_0)$, respectively.
The difference $u_1-u_0$  is a weak solution to 
\begin {align}\label{L_1}
\begin{cases}
L_{\lambda_1 , \mu_1 ,\rho_1 } (u_1-u_0) &=-(\nabla\cdot(2h_\mu\hat\nabla u_0+h_\lambda\nabla\cdot u_0)-\omega^2 h_\rho u_0), \\
\gamma_{\C_1} (u_1 - u_0)|_{\Gamma_N} &= \gamma_{\C_{h_\lambda,h_\mu}} u_0|_{\Gamma_N},\\ 
(u_1 - u_0)|_{\Gamma_D} &= 0,
\end{cases}
\end{align}
where $\C_{h_\lambda,h_\mu}$ is given by \eqref{eq_Cdef}. %where we set $\lambda = h_\lambda$ and $\mu = h_\mu$.
\end{lem}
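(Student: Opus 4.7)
The plan is to obtain \eqref{L_1} by subtracting the two boundary value problems satisfied by $u_0$ and $u_1$, and to argue everything in the weak sense so we do not need any regularity beyond $u_0, u_1 \in H^1(\Omega)^3$. The first step is to write down the variational formulation \eqref{eq_weak2} for $u_j$ with coefficients $(\lambda_j, \mu_j, \rho_j)$ and common boundary load $g$: for every $v \in H^1(\Omega)^3$ with $v|_{\Gamma_D}=0$,
\begin{equation*}
\int_\Omega 2\mu_j \hat\nabla u_j : \hat\nabla v + \lambda_j (\nabla\cdot u_j)(\nabla\cdot v) - \omega^2 \rho_j u_j\cdot v\,dx = \int_{\Gamma_N} g\cdot v\,dS,
\qquad j=0,1.
\end{equation*}
Subtracting the $j=0$ identity from the $j=1$ identity and rearranging all $h_\mu, h_\lambda, h_\rho$ terms to the right produces
\begin{align*}
\int_\Omega 2\mu_1 \hat\nabla w : \hat\nabla v &+ \lambda_1 (\nabla\cdot w)(\nabla\cdot v) - \omega^2 \rho_1 w\cdot v\,dx \\
&= -\int_\Omega 2h_\mu \hat\nabla u_0:\hat\nabla v + h_\lambda (\nabla\cdot u_0)(\nabla\cdot v) - \omega^2 h_\rho u_0\cdot v\,dx,
\end{align*}
where $w := u_1 - u_0$. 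This is exactly the weak form of an inhomogeneous Navier problem for $w$ with coefficients $(\lambda_1,\mu_1,\rho_1)$.

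Next I would read off the strong form and the two boundary conditions from this variational identity. Both $u_0$ and $u_1$ vanish on $\Gamma_D$, so trivially $w|_{\Gamma_D} = 0$. For the interior equation, testing the identity against $v \in C_c^\infty(\Omega)^3$ and integrating by parts on the right-hand side yields
\begin{equation*}
L_{\lambda_1,\mu_1,\rho_1} w = -\bigl(\nabla\cdot(2h_\mu \hat\nabla u_0 + h_\lambda (\nabla\cdot u_0) I) - \omega^2 h_\rho u_0\bigr)
\end{equation*}
in the sense of distributions, which is the first equation of \eqref{L_1}. Note that since the right-hand side pairs only with $\hat\nabla v$ and $\nabla\cdot v$, the computation is the same as that of $L_{\lambda_1,\mu_1,\rho_1} u_0 - L_{\lambda_0,\mu_0,\rho_0} u_0$ applied formally to the difference of the tensors, and this is a purely linear algebra manipulation using the definition \eqref{eq_Cdef} of $\C_{h_\lambda,h_\mu}$.

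For the Neumann condition, I would test against a general $v \in H^1(\Omega)^3$ with $v|_{\Gamma_D}=0$ and compare with the definition of $\gamma_{\C_1} w$ as the element of $L^2(\Gamma_N)^3$ satisfying $-(\gamma_{\C_1} w, v|_{\Gamma_N})_{L^2(\Gamma_N)^3} = B_{\lambda_1,\mu_1,\rho_1}(w,v)$. Substituting the variational identity above and identifying the boundary pairing gives the conormal trace of $u_0$ with respect to the contrast tensor $\C_{h_\lambda,h_\mu}$, producing the stated boundary value $\gamma_{\C_{h_\lambda,h_\mu}} u_0|_{\Gamma_N}$. The main thing to be careful about is that the strong-form statement \eqref{L_1} is really a shorthand for the variational identity; since the coefficients $h_\mu, h_\lambda, h_\rho$ are only $L^\infty$, the divergence on the right-hand side has to be understood in the distributional sense, and the Neumann trace on $\Gamma_N$ has to be interpreted through the duality relation recorded in Section \ref{sec_defs}. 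Once this is kept straight, the proof is a direct subtraction argument and presents no genuine obstacle.
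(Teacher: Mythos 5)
Your proof is correct and follows essentially the same route as the paper: subtract the two weak formulations of \eqref{eq_bvp1} for $(\lambda_1,\mu_1,\rho_1)$ and $(\lambda_0,\mu_0,\rho_0)$, move the $h$-terms to the right-hand side, and recognize the resulting identity as the weak formulation \eqref{eq_weak3} of the source problem \eqref{eq_bvp2}; your added care about the distributional meaning of the divergence and the duality interpretation of the Neumann trace is exactly the intended reading. (One bookkeeping remark: the variational identity you correctly derive actually yields $L_{\lambda_1,\mu_1,\rho_1}(u_1-u_0)=-\bigl(\nabla\cdot(2h_\mu\hat\nabla u_0+h_\lambda(\nabla\cdot u_0)I)+\omega^2 h_\rho u_0\bigr)$ and $\gamma_{\C_1}(u_1-u_0)|_{\Gamma_N}=-\gamma_{\C_{h_\lambda,h_\mu}}u_0|_{\Gamma_N}$, so the signs of the zero-order term and of the Neumann datum differ from those printed in \eqref{L_1} — but this discrepancy is already present in the paper's own statement and proof, and is immaterial for the norm estimates in Lemmas \ref{lem_ab_2} and \ref{lem_ab_3} that rely on this lemma.)
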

\noindent
\begin{proof} 
The $u_j$ are weak solutions to 
\begin{align*}
\nabla\cdot(\C_j\hat\nabla u_j)+ \omega^2\rho_ju_j&=0, \\
(\C_j\hat\nabla u_j)\nu|_{\Gamma_N} &=g,
\end{align*}
where  $j=0,1$ means that
$$
B_j(u_j,\varphi) = -\int_{\Gamma_N} g \cdot \varphi \,dS, 
$$
where $\varphi$ is  a test function. Observe that in general for a $v \in H^1(\Omega)^3$, 
$$
B_1(v , \varphi) = 
B_0(v, \varphi) + 
\int_\Omega 2 h_\mu \hat \nabla v :\hat \nabla \varphi + h_\lambda \nabla \cdot v \nabla \cdot \varphi - \omega^2h_\rho v\cdot \varphi \,dx. 
$$
Using the weak formulations of the above problems and this we obtain that
\begin{align*}
B_1(u_1 - u_0, \varphi)  =   
-\int_\Omega 2 h_\mu \hat \nabla u_0 :\hat \nabla \varphi + h_\lambda \nabla \cdot u_0 \nabla \cdot \varphi 
- \omega^2h_\rho u_0\cdot \varphi \,dx.
\end{align*}
This is an instance of the weak
formulation \eqref{eq_weak3} of \eqref{eq_bvp2}, which proves the claim.
\end{proof}

\begin{lem}\label{lem_ab_2}
Let $u_1\in H^1(\Omega)^3$ and $u_0 \in H^1(\Omega)^3$ be as in Lemma \ref{lem_w_source}.  Then
\begin{align*}
\|u_1-u_0\|_{H^1(\Omega)^3}\leq\mathcal{O}(\|h\|_{\infty})
\end{align*}
\noindent
with $h=(h_{\rho},h_{\lambda},h_{\mu})$.
\end{lem}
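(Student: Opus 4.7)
The plan is to view the difference $w := u_1 - u_0$ as the unique weak solution of an inhomogeneous boundary value problem with operator $L_{\lambda_1,\mu_1,\rho_1}$ and data depending linearly on $h$, and then estimate $w$ in $H^1$ by applying the elliptic source estimate of Proposition \ref{prop_elipEst_1} from the appendix. By Lemma \ref{lem_w_source}, $w$ solves \eqref{L_1} with interior source
$$
F := -\nabla\cdot(2 h_\mu \hat\nabla u_0 + h_\lambda (\nabla\cdot u_0) I) + \omega^2 h_\rho u_0
$$
and Neumann traction $g_N := \gamma_{\C_{h_\lambda, h_\mu}} u_0$ on $\Gamma_N$, together with a homogeneous Dirichlet condition on $\Gamma_D$.

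First I would control the data in the appropriate dual norms. Pairing $F$ against a test field $\varphi \in H^1(\Omega)^3$ with $\varphi|_{\Gamma_D}=0$ and integrating by parts gives
$$
|\langle F,\varphi\rangle| \leq C\|h\|_\infty \|u_0\|_{H^1(\Omega)^3}\|\varphi\|_{H^1(\Omega)^3},
$$
so $F$ defines a functional of norm $\mathcal{O}(\|h\|_\infty\|u_0\|_{H^1})$ on this space. The trace theorem and the definition of $\C_{h_\lambda,h_\mu}$ give an analogous bound $\|g_N\|_{H^{-1/2}(\Gamma_N)^3} \leq C\|h\|_\infty\|u_0\|_{H^1}$. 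Since $u_0$ solves \eqref{eq_bvp1} for the fixed background parameters with a fixed boundary datum $g$, the factor $\|u_0\|_{H^1(\Omega)^3}$ is an $h$-independent constant.

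Next I would apply Proposition \ref{prop_elipEst_1} to the source problem \eqref{L_1}, obtaining
$$
\|w\|_{H^1(\Omega)^3} \leq C\bigl(\|F\|_{\ast} + \|g_N\|_{H^{-1/2}(\Gamma_N)^3}\bigr),
$$
where $\|\cdot\|_\ast$ denotes the relevant dual norm on $F$. Combining this with the data bounds produces $\|w\|_{H^1(\Omega)^3} \leq C\|h\|_\infty$, which is the asserted $\mathcal{O}(\|h\|_\infty)$ bound.

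The main obstacle is verifying that the constant $C$ in the elliptic estimate can be chosen uniformly in $h$ for $\|h\|_\infty$ sufficiently small. This amounts to checking that the non-resonance hypothesis on $\omega$ is stable under small $L^\infty$ perturbations of the Lamé parameters and the density, which follows from Fredholm theory applied to the bilinear form $B$ and the continuity of the spectrum under such perturbations. Since the claim is an asymptotic bound as $\|h\|_\infty \to 0$, restricting to a small $L^\infty$-neighbourhood of $(\lambda_0,\mu_0,\rho_0)$ is sufficient.
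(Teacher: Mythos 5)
Your proof follows essentially the same route as the paper: identify $u_1-u_0$ via Lemma \ref{lem_w_source} as the solution of a source problem whose data are linear in $h$ (hence of size $\mathcal{O}(\|h\|_\infty)\|u_0\|_{H^1(\Omega)^3}$), and then invoke the a priori estimate of Proposition \ref{prop_elipEst_1}; the paper simply packages the interior source and the traction together as the $L^2$ pair $(F,A)$ in \eqref{eq_bvp2} rather than estimating them in dual norms separately. Your closing remark on the uniformity of the elliptic constant under small perturbations of the coefficients is a point the paper leaves implicit, and addressing it is a welcome refinement rather than a deviation.
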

\begin{proof}
By interpreting the left hand side of \eqref{L_1} in a weak sense we see that $u_1-u_0$ solves \eqref{eq_bvp2}, with
$$
\int_\Omega F \cdot v \,dx - \int_{\Omega}  A : \hat \nabla v \,dx
=
\int_{\Omega}2 h_\mu \hat{\nabla} u_0:\hat{\nabla}v + h_\lambda \nabla\cdot u_0 \nabla\cdot v - \omega^2 h_\rho u_0 v\,dx.
$$
for $v \in \mathcal{V}$. Moreover by the Cauchy-Schwarz inequality we have the estimate
\begin{small}
\begin{align*}
\| F \|_{L^2( \Omega)^{3}}+\| A \|_{L^2( \Omega)^{3 \times 3}}
&\leq C(
\| h_\mu \|_{ L^\infty}  \| \hat \nabla v \|_{L^2(D_2)^{3 \times 3}}
+
\| h_\lambda \|_{ L^\infty}  \| \nabla \cdot v \|_{L^2(D_2)}
+
\| h_\rho \|_{ L^\infty}  \|  v \|_{L^2(D_2)^{3}}) \\
&\leq
\mathcal{O}( \| h \|_{L^\infty}).
\end{align*}
\end{small}
The claim follows now from estimate \eqref{eq_aprioriEst2} and the assumption that $\omega$ is not a resonance frequency.
\end{proof}

\begin{lem}\label{lem_ab_3}
Let $u_1\in H^1(\Omega)^3$ and $u_0 \in H^1(\Omega)^3$ be as in Lemma \ref{lem_w_source}. 
% For $\|h\|_\infty\rightarrow0$ we have
We have that
\begin{align*}
\|u_1+u_0\|_{L^2(\Omega)^3} &\leq 2\|u_0\|_{L^2(\Omega)^3} +  \mathcal{O}( \| h \|_{L^\infty}). %C_1.
\end{align*}
\noindent
\end{lem}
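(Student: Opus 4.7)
\medskip
\noindent
\textbf{Proof plan.} The statement is essentially immediate from the triangle inequality combined with Lemma \ref{lem_ab_2}. The plan is to rewrite the sum as
\[
u_1 + u_0 = 2 u_0 + (u_1 - u_0),
\]
so that by the triangle inequality in $L^2(\Omega)^3$ we get
\[
\|u_1 + u_0\|_{L^2(\Omega)^3} \leq 2\|u_0\|_{L^2(\Omega)^3} + \|u_1 - u_0\|_{L^2(\Omega)^3}.
\]

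\medskip
\noindent
To finish, I would bound the remaining term $\|u_1 - u_0\|_{L^2(\Omega)^3}$ using the continuous embedding $H^1(\Omega)^3 \hookrightarrow L^2(\Omega)^3$ (with norm $\leq 1$), and then invoke the estimate from Lemma \ref{lem_ab_2}, which gives $\|u_1 - u_0\|_{H^1(\Omega)^3} \leq \mathcal{O}(\|h\|_\infty)$. Combining these yields the desired bound. There is no real obstacle here; the only thing to watch is that the implicit constant in the $\mathcal{O}(\|h\|_\infty)$ term depends on the same data (the fixed background parameters $\lambda_0,\mu_0,\rho_0$, the domain $\Omega$, the frequency $\omega$, and the boundary datum $g$) as in the preceding lemma, so no new hypotheses are introduced.
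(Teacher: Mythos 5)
Your proposal is correct and follows exactly the paper's own argument: the decomposition $u_1+u_0 = 2u_0 + (u_1-u_0)$, the triangle inequality, and the bound $\|u_1-u_0\|_{L^2(\Omega)^3} \leq \|u_1-u_0\|_{H^1(\Omega)^3} = \mathcal{O}(\|h\|_{L^\infty})$ from Lemma \ref{lem_ab_2}. No differences worth noting.
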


\noindent
\begin{proof}
We have
\begin{align*}
\|u_1+u_0\|_{L^2(\Omega)^3}\leq\|u_1-u_0\|_{L^2(\Omega)^3}+2 \|u_0\|_{L^2(\Omega)^3}.
\end{align*}
\noindent
From Lemma \ref{lem_ab_2} it follows that
\begin{align*}
\|u_1-u_0\|_{L^2(\Omega)^3}\leq\|u_1-u_0\|_{H^1(\Omega)^3}\leq  \mathcal{O}(\| h \|_{L^\infty}), %\dfrac{C}{\beta}\|u_0\|_{H^1(\Omega)^3} \|h\|_{\infty}.
\end{align*}
\noindent
which proves the claim.
\end{proof}

\noindent

\begin{lem} \label{lem_Frechet}
Let $u_g$ and $u_f$ be the solution to (\ref{eq_bvp1}) for the boundary loads $g$ and $f$, respectively.
There exists a bounded linear operator $\Lambda^\prime_{\lambda,\mu,\rho}$ such that
\begin{align*} 
\lim_{\| h\|_{\infty} \to 0}
\dfrac{1}{\| h\|_{\infty}} 
\big\|\Lambda_{\lambda+h_\lambda,\mu+h_\mu,\rho+h_\rho}-\Lambda_{\lambda,\mu,\rho}
-\Lambda^\prime_{\lambda,\mu,\rho}[h_\lambda,h_\mu,h_\rho] \big\|_{*}
= 0
\end{align*} 
where $h=(h_\rho,h_\lambda,h_\mu) \in L^\infty(\Omega)^3$, $ \| \cdot \|_{*}$ is the operator norm, and
where the associated bilinear form of the Fréchet derivative is
\begin{align}\label{bilinear_Frechet}
&\left( \Lambda^\prime_{\lambda,\mu,\rho}[h_\lambda,h_\mu,h_\rho] g,f \right )_{L^2(\Gamma_{\textup{N}})^3}
=-\int_{\Omega} 2 h_\mu \hat{\nabla}u_g : \hat{\nabla} u_f + h_\lambda \nabla \cdot u_g \nabla \cdot u_f - \omega^2 h_\rho u_g \cdot u_f \,dx.
\end{align}
\end{lem}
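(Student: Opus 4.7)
My plan is to derive two-sided estimates of the form
\[
|((\Lambda_{\lambda+h_\lambda,\mu+h_\mu,\rho+h_\rho} - \Lambda_{\lambda,\mu,\rho} - \Lambda'_{\lambda,\mu,\rho}[h])g,g)_{L^2(\Gamma_{\textup{N}})^3}| \leq C\|h\|_\infty^2\|g\|_{L^2(\Gamma_{\textup{N}})^3}^2,
\]
and then upgrade them to the operator norm via self-adjointness. Abbreviate the two ND-maps as $\Lambda_0$ and $\Lambda_1$, denote the candidate derivative simply by $\Lambda'[h]$, and for a fixed load $g$ let $u_0$ and $u_1$ be the corresponding base and perturbed solutions. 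Both $\Lambda_1-\Lambda_0$ and $\Lambda'[h]$ are self-adjoint (the first by symmetry of $B$, the second by direct inspection of \eqref{bilinear_Frechet}, since $g \mapsto u_g$ is linear), so their operator norm difference equals the supremum of the diagonal quadratic form, reducing the task to the scalar inequality above. Boundedness of $\Lambda'[h]$ itself is immediate from Cauchy--Schwarz on \eqref{bilinear_Frechet} together with the well-posedness bound $\|u_g\|_{H^1} \leq C\|g\|_{L^2(\Gamma_{\textup{N}})^3}$.

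For the lower bound I apply Lemma \ref{lem_ab_1} with the \emph{perturbed} parameters in the role of subscript $1$ and the \emph{base} parameters in the role of subscript $2$. Since $\mu_1-\mu_2 = h_\mu$, $\lambda_1-\lambda_2 = h_\lambda$, $\rho_2-\rho_1 = -h_\rho$, and $u_2=u_0$, the resulting right-hand side is exactly $-(\Lambda'[h]g,g) + \omega^2\int_\Omega(\rho+h_\rho)|u_1-u_0|^2\,dx$. Lemma \ref{lem_ab_2} together with well-posedness gives $\|u_1-u_0\|_{H^1} \leq C\|h\|_\infty\|g\|$, so this remainder is $O(\|h\|_\infty^2)\|g\|^2$. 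Rearranging yields the lower bound $((\Lambda_1 - \Lambda_0)g,g) \geq (\Lambda'[h]g,g) - O(\|h\|_\infty^2)\|g\|^2$.

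For the matching upper bound I apply Lemma \ref{lem_ab_1} with the roles of base and perturbed parameters reversed. The resulting right-hand side has the same structural shape but involves $u_1$ in place of $u_0$. The key technical step is then to replace $u_1$ by $u_0$ inside each integrand, and for this I use the identity $|a|^2 - |b|^2 = (a+b):(a-b)$, applied to each of $\hat\nabla u$, $\nabla\cdot u$, and $u$. Cauchy--Schwarz bounds the resulting error by $C\|h\|_\infty\,\|u_1+u_0\|_{H^1}\|u_1-u_0\|_{H^1}$. Lemma \ref{lem_ab_2} controls the last factor by $O(\|h\|_\infty)\|g\|$, while an $H^1$-analogue of Lemma \ref{lem_ab_3} obtained from the triangle inequality $\|u_1+u_0\|_{H^1} \leq 2\|u_0\|_{H^1} + \|u_1-u_0\|_{H^1}$ and Lemma \ref{lem_ab_2} controls the middle factor by $O(\|g\|)$. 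The net error is $O(\|h\|_\infty^2)\|g\|^2$, yielding $((\Lambda_1-\Lambda_0)g,g) \leq (\Lambda'[h]g,g) + O(\|h\|_\infty^2)\|g\|^2$. Combining with the previous paragraph and invoking self-adjointness (for a self-adjoint $T$ on a Hilbert space, $\|T\|_{*} = \sup_{\|g\|=1}|(Tg,g)|$) gives $\|\Lambda_1 - \Lambda_0 - \Lambda'[h]\|_{*} = O(\|h\|_\infty^2) = o(\|h\|_\infty)$, which is the claim.

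The hard part will be the $u_1\to u_0$ conversion in the upper bound: it succeeds precisely because both the coefficient prefactor $h$ and the solution deviation $u_1-u_0$ are first order in $\|h\|_\infty$, whose product yields the required quadratic error. A secondary point I would record at the outset is that for $\|h\|_\infty$ sufficiently small the perturbed problem \eqref{eq_bvp1} remains non-resonant (by standard perturbation of the resonance spectrum), so $\Lambda_1$, $u_1$, and all of the above estimates are defined in the first place.
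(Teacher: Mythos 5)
Your proposal is correct and follows essentially the same route as the paper's proof: reduce the operator norm to the diagonal quadratic form via symmetry/self-adjointness, obtain the lower bound by applying Lemma \ref{lem_ab_1} with the perturbed parameters as index $1$ (so the right-hand side is exactly $-(\Lambda'[h]g,g)$ plus an $O(\|h\|_\infty^2)$ remainder), obtain the upper bound by swapping the roles and converting $u_1$ to $u_0$ through the polarization identity $|a|^2-|b|^2=(a+b):(a-b)$ combined with Lemmas \ref{lem_ab_2} and \ref{lem_ab_3}, and get boundedness of $\Lambda'[h]$ by Cauchy--Schwarz on \eqref{bilinear_Frechet}. Your explicit remark that the perturbed problem stays non-resonant for small $\|h\|_\infty$ is a point the paper leaves implicit, but otherwise the arguments coincide.
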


\begin{proof}
By symmetric bilinearity estimates, we obtain 
\begin{align*}
&\|\Lambda_{\lambda+h_\lambda,\mu+h_\mu,\rho+h_\rho}-\Lambda_{\lambda,\mu,\rho}
-\Lambda^\prime_{\lambda,\mu,\rho}[h_\lambda,h_\mu,h_\rho]\|_{*}\\
% &=\sup_{\|g\|_{L^2(\Gamma_\textup{N})^3}=\|f\|_{L^2(\Gamma_\textup{N})^3}=1}
&=\sup_{\|g\|_{L^2}=\|f\|_{L^2}=1}
\vert \left( (\Lambda_{\lambda+h_\lambda,\mu+h_\mu,\rho+h_\rho}
-\Lambda_{\lambda,\mu,\rho}-\Lambda^\prime_{\lambda,\mu,\rho}[h_\lambda,h_\mu,h_\rho])g,f \right )_{L^2(\Gamma_{\textup{N}})^3}\vert\\
& \leq
% C_1 \sup_{\|g\|_{L^2(\Gamma_\textup{N})^3}=1}
C \sup_{\|g\|_{L^2}=1}
\vert \underbrace{\left( (\Lambda_{\lambda+h_\lambda,\mu+h_\mu,\rho+h_\rho}
-\Lambda_{\lambda,\mu,\rho}-\Lambda^\prime_{\lambda,\mu,\rho}[h_\lambda,h_\mu,h_\rho])g,g \right )_{L^2(\Gamma_{\textup{N}})^3}}_{:=F}\vert
\end{align*}
and find with Lemma \ref{lem_ab_1} and Lemma \ref{lem_ab_2} for 
\begin{align*}
(\lambda_1, \mu_1, \rho_1) := (\lambda+h_\lambda, \mu+h_\mu,\rho+h_\rho),
\quad \text{and} \quad
(\lambda_2,\mu_2,\rho_2) := (\lambda, \mu,\rho) 
\end{align*}
where $u$ is the solution for the Lam\'{e} parameters $(\lambda, \mu,\rho )$ and $u_1$ the solution for the Lam\'{e}
parameters $(\lambda+h_\lambda, \mu+h_\mu,\rho+h_\rho)$ that
\begin{align*}
F\geq &\int_\Omega2(\mu-\mu-h_\mu)|\hat\nabla
u|^2+(\lambda-\lambda-h_\lambda)|\nabla\cdot u|^2dx+ \omega^2
\int_\Omega(\rho+h_\rho-\rho)|u|^2dx\\
&- \omega^2\int_\Omega(\rho+h_\rho)|u_1-u|^2dx-\left(-\int_{\Omega} 2 h_\mu
|\hat{\nabla}u|^2 + h_\lambda |\nabla \cdot u |^2 - \omega^2 h_\rho |u|^2
\,dx\right)\\
&=-\omega^2 \int_\Omega(\rho+h_\rho)|u_1-u|^2dx\\
&=\mathcal{O}(\|h\|_\infty^2).
\end{align*}
On the other hand, we have with Lemma \ref{lem_ab_1}
\begin{align*}
F& \leq\int_\Omega2(\mu-\mu-h_\mu)|\hat\nabla
u_1|^2+(\lambda-\lambda-h_\lambda)|\nabla\cdot u_1|^2dx+ \omega^2
\int_\Omega(\rho+h_\rho-\rho)|u_1|^2dx\\
&\qquad+ \omega^2 \int_\Omega\rho|u_1-u|^2dx-\left(-\int_{\Omega} 2 h_\mu
|\hat{\nabla}u|^2 + h_\lambda |\nabla \cdot u |^2 - \omega^2 h_\rho |u|^2
\,dx\right)\\
% \displaybreak
&=\int_\Omega2h_\mu(|\hat\nabla u|^2-|\hat\nabla u_1|^2)+h_\lambda(|\nabla\cdot
u|^2-|\nabla\cdot u_1|^2)dx+ \omega^2 \int_\Omega h_\rho(|u_1|^2-|u|^2)dx\\
&\qquad+  \omega^2\int_\Omega\rho|u_1-u|^2dx\\
&=\int_\Omega2h_\mu\hat\nabla (u-u_1):\hat\nabla (u+u_1)+h_\lambda\nabla\cdot (u-u_1)\nabla\cdot (u+u_1)dx\\
&\qquad+\omega^2 \int_\Omega h_\rho(u_1-u)\cdot(u_1+u)dx+\omega^2\int_\Omega\rho|u_1-u|^2dx\\
%&\leq\int_\Omega2h_\mu(|\hat\nabla (u-u_1):\hat\nabla (u+u_1))|+h_\lambda(|\nabla\cdot (u-u_1)\nabla\cdot (u+u_1)|)dx\\
%&\qquad+\omega^2\int_\Omega h_\rho(|(u_1-u)\cdot(u_1+u)|)dx+\omega^2\int_\Omega\rho|u_1-u|^2dx\\
&\leq\int_\Omega2h_\mu|\hat\nabla (u-u_1)|\,\,|\hat\nabla (u+u_1)|+h_\lambda|\nabla\cdot (u-u_1)|\,\,|\nabla\cdot (u+u_1)|dx\\
&\qquad+\omega^2\int_\Omega h_\rho|u_1-u|\,\,|u_1+u|dx+\omega^2\int_\Omega\rho\underbrace{|u_1-u|^2}_{=\mathcal{O}(\|h\|_\infty^2)}dx\\
&\leq C \|h\|_\infty \|u-u_1\|_{H^1(\Omega)^{3}} \|u+u_1\|_{H^1(\Omega)^{3}} +\mathcal{O}(\|h\|_\infty^2)
\end{align*}
Since $u,u_1\in H^1(\Omega)$, $u_1-u\in\mathcal{O}(\|h\|_\infty)$ due to Lemma \ref{lem_ab_2} and \ref{lem_ab_3} and the fact, that we are only interested in the limit $\|h\|_\infty\rightarrow0$, we can estimate
\begin{align*}
\|u_1+u\|_{H^1}&\leq2\|u\|_{H^1}+\mathcal{O}( \| h \|_{L^\infty}),
\end{align*}
and
\begin{align*}
\|\hat{\nabla}(u_1-u) \|_{L^2(\Omega)^{3\times3}}&\leq C\|u_1-u\|_{H^1(\Omega)^3}=\mathcal{O}(\|h\|_\infty)\\
\|\nabla\cdot(u_1-u)  \|_{L^2(\Omega)}&\leq C \|u_1-u\|_{H^1(\Omega)^3}=\mathcal{O}(\|h\|_\infty)
\end{align*}
Hence,
\begin{align*}
F\leq C\|h\|_\infty \|u-u_1\|_{H^1(\Omega)^{3}} \|u+u_1\|_{H^1(\Omega)^{3}} +\mathcal{O}(\|h\|^2_{\infty})
=\mathcal{O}(\|h\|_\infty^2)
\end{align*}
We have to show that the operator is bounded and linear and take a look at the associated bilinear form:
\begin{align*}
\left(\Lambda^\prime_{\lambda,\mu,\rho}[\hat{\lambda},\hat{\mu},\hat{\rho},] g,f \right)_{L^2(\Gamma_{\textup{N}})^3}
= -\int_{\Omega}2\hat{\mu} \hat{\nabla}u_g :\hat{\nabla}u_f +\hat{\lambda}
\nabla\cdot u_g \nabla\cdot u_f - \omega^2 \hat{\rho} u_g \cdot u_f\,dx.
\end{align*}
\noindent
First, we prove the boundedness and consider
\begin{align*}
 \left( \Lambda^\prime_{\lambda,\mu,\rho}[ h_\lambda, h_\mu,h_\rho] g,f \right)_{L^2(\Gamma_{\textup{N}})^3} 
&= -\int_{\Omega}2 h_\mu \hat{\nabla}u_g :\hat{\nabla}u_f +h_\lambda\nabla\cdot u_g \nabla\cdot u_f - \omega^2 h_\rho u_g \cdot u_f\,dx\\
&\leq \beta \|h\|_\infty \|u_g \|_{H^1(\Omega)^3} \|u_f \|_{H^1(\Omega)^3}.
\end{align*}
\noindent
Clearly, we have linearity.
\noindent
All in all, this leads to the desired result.
\end{proof}

\begin{lem} \label{lem_compact_self_adjoint}
The Fréchet derivative $\Lambda_{\lambda,\mu,\rho}'$ is compact and self-adjoint.
\end{lem}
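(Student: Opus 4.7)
The plan has two independent parts: self-adjointness follows directly from symmetry of the bilinear form \eqref{bilinear_Frechet}, while compactness is obtained by factoring $\Lambda'_{\lambda,\mu,\rho}[h_\lambda,h_\mu,h_\rho]$ through the smoother space $H^{1/2}(\Gamma_N)^3$ and invoking the compact Sobolev embedding $H^{1/2}(\Gamma_N)^3 \hookrightarrow L^2(\Gamma_N)^3$.

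Self-adjointness is essentially immediate. The right-hand side of \eqref{bilinear_Frechet} is manifestly invariant under the exchange $g \leftrightarrow f$, since each of the three integrands -- $\hat\nabla u_g : \hat\nabla u_f$, $(\nabla\cdot u_g)(\nabla\cdot u_f)$, and $u_g\cdot u_f$ -- is symmetric in $u_g$ and $u_f$. Hence $(\Lambda'[h]g, f)_{L^2(\Gamma_N)^3} = (\Lambda'[h]f, g)_{L^2(\Gamma_N)^3} = (g, \Lambda'[h]f)_{L^2(\Gamma_N)^3}$ for all $g,f \in L^2(\Gamma_N)^3$, which is self-adjointness.

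For compactness, I would first identify $\Lambda'[h]g$ with the boundary trace $v|_{\Gamma_N}$ of the weak solution $v \in H^1(\Omega)^3$ of the source problem obtained by linearising \eqref{L_1} at the background coefficients:
\begin{align*}
L_{\lambda,\mu,\rho}\, v &= -\nabla\cdot\bigl(2h_\mu \hat\nabla u_g + h_\lambda(\nabla\cdot u_g)I\bigr) + \omega^2 h_\rho u_g \quad\text{in }\Omega,\\
\gamma_{\mathbb{C}} v|_{\Gamma_N} &= \gamma_{\mathbb{C}_{h_\lambda,h_\mu}} u_g|_{\Gamma_N},\qquad v|_{\Gamma_D}=0.
\end{align*}
The identification follows from a reciprocity computation: testing the weak form for $v$ against $u_f$ (which is an admissible test function since $u_f|_{\Gamma_D}=0$) and using the weak form for $u_f$, integration by parts produces the boundary pairing $(v|_{\Gamma_N},f)_{L^2(\Gamma_N)^3}$ on one side and precisely the right-hand side of \eqref{bilinear_Frechet} on the other.

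Once the identification is in place, the elliptic estimates of Proposition \ref{prop_elipEst_1} give $\|v\|_{H^1(\Omega)^3} \leq C\|h\|_\infty \|u_g\|_{H^1(\Omega)^3} \leq C'\|h\|_\infty \|g\|_{L^2(\Gamma_N)^3}$, so by the trace theorem $\Lambda'[h]$ is bounded as a map $L^2(\Gamma_N)^3 \to H^{1/2}(\Gamma_N)^3$. Since $\partial\Omega$ is compact and $C^{1,1}$, the inclusion $H^{1/2}(\Gamma_N)^3 \hookrightarrow L^2(\Gamma_N)^3$ is compact by Rellich, and the composition yields compactness of $\Lambda'[h]$ on $L^2(\Gamma_N)^3$. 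The main technical point is the reciprocity identification $\Lambda'[h]g = v|_{\Gamma_N}$, which requires some care with the mixed Neumann–Dirichlet boundary conditions; the remaining ingredients are the elliptic regularity already established in Section \ref{sec_ellipticEst} and a standard boundary Rellich argument.
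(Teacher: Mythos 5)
Your proposal is correct and follows essentially the same route as the paper: self-adjointness from the symmetry of the bilinear form \eqref{bilinear_Frechet} together with boundedness, and compactness by representing $\Lambda'_{\lambda,\mu,\rho}[h]g$ as the $\Gamma_N$-trace of the $H^1(\Omega)^3$ solution of the linearized source problem (via Proposition \ref{prop_elipEst_1} and the reciprocity identity $B(v_g,u_f)=-\int_{\Gamma_N}v_g\cdot f\,dS$), then invoking the compact embedding $H^{1/2}(\Gamma_N)^3\hookrightarrow L^2(\Gamma_N)^3$. The only cosmetic difference is a sign convention in the identification ($\Lambda'[h]g=-v_g|_{\Gamma_N}$ in the paper), which is immaterial for compactness.
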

\noindent
\begin{proof}
$\Lambda_{\lambda,\mu,\rho}^\prime$ is obviously symmetric since
\begin{align*}
\left( \Lambda_{\lambda,\mu,\rho}^\prime[\hat{\lambda},\hat{\mu},\hat{\rho}]g,f \right)_{L^2(\Gamma_{\textup{N}})^3}
&=-\int_{\Omega}2\hat{\mu}\hat{\nabla}u_g:\hat{\nabla}u_f +\hat{\lambda}(\nabla\cdot u_g)(\nabla \cdot u_f) -\omega^2 \hat{\rho} u_g\cdot u_f\,dx\\
&=\left( g,\Lambda_{\lambda,\mu,\rho}^\prime[\hat{\lambda},\hat{\mu},\hat{\rho}]f \right)_{L^2(\Gamma_{\textup{N}})^3}
\end{align*}
\noindent
due to the symmetry to the bilinear form. Since $\Lambda_{\lambda,\mu,\rho}'$ is bounded it is therefore also self-adjoint. 
\\
\\
Next we prove that $\Lambda_{\lambda,\mu,\rho}'$ is compact. By the formula in Lemma	\ref{lem_Frechet}, we have that
\begin{align*}
( \Lambda^\prime_{\lambda,\mu,\rho}[h_\lambda,h_\mu,h_\rho] g,f  )_{L^2(\Gamma_{\textup{N}})^3}
& =-\int_{\Omega} 2 h_\mu \hat{\nabla}u_g : \hat{\nabla} u_f + h_\lambda \nabla \cdot u_g \nabla \cdot u_f - \omega^2 h_\rho u_g \cdot u_f \,dx\\
&=: -\int_{\Omega} A : \hat \nabla u_f - F \cdot u_f \,dx. 
\end{align*}
Since $\omega$ is a non-resonance frequency, we have a unique weak solution $v_g$ to
\begin{align*}
\begin{cases}
\nabla \cdot (\C\,  \hat \nabla v_g )  + \omega^2\rho v_g  &=  F + \nabla \cdot A, \\
% \sum_{k=1}^3 \p_k F^k \\ %\phi_1F^1 + \nabla \cdot (\phi_2 (\nabla \cdot F^2)I)+\nabla\cdot (\phi_3 \hat \nabla F^3), \\
\;\quad\quad\quad\quad(\gamma_\C v_g ) |_{\Gamma_N} &=  A \nu |_{\Gamma_N} , \\	
 \quad\quad\quad\quad\quad \quad v_g |_{\Gamma_D} &= 0,	
\end{cases}
\end{align*}
by Proposition \ref{prop_elipEst_1}, when $\C$ is given by $\lambda$ and $\mu$.
Thus by \eqref{eq_weak3} we have in particular that
$$
B(v_g,v) = \int_\Omega F \cdot v \,dx - \int_{\Omega}  A : \hat \nabla v \,dx. 
$$
Since $u_f$ is  a solution of \eqref{eq_bvp1}, we have that
$$
( \Lambda^\prime_{\lambda,\mu,\rho}[h_\lambda,h_\mu,h_\rho] g,f  )_{L^2(\Gamma_{\textup{N}})^3}
= B(v_g,u_f)  =  -\int_{\Gamma_N} v_g \cdot f \, dS. 
$$
Hence we have that
$$
\Lambda^\prime_{\lambda,\mu,\rho}[h_\lambda,h_\mu,h_\rho] g = -v_g |_{\Gamma_N}.
$$
Since $v_g \in H^1(\Omega)^3$, $v_g |_{\Gamma_N} \in H^{1/2}(\Gamma_N)^3$.
Compactness now follows from the compactness of the inclusion $H^{1/2}(\Gamma_N)^3 \hookrightarrow L^2(\Gamma_N)^3$.
\end{proof}

\section{Localization of solutions} \label{sec_loc1}

\noindent
To derive the monotonicity based shape reconstruction procedure one uses
localized solutions.
In this section we briefly review the results on localizing solutions for the Navier equation
that were obtained in \cite{EP24}. In the next section we will consider localization
in the case of constant material parameters.

\medskip
\noindent
Let $D_1,D_2 \subset \Omega$.
We will localize a solution so that it is small in $D_1 \subset \Omega$
and large in  $D_2 \subset \Omega$. We will assume that $\p D_1$ is Lipschitz and that $\p D_2$ is smooth, and moreover that
\begin{align}  \label{eq_Dassump}
D_1 \cap D_2 = \emptyset, \qquad \Omega \setminus (D_1 \cup D_2) \text{ is connected }, \qquad
\overline{\Omega \setminus (D_1 \cup D_2)}\cap\Gamma_N \neq \emptyset.
\end{align}

\noindent In \cite{EP24} we proved the following localization result for 
solutions that are guaranteed to have non-zero divergence in $D_2$.

\begin{prop} \label{prop_loc2}
Assume that $D_1,D_2 \subset \Omega$ are  as in \eqref{eq_Dassump}, and that $D'_i \Subset D_i$, $i=1,2$
are open and non-empty.
Let $V\subset L^2(\Gamma_N)^3$ be a subspace with $\dim (V)<\infty$,
then there exists a sequence $g_j \in L^2(\Gamma_N)^3$, such that $ g_j \perp V$ in the $L^2(\Gamma_N)^3$-norm,
and for which
$$
\| u_j \|_{ L^2(D_1)^3 }, 
\;\| \hat \nabla u_j \|_{ L^2(D'_1)^{3\times 3} }, 
\;\| \nabla \cdot u_j \|_{ L^2(D'_1)} 
\to 0, 
$$
and for which
$$
\| u_j \|_{ L^2(D_2)^3 }, 
\;\| \hat \nabla u_j \|_{ L^2(D'_2)^{3\times 3} }, 
\;\| \nabla \cdot u_j \|_{ L^2(D'_2)} 
\to \infty, 
$$
as $j \to \infty$, and where $u_j$ solves \eqref{eq_bvp1}, with the boundary conditions $g_j$.
\end{prop}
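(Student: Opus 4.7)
I would follow the classical duality approach to localization due to Gebauer \cite{Ge08} and used in \cite{HU13}. Introduce the product Hilbert spaces $Y_i := L^2(D_i)^3 \times L^2(D_i')^{3\times 3} \times L^2(D_i')$ and the bounded linear operators
\begin{equation*}
L_i : V^\perp \longrightarrow Y_i, \qquad L_i g := \bigl(u_g|_{D_i},\; \hat\nabla u_g|_{D_i'},\; \nabla\cdot u_g|_{D_i'}\bigr), \qquad i=1,2,
\end{equation*}
where $u_g \in H^1(\Omega)^3$ solves \eqref{eq_bvp1} with boundary datum $g \in V^\perp$. The abstract functional-analytic lemma of Gebauer then says that the existence of a sequence $(g_j)\subset V^\perp$ with $\|L_1 g_j\|_{Y_1}\to 0$ and $\|L_2 g_j\|_{Y_2}\to\infty$ is equivalent to the non-inclusion $\Ran(L_2^*)\not\subset\Ran(L_1^*)$, so the proof reduces to verifying this.

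I would identify the adjoints by integration by parts. For $(\phi,\Phi,\psi) \in Y_i$, extended by zero outside $D_i$ respectively $D_i'$, let $w \in H^1(\Omega)^3$ be the weak solution of the source problem
\begin{equation*}
\nabla\cdot(\C\,\hat\nabla w) + \omega^2\rho w = \phi - \nabla\cdot(\Phi + \psi\, I) \quad\text{in } \Omega,
\end{equation*}
with $\gamma_\C w|_{\Gamma_N}=0$ and $w|_{\Gamma_D}=0$; existence and uniqueness follow since $\omega$ is not a resonance. A standard pairing argument combined with the symmetry of the bilinear form $B$ yields $L_i^*(\phi,\Phi,\psi) = P_{V^\perp}(w|_{\Gamma_N})$, where $P_{V^\perp}$ is the orthogonal projection onto $V^\perp$. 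Assume for contradiction that $\Ran(L_2^*)\subset\Ran(L_1^*)$. Then any $D_2\cup D_2'$-supported source has a matching $D_1\cup D_1'$-supported source whose adjoint solution $w_1$ has the same trace as $w_2$ on $\Gamma_N$ modulo $V$. After subtracting a finite-dimensional correction to absorb the $V$-ambiguity, the difference $w_2-w_1$ solves the homogeneous Navier equation on the connected open set $\Omega_0 := \Omega\setminus(\overline{D_1}\cup\overline{D_2})$ and has vanishing Cauchy data on the non-empty open piece $\Gamma_N\cap\overline{\Omega_0}$. Unique continuation from this piece then forces $w_2 \equiv w_1$ throughout $\Omega_0$, and one obtains a contradiction by choosing a $D_2'$-source whose trace on $\partial D_2 \cap \Omega_0$ cannot be produced from the $D_1$-side.

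The main obstacle is the unique-continuation step, which must be performed for the isotropic Navier operator with $L^\infty_+$ Lamé parameters and density, starting from an open piece of the boundary. Under the $C^{1,1}$ regularity of $\partial\Omega$ this is available via Carleman estimates applied to the decoupled principal symbol of the Navier system. The finite-codimensional restriction $g\perp V$ only contributes a finite-dimensional correction, which is absorbed by enlarging $V$ if necessary, and the zero-order term $\omega^2\rho w$ in the adjoint problem is harmless since UCP is stable under such perturbations. Verifying that the resulting source-matching really yields a contradiction — i.e.\ that sources in $D_2'$ generate boundary traces that genuinely cannot be replicated from $D_1'$ — is the most delicate point and in practice is arranged by producing, via Proposition \ref{prop_elipEst_1}, sufficiently many independent source configurations.
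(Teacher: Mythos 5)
The paper itself does not prove this proposition (it is imported from \cite{EP24}), but the route taken there, and reproduced in this paper for the constant-coefficient analogue (Proposition \ref{prop_loc_div0}), is structurally different from yours: one first constructs an explicit candidate $w$ solving the equation on $D_1\cup D_2$ with $w\equiv 0$ in $D_1$ and with nonvanishing $w$, $\hat\nabla w$ and $\nabla\cdot w$ in $D_2$, then proves a Runge approximation property ($w$ is an $L^2(D_1\cup D_2)$-limit of restrictions of global solutions whose boundary data lie in the admissible finite-codimensional set), and finally rescales, $u_j:=j\,w_j$; the assertions about $\hat\nabla u_j$ and $\nabla\cdot u_j$ on $D_i'\Subset D_i$ then follow from interior elliptic estimates applied to $w_j\to w$. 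Your Gebauer-type duality scheme \cite{Ge08} is a legitimate alternative in principle, but as written it has several genuine gaps.

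First, blow-up of the product norm $\|L_2 g_j\|_{Y_2}\to\infty$ does not imply blow-up of each of the three components separately, which is what the proposition asserts; you would need to run the abstract lemma with, say, $L_2 g:=\nabla\cdot u_g|_{D_2'}$ alone and then recover the other two limits from $|\nabla\cdot u|\le\sqrt{3}\,|\hat\nabla u|$ together with a Caccioppoli inequality. Second, $w_2-w_1$ does not have vanishing Cauchy data on $\Gamma_N\cap\overline{\Omega_0}$: its conormal derivative vanishes but its Dirichlet trace only lies in the finite-dimensional space $V$, and ``subtracting a finite-dimensional correction'' is not a defined operation here --- one needs an actual rank/codimension argument showing that range non-inclusion survives the restriction to $V^\perp$. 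Third, and most importantly, the step you yourself flag as delicate --- exhibiting a $D_2'$-supported source whose projected trace cannot be matched from the $D_1$ side --- is precisely the analytic content of the proposition and is not carried out; this is exactly the work that the candidate-function construction performs in the paper's approach (cf.\ Lemma \ref{lem_blowupCand_div0} in the constant-coefficient case). Finally, unique continuation from Cauchy data for the isotropic Navier system is not available for merely $L^\infty_+$ Lam\'e parameters: the standard decoupling of the system into second-order elliptic equations requires differentiating $\mu$, so the appeal to ``Carleman estimates applied to the decoupled principal symbol'' is unsupported at this generality, whereas the paper only ever applies unique continuation to constant-coefficient operators (cf.\ the use of \cite{HLL18}).
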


\section{Localization  of solutions in the case of constant material parameters} \label{sec_loc2}

\noindent
For constant $\lambda_0, \mu_0$ and $\rho_0$ we can in certain respects 
improve on the localisation procedure in \cite{EP24}, which we reviewed in section \ref{prop_loc2}.
Here we show that we can obtain localized solutions with zero divergence.
This is  related to the fact that elastic waves in a constant background medium and in free space
decouple into a pure pressure wave part and a pure separate shear wave part
% \textcolor{blue}{when considered in free space (note that this is not true in general
% when one adds boundaries)}. 
using the Helmholtz decomposition of a vector field.
See e.g. p.101-103 in \cite{LL81}. This type of decoupling does not occur in general
and it is thus important that the material parameters are constant.
\\
\\
The Neumann problem of the  Navier equation \eqref{eq_bvp1}, with $\Gamma_D = \emptyset$,
reduces in the constant coefficient case to
\begin{equation} \label{eq_constNavier}
\begin{aligned}
L_{0} u := \mu_0 \Delta u + (\lambda_0 + \mu_0) \nabla \nabla \cdot u + \omega^2 \rho_0 u &= 0, \quad \text{ in } \Omega, \\
\gamma_{\C_0 }u|_{\p \Omega} &= g.
\end{aligned}
\end{equation}
We will construct a divergence free solution to \eqref{eq_constNavier} 
by constructing a solution $u_s$ to the simpler vector valued Helmholtz  problem
\begin{equation} \label{eq_vecHelmholtz}
\begin{aligned}
\tilde L_0 u_s := \Delta u_s + \omega^2 \frac{\rho_0  }{ \mu_0 }u_s &= 0, \quad \text{ in } \Omega, \\
\nu \times u_s|_{\p \Omega} &= f,
\end{aligned}
\end{equation}
where $f$ is tangential, and $u_s \in H^1(\Omega)^3$.
Note that this solution also solves 
\begin{equation*} %\label{eq_constNavier}
\begin{aligned}
L_0 u &= 0, \quad \text{ in } \Omega, \\
\gamma_{\C_0 }u|_{\p \Omega} &= \gamma_{\C_0 }u_s|_{\p \Omega},
\end{aligned}
\end{equation*}
since $ \nabla \cdot u_s = 0$. 
It is worth to note that the well-posedness of \eqref{eq_constNavier} is guaranteed 
when either a Neumann or  Dirichlet condition is specified. 
In the case of a Dirichlet condition this means specifying $u|_{\p \Omega}$.
In the case of the vector Helmholtz
equation  in \eqref{eq_vecHelmholtz} one only needs a Dirichlet type condition that specifies 
the tangential part of the vector field, i.e. $\nu \times u_s$, on the boundary.
The solutions to the vector Helmholtz equation form thus a proper subset of solutions to \eqref{eq_constNavier}
with zero divergence.

\subsection{Solutions with zero divergence and finite constraints} \label{sec_zero_div}

Here we prove the existence of divergence free solutions $u$ to
\begin{equation} \label{eq_curlBVP}
\begin{aligned}
\Delta u_s + \omega^2 \frac{\rho_0}{\mu_0} u_s &= 0, \quad \text{ in }\Omega, \\
 \nu \times u_s|_{\p \Omega} &= f, 
\end{aligned}
\end{equation}
by relating it to a second order system of Maxwell type.
And show that these solutions  also yield zero divergence 
solutions to the Navier equation, that can additionally be made to 
satisfy some given finite dimensional constraint of the same form as 
in Lemma \ref{lem_monotonicity_ineq1}.

In order to study the vector Helmholtz equation in \eqref{eq_curlBVP} we will introduce
some additional concepts. Firstly we define the function spaces
\begin{align*} 
\Hscurl{s}{\Omega}
:=
\{ u \in  H^s ( \Omega)^3 \,:\, \nabla \times  u \in  H^s ( \Omega)^3 \},
\end{align*}
$s \in \R$.
To specify how the tangential trace operator acts we need the spaces
\begin{align*} 
H_t^{s}(\p\Omega)  
:= \{ f \in  H^{s} (\partial \Omega)^3 \,:\,  
\nu \cdot f|_{\p\Omega} = 0
% \,\nabla_{\p} \cdot F \in H^{s} (\partial \Omega)^3 
\}, 
\end{align*}
$s \in \R$.
The tangential trace operator is determined by the mapping $u \mapsto \nu \times u |_{\p \Omega}$, and
maps
\begin{equation} \label{eq_tang_trace}
\begin{aligned}
(\nu \times \, \cdot)|_{\p \Omega} : \Hscurl{s}{\Omega} \to  H_t^{s-1/2}(\p\Omega), 
\end{aligned}
\end{equation}
continuously.
For more on this see \cite{Po22,Ce96} and the references therein. 
Lastly we use the notation\footnote{More explicitly
$
L^2(\Omega ; \operatorname{div}0) := \{ u \in L^2(\Omega)^3 \; : \; (u,\ \nabla \varphi)_ {L^2(\Omega)^3} = 0,\,
\forall\varphi \in H^1_0(\Omega)\}.
$}
$$
L^2(\Omega ; \operatorname{div}0) := 
\{ u \in L^2(\Omega)^3 \; : \; \nabla \cdot u = 0 \}.
$$
The following lemma gives the existence and uniqueness of zero divergence solutions to the vector Helmholtz
equation \eqref{eq_curlBVP2}, modulo a possible  eigenspace corresponding of zero.

\begin{prop} \label{prop_div0_sol}
There exists a weak solution $u_s \in H^1(\Omega)^3$ to the boundary value problem 
\begin{equation} \label{eq_curlBVP2}
\begin{aligned}
\Delta u_s + \omega^2 \frac{\rho_0}{\mu_0} u_s &= 0,  \quad \text{ in } \Omega, \\
(\nu \times  u_s)|_{\p\Omega} &= f,  \quad \text{on } \p \Omega, 
\end{aligned}
\end{equation}
with $\nabla \cdot u_s = 0$ in $\Omega$, for every $f \in  H^{1/2}_t(\p \Omega)$,
provided that
\begin{equation} \label{eq_f_cond_2}
\begin{aligned}
( f, \nabla \times \phi)_{L^2( \p \Omega)^3} = 0, \qquad \forall \phi \in \mathcal E,
\end{aligned}
\end{equation}
where $\mathcal E$ is as in Lemma \ref{lem_maxwell_bc3}.
The solution $u_s$ is moreover unique as an element in $H^1(\Omega)^3 /\mathcal E$. 
\end{prop}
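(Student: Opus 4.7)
The plan is to reformulate \eqref{eq_curlBVP2} as a second order Maxwell type equation and then invoke the Fredholm theory developed in the appendix in section \ref{sec_maxwell}. The key observation is that, via the vector identity $\Delta = \nabla \nabla \cdot - \nabla \times \nabla \times$, a divergence-free solution of the vector Helmholtz equation is equivalent to a solution of
\begin{equation*}
\nabla \times \nabla \times u_s - \omega^2 \tfrac{\rho_0}{\mu_0} u_s = 0 \text{ in } \Omega, \qquad \nu \times u_s|_{\p \Omega} = f.
\end{equation*}
Conversely, taking the divergence of this Maxwell type equation and using $\nabla \cdot (\nabla \times \cdot) = 0$ yields $\omega^2 \tfrac{\rho_0}{\mu_0} \nabla \cdot u_s = 0$, so $\nabla \cdot u_s = 0$ holds automatically. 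The Helmholtz equation is then recovered from $\Delta u_s = -\nabla \times \nabla \times u_s$. The problem of producing zero divergence solutions to \eqref{eq_curlBVP2} thus reduces entirely to solving the above Maxwell type boundary value problem.

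Next I would apply the Fredholm theory of section \ref{sec_maxwell} to the operator $\nabla \times \nabla \times - \omega^2 \tfrac{\rho_0}{\mu_0}$ paired with the tangential trace condition, obtaining existence of a weak $H^1(\Omega)^3$ solution provided that $f$ is orthogonal to the relevant cokernel. To identify this cokernel with $\mathcal{E}$, I would use two applications of the integration by parts formula $\int_\Omega \nabla \times a \cdot b - a \cdot \nabla \times b \, dx = \int_{\p \Omega} (\nu \times a) \cdot b \, dS$ recorded earlier in the excerpt. Applied to $(\nabla \times \nabla \times u_s) \cdot \phi$ with $\phi \in \mathcal{E}$, and using that both $u_s$ and $\phi$ satisfy the Maxwell type equation with the same mass term, one obtains
\begin{equation*}
0 = \int_{\p \Omega} (\nu \times \nabla \times u_s) \cdot \phi + (\nu \times u_s) \cdot (\nabla \times \phi) \, dS.
\end{equation*}
Since $\nu \times \phi = 0$ on $\p \Omega$ forces $\phi$ to be purely normal there, while $\nu \times \nabla \times u_s$ is tangential, the first boundary integrand vanishes. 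Substituting $\nu \times u_s = f$ into the second then yields precisely the compatibility condition \eqref{eq_f_cond_2}.

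By the Fredholm alternative this orthogonality is also sufficient, which produces the required weak solution $u_s \in H^1(\Omega)^3$. Uniqueness modulo $\mathcal{E}$ is then immediate, since the difference of two solutions has vanishing tangential trace and lies in the kernel of the Maxwell type operator, hence in $\mathcal{E}$ by definition. The main obstacle in carrying this out rigorously is establishing the Fredholm property of the Maxwell type operator on a function space compatible with the regularity of the data $f \in H^{1/2}_t(\p \Omega)$, and tracking that the resulting weak solution actually lies in $H^1(\Omega)^3$ (rather than in a larger curl space); these technicalities are exactly what section \ref{sec_maxwell} in the appendix is designed to handle, so once its statements are in hand, the proof reduces to the reduction and integration by parts described above.
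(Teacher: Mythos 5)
Your proposal is correct and follows essentially the same route as the paper: reduce \eqref{eq_curlBVP2} to the second order Maxwell type system via $\Delta = \nabla\nabla\cdot - \nabla\times\nabla\times$, observe that taking the divergence forces $\nabla\cdot u_s=0$, and then invoke the Fredholm theory of the appendix (Lemma \ref{lem_maxwell_bc3}) to get existence under the compatibility condition \eqref{eq_f_cond_2} and uniqueness modulo $\mathcal E$. Your integration-by-parts identification of the cokernel condition is a correct (and slightly more explicit) rendering of what is already packaged inside Lemma \ref{lem_maxwell_bc3}, which the paper simply cites.
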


\begin{proof}
We reduce the problem to solving a second order system of Maxwell type.
A solution $v \in H^1(\Omega)^3$ to the equations
\begin{equation} \label{eq_maxwell}
\begin{aligned}
-\nabla \times \nabla \times v + \omega^2 \frac{\rho_0}{\mu_0} v &= 0,  \quad \text{ in } \Omega, \\
(\nu \times  v)|_{\p\Omega} &= f,  \quad \text{on } \p \Omega, 
\end{aligned}
\end{equation}
also solves \eqref{eq_curlBVP2}, 
since $\nabla \cdot v = 0$, which follows by taking the divergence of the  first equation,
and since $\Delta v = - \nabla \times \nabla \times v + \nabla \nabla \cdot v= - \nabla \times \nabla \times v $.
We see similar that a solution to \eqref{eq_curlBVP2} is a solution to  \eqref{eq_maxwell}. 
The claim follows thus from Lemma \ref{lem_maxwell_bc3}.
\end{proof}

\noindent
We want to obtain solutions \eqref{eq_constNavier} with zero divergence. 
We need these solutions however to be such that they satisfy a finite dimensional constraint
of the type that appears in the monotonicity inequality of Lemma \ref{lem_monotonicity_ineq1},
namely that $\gamma_\C u \in V^\perp$, where $\dim(V) < \infty$.
We need thus to set an additional condition on the solutions of the vector Helmholtz
equation \eqref{eq_curlBVP2} to make them satisfy this constraint.
Our goal will be to find a set $W_1 \subset H^{1/2}_t(\p \Omega)$, so that 
solutions $u$ of  \eqref{eq_curlBVP2} 
with $ (\nu\times u)|_{\p \Omega} = f \in W_1$,  will satisfy  that $\gamma_\C u \in V^\perp$.

\medskip
\noindent
Let $V \subset L^2(\p\Omega)^3$ be a finite dimensional subspace. 
We define the map $\mathbf{S}$ as
\begin{equation} \label{eq_Sdef}
\begin{aligned}
&\mathbf{S}: f \mapsto \gamma_{\C_0} u_f, \qquad \mathbf{S}: X \to Y, \\ 
&X:= H_t^{1/2}(\p \Omega) \cap \big \{ (\nabla \times \phi )|_{\p \Omega} \,:\, \phi \in \mathcal E \big\}^\perp, 
\quad Y:= \mathbf{S} X
\end{aligned}
\end{equation}
where $u_f$ solves \eqref{eq_curlBVP2} and where we consider $Y$ as subspace in $L^2(\p \Omega)^3$.
One can readily  show that $X \subset H_t^{1/2}(\p \Omega)$ is a closed subspace.\footnote{
Note also that it easy to see that $(\nabla \times \phi )|_{\p \Omega} \in H^{1/2}_t(\p \Omega)$, 
since the  regularity of the eigenfunctions $\phi\in\mathcal E$  only depend on regularity of the boundary. Since 
$H^{1/2}_t(\p \Omega) = H^{1/2}_t(\p \Omega)\cap  (\nabla \times \mathcal{E})^\perp + \nabla \times \mathcal E$,
one sees that $\dim(X) = \infty$.
}

\begin{lem} \label{lem_Yclosed}
We have that $Y \subset L^2(\p\Omega)^3$ as defined in \eqref{eq_Sdef} is a closed subspace.
\end{lem}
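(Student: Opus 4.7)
The plan is to show that $\mathbf{S}$ is a bounded linear map $X \to L^2(\p\Omega)^3$ with closed range; since $Y = \mathbf{S} X$ by definition, this immediately gives that $Y$ is closed in $L^2(\p\Omega)^3$. To make $\mathbf{S}$ single-valued I would pick, for each $f\in X$, the canonical representative $u_f \in \mathcal{E}^\perp$ of the solution class provided by Proposition \ref{prop_div0_sol}. The Fredholm-type a priori bound underlying Lemma \ref{lem_maxwell_bc3} (applied to the Maxwell reformulation used in the proof of Proposition \ref{prop_div0_sol}) then gives $\|u_f\|_{H^1(\Omega)^3}\leq C\|f\|_{H_t^{1/2}(\p\Omega)}$, and since $u_f$ is divergence-free one has $\gamma_{\C_0}u_f = 2\mu_0(\hat\nabla u_f)\nu$; elliptic regularity at the $C^{1,1}$-boundary allows one to interpret this traction as an element of $L^2(\p\Omega)^3$ depending continuously on $f$, so that $\mathbf{S}: X \to L^2(\p\Omega)^3$ is bounded.

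For the closed-range half I would prove the coercivity estimate $\|\mathbf{S}f\|_{L^2(\p\Omega)^3}\geq c\,\|f-\pi f\|_{H_t^{1/2}(\p\Omega)}$ for some $c>0$, where $\pi$ is the orthogonal projection of $X$ onto $\ker\mathbf{S}$. Arguing by contradiction, suppose $f_n\in X\cap(\ker\mathbf{S})^\perp$ with $\|f_n\|_{H_t^{1/2}(\p\Omega)}=1$ and $\|\mathbf{S}f_n\|_{L^2(\p\Omega)^3}\to 0$. The associated solutions $u_n=u_{f_n}\in\mathcal{E}^\perp$ are bounded in $H^1(\Omega)^3$, so a subsequence converges weakly to some $u$, and strongly in $L^2(\Omega)^3$ by Rellich. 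Passing to the limit in the weak formulation shows that $u$ is divergence-free, satisfies the vector Helmholtz equation, has vanishing traction $\gamma_{\C_0}u=0$, and its tangential trace $\nu\times u|_{\p\Omega}$ coincides with the weak limit $f$ of $f_n$; uniqueness modulo $\mathcal{E}$, combined with $u\in\mathcal{E}^\perp$, then forces $u=0$ and $f=0$.

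The main obstacle is to upgrade this weak convergence to strong convergence in $X$ so as to contradict $\|f_n\|_{H_t^{1/2}}=1$. For this I plan to use the a priori estimate
\[
\|u_f\|_{H^1(\Omega)^3}\leq C\bigl(\|\gamma_{\C_0}u_f\|_{L^2(\p\Omega)^3}+\|u_f\|_{L^2(\Omega)^3}\bigr),\qquad u_f\in\mathcal{E}^\perp,
\]
which follows from Proposition \ref{prop_elipEst_1} combined with the reduction to the second-order Maxwell system from Proposition \ref{prop_div0_sol} and the Fredholm alternative of Lemma \ref{lem_maxwell_bc3}. Both terms on the right tend to zero along the subsequence (the first by hypothesis, the second by Rellich and the identification of the weak limit as zero), so $u_n\to 0$ strongly in $H^1(\Omega)^3$; continuity of the tangential trace $H^1(\Omega)^3\to H_t^{1/2}(\p\Omega)$ then yields $f_n\to 0$ in $H_t^{1/2}(\p\Omega)$, contradicting $\|f_n\|_{H_t^{1/2}}=1$ and completing the proof.
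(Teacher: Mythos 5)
Your route --- proving that $\mathbf{S}$ is bounded below modulo its kernel and concluding that its range is closed --- is genuinely different from the paper's, which never uses any forward mapping property of $\mathbf{S}$. The paper instead starts from a convergent sequence $g_k=\mathbf{S}f_k\to g_0$ in $L^2(\p\Omega)^3$, solves the Neumann problems $L_0u_k=0$, $\gamma_{\C_0}u_k=g_k$, and uses the standard well-posedness estimate $\|u_0-u_k\|_{H^1(\Omega)^3}\leq C\|g_0-g_k\|_{L^2(\p\Omega)^3}$ to pass the divergence-free property and the orthogonality condition \eqref{eq_f_cond_2} to the limit; it only needs continuity of the solution operator in the direction $g\mapsto u$, which is exactly what the non-resonance assumption supplies.

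The gap in your argument is the very first step: the claim that $\mathbf{S}:X\to L^2(\p\Omega)^3$ is bounded. With $f\in H_t^{1/2}(\p\Omega)$ you only obtain $u_f\in H^1(\Omega)^3$, so $\hat\nabla u_f\in L^2(\Omega)^{3\times 3}$ and the traction $\gamma_{\C_0}u_f=2\mu_0(\hat\nabla u_f)\nu|_{\p\Omega}$ is naturally an element of $H^{-1/2}(\p\Omega)^3$, defined weakly through the bilinear form as in Section \ref{sec_defs}, not of $L^2(\p\Omega)^3$; a $C^{1,1}$ boundary does not upgrade this, since one would need roughly $u_f\in H^{3/2}(\Omega)^3$, i.e.\ tangential data one order more regular than $H^{1/2}_t$. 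This matters twice in your plan: once to assert that $\mathbf{S}f$ depends continuously on $f$ in $L^2(\p\Omega)^3$, and once at the end, where closedness of the range requires identifying the limit $g_0$ of a Cauchy sequence $\mathbf{S}f_n$ with $\mathbf{S}f$ for the limit $f$ of the $f_n$. The second use is repairable: your coercivity estimate is essentially sound (indeed, under the non-resonance assumption one gets $\|f\|_{H_t^{1/2}(\p\Omega)}\leq C\|u_f\|_{H^1(\Omega)^3}\leq C'\|\mathbf{S}f\|_{L^2(\p\Omega)^3}$ rather directly), so $(f_n)$ is Cauchy in the closed subspace $X$ and converges to some $f\in X$, after which the weak ($H^{-1/2}$) continuity of the Neumann trace with respect to $H^1(\Omega)^3$ convergence of the solutions identifies $\gamma_{\C_0}u_f$ with $g_0\in L^2(\p\Omega)^3$. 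With that substitution your proof closes; as written, however, the $L^2$-boundedness of $\mathbf{S}$ is asserted rather than proved, and it is precisely the step the paper's choice of direction is designed to avoid.
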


\begin{proof}
Let $g_k \in Y$, $k\geq 1$ be a sequence such that $g_k \to g_0$ in $L^2(\p \Omega)^3$.
We want to show that $g_0 \in Y$.
Let $u_i \in H^1(\Omega)^3$, $i\geq0$ be the corresponding solutions to the boundary value problem
\begin{align*}
\begin{cases}
L_0 u_i &= 0, \\
\gamma_{\C_0} u_i &= g_i,
\end{cases}
\end{align*}
which exist, since we assume that $\omega$ is a non resonant frequency.
For $k\geq 1$ we have that 
\begin{align*}
\begin{cases}
L_0 (u_0- u_k) &= 0, \\
\gamma_{\C_0} (u_0- u_k) &= g_0-g_k.
\end{cases}
\end{align*}
Note that since $g_k \in Y$, we have by the unique solvability of \eqref{eq_curlBVP2}
that $u_k \equiv u_s + \phi$, where $u_s$ solves \eqref{eq_curlBVP2} and $\phi \in \mathcal E$.
Thus we know that
$$
\nabla \cdot u_k = \nabla \cdot (u_s + \phi) = 0, \quad \text{ for all } k\geq 1.
$$
By the well-posedness of the above boundary value problem we have that
$$
\| \nabla \cdot u_0 \|_{L^2(\Omega)^3} \leq  \| u_0 - u_k \|_{H^1(\Omega)^3} \leq \| g_0 - g_k \|_{L^2(\p \Omega)^3}
\to 0.
$$
Thus $ \nabla \cdot u_0 = 0$. 
Moreover $u_0 \in H^1(\Omega)^3$, and $\p \Omega$ is $C^{1,1}$,
and thus  $f := \nu \times u_0 \in H_t^{1/2}(\p \Omega)$. 
There exists $f_k \in X$, $k \geq 1$, so that $g_k = \mathbf{S}f_k$. Let $v_k$ be solutions
of \eqref{eq_curlBVP2} corresponding to the $f_k$, $k\geq 1$. Then since $\omega$ is a non-resonance frequency
and $g_k = \mathbf S f_k$, we have by uniqueness of solutions to \eqref{eq_bvp1}, that $u_k = v_{k}$.
Now since $f_k \in X$, we have by \eqref{eq_f_cond_2} that
$$
0 =
( f_k, \nabla \times \phi)_{L^2(\p \Omega)^3}  = 
(\nu \times v_k, \nabla \times \phi)_{L^2(\p \Omega)^3}  = 
(\nu \times u_k, \nabla \times \phi)_{L^2(\p \Omega)^3},\quad k \geq 1. 
$$
It follows that 
\begin{align*}
(\nu\times u_0, \nabla \times \phi)_{L^2(\p \Omega)^3}
&=
\big(\nu\times u_0 - \nu \times u_k , \nabla \times \phi\big)_{L^2(\p \Omega)^3}\\
&\leq C\| \nu \times  u_0 - \nu \times  u_k \|_{L^2(\p \Omega)^3} \\
&\leq  C\| u_0 - u_k \|_{H^1(\Omega)^3} \leq \| g_0 - g_k \|_{L^2(\p \Omega)^3}
\to 0.
\end{align*}
so that condition \eqref{eq_f_cond_2} holds for $f= \nu \times u_0$.
Hence we see that $u_0$ solves problem 
\eqref{eq_curlBVP2}, with $f = \nu \times u_0$.
Thus $g_0 = \gamma_{\C_0} u_0 = \mathbf S f$, and $g_0 \in Y$.
\end{proof}

\noindent
Since $Y \subset L^2(\p \Omega)^3$ is a closed subspace
by the previous lemma we can make the definition
$$
V_Y := \operatorname{Pr}_Y V, \qquad V'_Y := \{ y \in Y \,:\, y \perp V_Y   \},
$$
where $\operatorname{Pr}_Y$ is the orthogonal projection to $Y$ in $L^2(\p \Omega)^3$.
Note that $V_Y \subset Y$ is a closed subspace of Y, and that $V'_Y = V_Y^{\perp_Y}$, where $\perp_Y$ stand for the 
orthogonal complement in the subspace $L^2(Y)^3$. We thus have the orthogonal decomposition of $Y$ as
$$
Y = V_Y + V'_Y
$$
Furthermore we define the pre-images
\begin{equation} \label{eq_def_W0_W1}
\begin{aligned}
W_0 :=  \mathbf{S}^{-1} V_Y, \qquad W_1 :=  \mathbf{S}^{-1}V'_Y,
\end{aligned}
\end{equation}
Notice that
\begin{equation} \label{eq_dim_W0}
\begin{aligned}
\dim(W_0) < \infty.
\end{aligned}
\end{equation}
This is because
the mapping $ \mathbf{S}$ has at most a finite dimensional kernel, since if $u_s$ solves 
\eqref{eq_constNavier}, and $\gamma_{\C_0} u_s = 0$, then $u_s$ is in the eigenspace 
of zero for \eqref{eq_constNavier},
which is finite dimensional. We now have that $\dim(W_0) < \infty$, because of this and because
$W_0$ is the pre-image of $V_Y$, which has a finite dimension.
Later on we will need the following elementary lemma.

\begin{lem} \label{lem_X_split} We have that
\begin{equation} \label{eq_X_split}
\begin{aligned}
X =  W_0 + W_1.
\end{aligned}
\end{equation}
\end{lem}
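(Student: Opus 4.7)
The plan is to establish the decomposition by starting with an arbitrary $f \in X$ and exploiting the orthogonal splitting $Y = V_Y + V_Y'$ that was built into the definitions of $W_0$ and $W_1$. The main point is that since $Y$ is a closed subspace of $L^2(\partial\Omega)^3$ (by Lemma~\ref{lem_Yclosed}) and $V_Y \subset Y$ is finite-dimensional, the orthogonal projection $\operatorname{Pr}_Y$ gives a genuine direct-sum decomposition of $Y$. Pulling this decomposition back through $\mathbf{S}$ should reproduce the claimed decomposition of $X$, modulo possible issues from the kernel of $\mathbf{S}$; the key is that the kernel is absorbed harmlessly into $W_0$.

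In detail, I would argue as follows. Fix $f \in X$. Since $\mathbf{S}f \in Y = \mathbf{S}X$ and $Y = V_Y \oplus V_Y'$ (orthogonal in $L^2(\partial\Omega)^3$), write
\begin{equation*}
\mathbf{S}f = y_0 + y_1, \qquad y_0 \in V_Y,\; y_1 \in V_Y'.
\end{equation*}
Because $V_Y' \subset Y = \mathbf{S}X$, there exists $f_1 \in X$ with $\mathbf{S}f_1 = y_1$, so by definition $f_1 \in W_1 = \mathbf{S}^{-1}V_Y'$. Now set $f_0 := f - f_1 \in X$ (this is where using the linearity of $\mathbf{S}$ and the fact that $X$ is a linear subspace is crucial). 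A direct computation gives
\begin{equation*}
\mathbf{S}f_0 = \mathbf{S}f - \mathbf{S}f_1 = (y_0 + y_1) - y_1 = y_0 \in V_Y,
\end{equation*}
so $f_0 \in W_0 = \mathbf{S}^{-1}V_Y$. Hence $f = f_0 + f_1 \in W_0 + W_1$, which gives the inclusion $X \subset W_0 + W_1$. The reverse inclusion is immediate since $W_0, W_1 \subset X$ by definition.

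There is no real obstacle here; the content lives entirely in the preceding lemma (closedness of $Y$) and in the definitions, and once the orthogonal splitting $Y = V_Y + V_Y'$ is in hand the argument is purely set-theoretic. The only subtle point worth flagging is that the representation $f = f_0 + f_1$ need not be unique, since $\ker \mathbf{S} \cap X$ is contained in both $W_0$ and $W_1$ (the zero vector lies in both $V_Y$ and $V_Y'$); but Lemma~\ref{lem_X_split} only claims a sum decomposition, not a direct sum, so this causes no difficulty.
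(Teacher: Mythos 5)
Your proposal is correct and follows essentially the same route as the paper: decompose $\mathbf{S}f$ orthogonally in $Y = V_Y + V_Y'$, pull the pieces back through $\mathbf{S}$, and let the kernel of $\mathbf{S}$ be absorbed into $W_0$. Your version is in fact a slight streamlining — by defining $f_0 := f - f_1$ directly you get $\mathbf{S}f_0 = y_0$ in one step, whereas the paper separately chooses preimages of both components and then handles the kernel element $z := x - x'$ explicitly — but the underlying idea is identical.
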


\begin{proof}
To prove the claim suppose that $x \in X$, then $y:=\mathbf{S}x \in Y$. Decompose $y = y_0 + \tilde y$, where
$$
y_0 := \operatorname{Pr}_{V_Y} y \in V_Y,\qquad \tilde y := y - y_0 \in V'_Y.
$$
Now consider an element  $x' \in X$ such that
$$
x' = x_0 + \tilde x, \qquad x_0 \in \mathbf{S}^{-1}y_0, \quad \tilde x \in  \mathbf{S}^{-1}\tilde y
$$
We have that 
$$
\mathbf{S}(x-x') =  \mathbf{S}x - y_0 - \tilde y = 0.
$$
So that $z := x-x' \in \operatorname{Ker}(\mathbf{S}) \subset \mathbf{S}^{-1}V_Y$.
Now
$$
x = z + x' \in \mathbf{S}^{-1}V_Y + \mathbf{S}^{-1}V_Y + \mathbf{S}^{-1}V'_Y = W_0 +W_1.
$$
Thus \eqref{eq_X_split} holds. 
\end{proof}

\noindent 
The following lemma shows that the requirement $f \in W_1$ gives us a solution $u$ with a boundary value
that satisfies the desired property, i.e. $\gamma_\C u \perp V$.

\begin{lem} \label{lem_tildeV}
We have that 
\begin{equation} \label{eq_tildeV_impl}
\begin{aligned}
f \in W_1 \quad \Rightarrow \quad \mathbf{S} f \perp V.
\end{aligned}
\end{equation}
\end{lem}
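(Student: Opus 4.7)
The plan is to prove the implication by unpacking the definitions of $W_1$, $V'_Y$, and $V_Y$, and then exploiting the self-adjointness of the orthogonal projection $\operatorname{Pr}_Y$ with respect to the $L^2(\partial\Omega)^3$-inner product. Since the statement is essentially a formal consequence of how we set up the splitting $Y = V_Y \oplus V'_Y$, no analytic estimates are needed; the content lies entirely in the algebra of orthogonal projections.

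First I would take an arbitrary $f \in W_1 = \mathbf{S}^{-1}V'_Y$ and set $y := \mathbf{S}f \in V'_Y \subset Y$. By definition of $V'_Y$ we then have $(y, w)_{L^2(\partial\Omega)^3} = 0$ for every $w \in V_Y$. The goal is to upgrade this to $(y, v)_{L^2(\partial\Omega)^3} = 0$ for every $v \in V$.

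Second, I would fix $v \in V$ and write the orthogonal decomposition
\begin{equation*}
v = \operatorname{Pr}_Y v + (v - \operatorname{Pr}_Y v),
\end{equation*}
where $\operatorname{Pr}_Y v \in V_Y$ by definition of $V_Y = \operatorname{Pr}_Y V$, and $v - \operatorname{Pr}_Y v \in Y^\perp$ by the defining property of the orthogonal projection onto the closed subspace $Y$ (closedness being exactly Lemma \ref{lem_Yclosed}, which is why that lemma was needed). Pairing with $y \in Y$ kills the $Y^\perp$-component:
\begin{equation*}
(y, v)_{L^2(\partial\Omega)^3} = (y, \operatorname{Pr}_Y v)_{L^2(\partial\Omega)^3} + (y, v - \operatorname{Pr}_Y v)_{L^2(\partial\Omega)^3} = (y, \operatorname{Pr}_Y v)_{L^2(\partial\Omega)^3}.
\end{equation*}
Since $\operatorname{Pr}_Y v \in V_Y$ and $y \in V'_Y = V_Y^{\perp_Y}$, the remaining inner product vanishes, giving $(\mathbf{S}f, v)_{L^2(\partial\Omega)^3} = 0$. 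As $v \in V$ was arbitrary, we conclude $\mathbf{S}f \perp V$.

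There is no real obstacle in this argument; the only subtle point is recognizing that one cannot directly use $y \perp V_Y$ against $v$ itself (since $v$ need not lie in $Y$), and that the clean way to bridge this gap is to split $v$ along $Y$ and $Y^\perp$. The closedness of $Y$ established in Lemma \ref{lem_Yclosed} is what legitimizes this decomposition, and everything else is routine.
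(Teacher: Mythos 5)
Your proof is correct and follows essentially the same route as the paper: both split $v \in V$ as $\operatorname{Pr}_Y v + \operatorname{Pr}_{Y^\perp}v$, kill the $Y^\perp$-component using $\mathbf{S}f \in Y$, and kill the $V_Y$-component using $\mathbf{S}f \in V'_Y = V_Y^{\perp_Y}$. Your added remark that the closedness of $Y$ (Lemma \ref{lem_Yclosed}) is what legitimizes the orthogonal decomposition is a useful clarification the paper leaves implicit.
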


\begin{proof}
We assume that $f \in W_1$ and hence $\mathbf{S}f \in V'_Y$ and therefore $\mathbf{S}f \perp V_Y$.
We now claim that $\mathbf{S}f \perp V$. To this end assume that $v \in V$, and split
$$
v = \operatorname{Pr}_Y v + \operatorname{Pr}_{Y^\perp}v.
$$
We then have that 
$$
(\mathbf{S}f\,,v)_{L^2(\p \Omega)^3} = 
(\mathbf{S}f\,, \operatorname{Pr}_Y v )_{L^2(\p \Omega)^3} + 
(\mathbf{S}f\,, \operatorname{Pr}_{Y^\perp}v)_{L^2(\p \Omega)^3} = 0,
$$
where the first term on the r.h.s. is zero because $\mathbf{S}f \perp V_Y$, 
the second term is zero, because  $\mathbf{S}f \in Y$. We thus see that 
$
\mathbf{S}f \perp v, \forall v \in V,
$
which is what we wanted to prove.
\end{proof}

\noindent
We summarize the above results in the following lemma.

\begin{lem} \label{lem_V_to_tildeV}
There exists a subspace  $ W_1 \subset X$  such that if $f \in W_1$ and  $u_s \in H^1(\Omega)^3$ 
is a solution to the Dirichlet problem
\begin{align} \label{eq_tildeL0}	
\begin{cases}
\tilde L_0 u_s &= 0, \quad \text{ in } \; \Omega\\
 \nu \times u_s|_{\p \Omega} &= f,
\end{cases}
\end{align}
with $ \nabla \cdot u_s = 0$, then $u_s$ coincides with a solution $u \in H^1(\Omega)^3$ to
\begin{align} \label{eq_L0}	
\begin{cases}
L_0 u &= 0, \quad \text{ in } \; \Omega\\
\gamma_\C u |_{\p \Omega} &= u_s|_{\p \Omega},
\end{cases}
\end{align}
where $ \nabla \cdot u = 0$ and $\gamma_\C u |_{\p \Omega} \in V^\perp$.
\end{lem}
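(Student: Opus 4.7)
The plan is to take $W_1$ to be exactly the subspace already constructed in \eqref{eq_def_W0_W1}, namely $W_1 := \mathbf{S}^{-1} V'_Y$. Since $\mathbf{S}$ is defined on $X$, we automatically have $W_1 \subset X$, which in particular means any $f \in W_1$ is tangential, lies in $H^{1/2}_t(\p\Omega)$, and satisfies the orthogonality condition \eqref{eq_f_cond_2}. Thus Proposition \ref{prop_div0_sol} applies and produces a weak solution $u_s \in H^1(\Omega)^3$ of \eqref{eq_tildeL0} with $\nabla \cdot u_s = 0$.

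Next I would observe that a divergence-free solution of the vector Helmholtz equation automatically solves the constant-coefficient Navier equation \eqref{eq_constNavier}. Indeed, because $\nabla \cdot u_s = 0$, the Lamé operator reduces to
\begin{equation*}
L_0 u_s = \mu_0 \Delta u_s + (\lambda_0+\mu_0)\nabla(\nabla \cdot u_s) + \omega^2 \rho_0 u_s = \mu_0\Big(\Delta u_s + \omega^2\tfrac{\rho_0}{\mu_0} u_s\Big) = 0,
\end{equation*}
so setting $u := u_s$ gives a solution of $L_0 u = 0$ with $\nabla \cdot u = 0$, and the boundary trace coincides, $\gamma_{\C_0} u|_{\p\Omega} = \gamma_{\C_0} u_s|_{\p\Omega}$. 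This is precisely the statement that $u_s$ coincides with a solution of \eqref{eq_L0}.

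Finally, to verify that $\gamma_{\C_0} u \in V^\perp$, note that by definition of $\mathbf{S}$ we have $\gamma_{\C_0} u = \gamma_{\C_0} u_s = \mathbf{S} f$. Because $f \in W_1$, Lemma \ref{lem_tildeV} yields $\mathbf{S} f \perp V$, which completes the verification. I expect no substantive obstacle here: all the real work has been carried out in Proposition \ref{prop_div0_sol} (existence of divergence-free solutions to the vector Helmholtz problem), in Lemma \ref{lem_Yclosed} (closedness of $Y$, which makes the projection $\operatorname{Pr}_Y$ and hence the decomposition $Y = V_Y \oplus V'_Y$ well defined), and in Lemma \ref{lem_tildeV} (the orthogonality $\mathbf{S}f \perp V$). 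The present lemma is essentially a bookkeeping combination of these facts together with the elementary observation that the vanishing of $\nabla \cdot u_s$ collapses $L_0$ onto the vector Helmholtz operator with constant factor $\mu_0$.
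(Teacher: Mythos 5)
Your proposal is correct and follows essentially the same route as the paper's own (very terse) proof: take $W_1 = \mathbf{S}^{-1}V'_Y$ from \eqref{eq_def_W0_W1}, invoke Proposition \ref{prop_div0_sol} for existence, use the identity $L_0 u_s = \mu_0\big(\Delta u_s + \omega^2\tfrac{\rho_0}{\mu_0}u_s\big)$ valid when $\nabla\cdot u_s = 0$, and conclude $\gamma_{\C_0}u_s = \mathbf{S}f \perp V$ via Lemma \ref{lem_tildeV}. Your write-up is if anything slightly more explicit than the paper's, which simply notes the equivalence of the two equations under zero divergence and defers the weak-solution verification as standard.
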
		

\begin{proof}
% \textcolor{blue}{(This is tedious to check. Proof should probably be shortened, since it is 
% in the non weak case is straight forward once Lemma \ref{lem_tildeV} is proved.)}
Note that Lemma	 \ref{lem_tildeV} shows that $\gamma_\C u |_{\p \Omega} \in V^\perp$ holds.
Moreover since $ \nabla \cdot u_s = 0$, we have formally that 
$$
\tilde L_0 u_s = \Delta u_s + \omega^2 \frac{\rho_0  }{ \mu_0 }u_s = 0
\quad \Leftrightarrow \quad 
L_{0} u_s = \mu_0 \Delta u_s + (\lambda_0 + \mu_0) \nabla \nabla \cdot u_s + \omega^2 \rho_0 u_s = 0 
$$
which together with uniqueness shows  that the claim holds for regular $u_s$ and $u$.
This also holds for weak solutions to \eqref{eq_tildeL0} and \eqref{eq_L0}, which
can be readily checked.
%\footnote{ \textcolor{blue}{The weak form of the proof is now in the comments of the latex file.}}
These arguments are standard and we omit the details.
\end{proof}

\subsection{Localization with Runge approximation for the constant coefficient operator} \label{sec_runge}
Here we  will study the localization of zero divergence solutions, that satisfy some finite dimensional 
constraint on the boundary, when the material parameters $\lambda,\mu$ and $\rho$ are constant.
More particularly we have the following result which is a counter part to Proposition \ref{prop_loc2}.

\begin{prop} \label{prop_loc_div0}
Assume that $D_1,D_2,D'_1,D'_2 \subset \Omega$ are as in Proposition \ref{prop_loc2}. 
Let $V\subset L^2(\p \Omega)^3$ be a subspace with $\dim (V)<\infty$,
then there exists a sequence of boundary conditions $g_j \in L^2(\p \Omega)^3$,
such that $ g_j \perp V$ in the $L^2(\p \Omega)^3$-norm,
$
\nabla \cdot u_j = 0,
$
and
$$
\| u_j \|_{ L^2(D_1)^3 } \to 0, 
\qquad \| \hat \nabla u_j \|_{ L^2(D'_1)^{3\times 3} } 
\to 0, 
$$
and for which
$$
\| u_j \|_{ L^2(D_2)^3 } \to  \infty,
\qquad \| \hat \nabla u_j \|_{ L^2(D'_2)^{3\times 3} } 
\to \infty, 
$$
as $j \to \infty$, and where $u_j$ solves \eqref{eq_bvp1}
with constant material parameters $\lambda_0, \mu_0$ and $\rho_0$.
\end{prop}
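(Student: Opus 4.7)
The plan is to reduce the problem to a localization statement for the vector Helmholtz equation \eqref{eq_curlBVP2} and then transfer the result back to the Navier equation using the framework set up in Section \ref{sec_zero_div}. By Lemma \ref{lem_V_to_tildeV}, it is enough to produce a sequence of boundary data $f_j \in W_1 \subset H^{1/2}_t(\p\Omega)$ whose corresponding divergence-free solutions $u_j \in H^1(\Omega)^3$ of \eqref{eq_tildeL0} satisfy $\|u_j\|_{L^2(D_1)^3}$ and $\|\hat\nabla u_j\|_{L^2(D'_1)^{3\times 3}}$ tending to zero and $\|u_j\|_{L^2(D_2)^3}$, $\|\hat\nabla u_j\|_{L^2(D'_2)^{3\times 3}}$ tending to infinity. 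The associated Neumann data $g_j := \gamma_{\C_0} u_j$ then automatically lie in $V^\perp$ by Lemma \ref{lem_tildeV} and yield divergence-free solutions of \eqref{eq_bvp1} with the constant background parameters, which is exactly what the proposition requires.

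To construct such $f_j$, I would run the same Runge approximation / dichotomy scheme that underlies Proposition \ref{prop_loc2}. Concretely, I would consider the linear map $T : W_1 \to L^2(D_1)^3 \times L^2(D'_1)^{3\times 3}$ sending $f$ to $(u_f|_{D_1}, \hat\nabla u_f|_{D'_1})$, and establish two facts: first, that the range of $T$ is dense in the appropriate codomain (modulo the divergence-free constraint built into the solutions); second, that this density forces the existence of a sequence $f_j$ whose images $u_j$ are small in $D_1$ but have norms in $D_2$ diverging to infinity, because the alternative would produce, via a standard uniform boundedness / closed graph argument, a non-trivial element in the annihilator of the range and contradict the density.

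The main obstacle I anticipate is establishing the Runge-type density statement for divergence-free solutions of $\tilde L_0$ with tangential boundary data constrained to the subspace $W_1$ rather than to the full space $X$. The density on $X$ itself can be obtained by reducing, via the equivalence between \eqref{eq_curlBVP2} and the Maxwell-type system \eqref{eq_maxwell}, to a classical Runge approximation property for the second order Maxwell operator $-\nabla\times\nabla\times\,\cdot\, + \omega^2(\rho_0/\mu_0)\,\cdot\,$, for which the Fredholm analysis of Section \ref{sec_maxwell} together with weak unique continuation provides the needed ingredients. Since $W_1$ has finite codimension inside $X$ by Lemma \ref{lem_X_split} together with \eqref{eq_dim_W0}, passing from density on $X$ to density on $W_1$ amounts only to a finite-dimensional correction, which is absorbed harmlessly into the dichotomy.

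Once the density is secured, the blow-up in $D_2$ is enforced by the usual normalization trick: if $\|u_j\|_{L^2(D_2)^3}$ remained bounded along every admissible sequence producing smallness in $D_1$, then rescaling $f_j$ so as to have unit norm in the $D_2$-seminorm and extracting a weak limit would produce a non-trivial functional annihilating the range of $T$, contradicting the density already established. The corresponding statements for $\hat\nabla u_j$ on $D'_1$ and $D'_2$ follow from interior elliptic regularity applied to $\tilde L_0$ on neighbourhoods of $\overline{D'_1}$ and $\overline{D'_2}$ respectively, exactly in the manner of \cite{EP24}, so these do not require a separate argument beyond the $L^2$-level localization.
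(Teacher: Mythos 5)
Your opening reduction is the paper's: Lemma \ref{lem_V_to_tildeV} converts the problem into producing divergence-free solutions of the vector Helmholtz (equivalently, second order Maxwell) system with tangential data in $W_1$, the finite codimension of $W_1$ in $X$ (Lemma \ref{lem_X_split} together with \eqref{eq_dim_W0}) is what keeps the constraint harmless, and the gradient statements on $D_1'\Subset D_1$, $D_2'\Subset D_2$ do follow from interior regularity. The gap is in the mechanism that is supposed to produce the dichotomy. The paper does not argue through density of the range of an evaluation map on $D_1$; it constructs an explicit blow-up candidate (Lemma \ref{lem_blowupCand_div0}): a solution $w$ of the Maxwell-type system on $D=D_1\cup D_2$ with $w\equiv 0$ on $D_1$ and $w,\hat\nabla w\neq 0$ on $D_2$, which in addition satisfies the approximability criterion $w\perp\mathcal F$ of Lemma \ref{lem_approxCrit}. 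This is where the real work of Section \ref{sec_runge} lies: one must show that $w\perp\mathcal F$ amounts to finitely many linear conditions on the tangential datum of $w$ on $\p D_2$, and that requires both $\dim(W_1^\perp\cap X)<\infty$ (Lemma \ref{lem_U_fin}) and a unique continuation argument for the Maxwell operator on $\Omega\setminus D_2$ to show $\dim\mathcal W<\infty$. Once $w$ exists, one approximates it by global constrained solutions $w_j$ with $\|w_j-w\|_{L^2(D)^3}$ decaying fast enough, sets $u_j:=jw_j$, and reads off the limits. Your phrase about a ``finite-dimensional correction absorbed harmlessly into the dichotomy'' is precisely the step you have not supplied.

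Your proposed substitute does not close this gap. Density of the range of $T:f\mapsto(u_f|_{D_1},\hat\nabla u_f|_{D_1'})$ carries no information about the size of $u_f$ on $D_2$, so it cannot force blow-up there. The normalization argument in your last paragraph also fails: extracting a weak limit requires a uniform $H^1(\Omega)^3$ bound, which boundedness of the $D_2$-seminorm does not give; and if a limit did exist it would be a global solution vanishing on the open set $D_1$, hence identically zero by unique continuation --- this contradicts the normalization, not the density of $\operatorname{Ran}(T)$, so no nontrivial annihilating functional is produced. A purely functional-analytic route is possible, but it must compare the ranges of the \emph{adjoints} of the two evaluation operators (on $D_1$ and on $D_2$) in the style of the localized potentials lemma of \cite{Ge08}; that would be a genuinely different, workable alternative, and it is not what you wrote.
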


\begin{proof}
We set $D:= D_1 \cup D_2$.
Let $W_1 \subset H_t^{1/2}(\Gamma_N)$ be the subspace given by Lemma \ref{lem_V_to_tildeV}.
By this Lemma it is enough to construct the sequence of solutions $u_j$ to the vector Helmholtz equation,
$$
\Delta u_j + \omega^2\frac{\rho_0}{\mu_0} u_j = 0,
\qquad \nu \times u_j|_{\Gamma_N} = f \in W_1,
$$
% for a particular finite dimensional subspace $\tilde V \subset H^{1/2}_t(\p \Omega)$.
By the Proposition \ref{lem_blowupCand_div0} below,
we can find a $w \in \mathcal{S}_D$ with $ \nabla \cdot w = 0$ such that $w|_{D_1} = 0$ and 
$w|_{D_2}, \hat \nabla w|_{D_2} \neq 0$. By Lemma \ref{lem_approxCrit},
we can moreover approximate $w$  by a sequence of solutions $w_j \in H^1(D)^3$
in the $L^2(D)^3$-norm.

The desired sequence $u_j$ can now be defined
as $u_j:= j w_j$. One can now readily show that the $u_j$ obey the limits of the claim.
For details see the proofs of Propositions 5.2 and 5.1 in \cite{EP24}.
\end{proof}

\noindent
To complete the proof of Proposition \ref{prop_loc_div0} we need to construct the vector field $w$
and show that it can be approximated in the desired way.
To this end let $D = D_1 \cup D_2 \Subset \Omega$, where $D_1$ and $D_2$ are as in \eqref{eq_Dassump}.
It is enough to construct $w$  as a solution to the Maxwell type system
\begin{equation} \label{eq_maxwell_2}
\begin{aligned}
\hat L_0 w := -\nabla \times \nabla \times w+ \omega^2 \frac{\rho_0}{\mu_0} w &= 0,  \quad \text{ in } D, \\
(\nu \times  w)|_{\p D} &= f,  \quad \text{on } \p D,
\end{aligned}
\end{equation}
since solutions to \eqref{eq_maxwell_2}, give directly divergence free solutions to the vector Helmholtz equation,
by the argument in the proof of Proposition \ref{prop_div0_sol}.

We will use a modification of the Runge type arguments in \cite{EP24} and \cite{HPS19b} applied to equation \eqref{eq_maxwell_2}.
Define the set of solutions
$$
\mathcal{S}_\Omega := 
\{ u \in H^1(\Omega)^3 \,:\, 
\hat L_0 u = 0 \text{ in $\Omega$}, \; \nu \times u |_{\p \Omega} \in W_1 \},
$$
where $W_1$ is the subspace given by \eqref{eq_def_W0_W1}.
% Note that by Propositions \ref{prop_div0_sol} all $u \in \mathcal{S}_\Omega$ have 
Note that $\nabla \cdot u = 0$ for $u \in \mathcal{S}_\Omega$, and 
that by Lemma \ref{lem_V_to_tildeV} all $u \in \mathcal{S}_\Omega$ solve the constant coefficient Navier equation,
with $\gamma_{\C_0} u |_{\Gamma_N} \in V^\perp$.
We are interested in using the elements of $\mathcal{S}_\Omega$ to approximate solutions over a smaller set $D \Subset \Omega$ 
in the $L^2(D)^3$-norm, or more specifically solutions in the set
$$
\{ u \in H^1(D)^3 \,:\, \hat L_{0} u =  0 \text{ in } D \}. %, \text{ with } \nabla \cdot u = 0\}.
$$
The following lemma gives a criterion for which functions in the above set can be approximated
by functions in $\mathcal{S}_\Omega$.

\begin{lem} \label{lem_approxCrit}
Every $u \in \mathcal{S}_D$, where
$$
\mathcal{S}_D := \{ u \in H^1(D)^3 \,:\, \hat L_{0} u =  0 \text{ in } D,\, 
u \perp \mathcal{F} \},
$$
can be approximated by elements in $\mathcal{S}_\Omega$ in the $L^2(D)^3$-norm, where $\mathcal{F}$ is given by
\begin{align*}
\mathcal{F} := \spn\big\{ F \in L^2(D;\operatorname{div}0) \,:\, F_0 \perp \mathcal E, \, 
\nu \times \nu \times (\nabla \times SF)|_{\p \Omega} \in U  \big\},
\end{align*}
where $U := W^\perp_1 \cap X$ 
and $S: F \mapsto u_F$ is the source-to-solution operator of the problem \eqref{eq_bvp_w} below,
and $\mathcal E$ is as in Lemma \ref{lem_maxwell_source3}, and $F_0$ is the extension by zero of $F$ to $\Omega$.
\end{lem}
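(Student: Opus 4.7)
The plan is a Hahn--Banach duality argument in the style of classical Runge approximation. Equivalent to the claim is the dual statement: if $\phi \in L^2(D)^3$ satisfies $(v, \phi)_{L^2(D)^3} = 0$ for every $v \in \mathcal{S}_\Omega$, then $(u, \phi)_{L^2(D)^3} = 0$ for every $u \in \mathcal{S}_D$. Hahn--Banach then places $\mathcal{S}_D$ inside the $L^2(D)^3$-closure of $\mathcal{S}_\Omega|_D$, which is what we want.

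Given such a $\phi$, I would first pre-process it so that it fits the hypotheses of the source problem \eqref{eq_bvp_w}. Since every $u \in \mathcal{S}_D$ is divergence-free (take $\nabla\cdot$ of $\hat L_0 u = 0$ and use $\omega \neq 0$), a Helmholtz decomposition $\phi = \phi_{\mathrm{df}} + \nabla p$ with $p \in H^1_0(D)$ shows that only the solenoidal part of $\phi$ contributes to the pairing against $\mathcal{S}_D$, so I may assume $\phi \in L^2(D; \operatorname{div}0)$. Extending by zero gives $\phi_0 \in L^2(\Omega)^3$; after subtracting the $\mathcal{E}$-component of $\phi_0$ (which by design lies in $\mathcal{F}$ and is therefore annihilated by $\mathcal{S}_D$), I may further assume $\phi_0 \perp \mathcal{E}$, which is precisely the Fredholm condition required to solve \eqref{eq_bvp_w}. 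Let $w := S\phi \in H^1(\Omega)^3$ be the resulting solution, which carries the Dirichlet-type boundary condition $\nu \times w|_{\p\Omega} = 0$ built into $S$.

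The remaining step translates $\phi \perp \mathcal{S}_\Omega|_D$ into $\phi \in \mathcal{F}$. Applying the vector integration-by-parts identity from Section~\ref{sec_defs} twice to $\int_\Omega (\nabla \times \nabla \times v) \cdot w \, dx$, and using $\hat L_0 v = 0$ for $v \in \mathcal{S}_\Omega$ together with $\hat L_0 w = \phi_0$, yields
\begin{equation*}
\int_D v \cdot \phi \, dx \;=\; \int_{\p\Omega} (\nu \times v) \cdot (\nabla \times w) \, dS + \int_{\p\Omega} (\nu \times \nabla \times v) \cdot w \, dS,
\end{equation*}
and the second boundary term vanishes because $\nu \times w = 0$ forces $w$ to have no tangential component on $\p\Omega$, while $\nu \times \nabla \times v$ is tangential. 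As $v$ ranges over $\mathcal{S}_\Omega$, the tangential trace $\nu \times v$ ranges over $W_1$, so the vanishing of the left-hand side forces $\nu \times \nu \times (\nabla \times w)|_{\p\Omega}$ to be $L^2(\p\Omega)^3$-orthogonal to $W_1$; a parallel integration by parts against each $\psi \in \mathcal{E}$, combined with $\phi_0 \perp \mathcal{E}$, places the same trace also in $\{(\nabla \times \psi)|_{\p\Omega} : \psi \in \mathcal{E}\}^\perp$, hence in $U = W_1^\perp \cap X$. Thus $\phi$ satisfies the defining conditions of $\mathcal{F}$, so $(u,\phi)_{L^2(D)^3} = 0$ for every $u \in \mathcal{S}_D$. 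The main obstacle is the bookkeeping of two finite-dimensional obstructions: the finite-codimension subspace $W_1 \subset X$ imposed by the monotonicity inequality must dovetail with the Fredholm kernel $\mathcal{E}$ of the source problem, and the definition of $\mathcal{F}$ is engineered precisely to make these conditions match; one must also verify that the initial Helmholtz reduction is compatible with the extension by zero, which is why the potential $p$ is required to vanish on $\p D$.
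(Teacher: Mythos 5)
Your proposal follows essentially the same route as the paper: a duality argument in $L^2(D)^3$ (the paper phrases it as a double orthogonal complement inside the closed subspace $Z=L^2(D;\operatorname{div}0)$ rather than via Hahn--Banach, but in a Hilbert space these are the same), the same preliminary Hodge reduction to divergence-free test functions, and the same core computation: solve the source problem \eqref{eq_bvp_w} with datum $\phi_0$, integrate by parts twice against $v\in\mathcal{S}_\Omega$, kill one boundary term using $\nu\times w|_{\p\Omega}=0$, and read off that the surviving pairing of $\nu\times v\in W_1$ with the tangential part of $\nabla\times w$ must vanish, which is the defining condition of $\mathcal{F}$.

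One step is mis-justified. You discard the $\mathcal{E}$-component of $\phi_0$ on the grounds that it ``by design lies in $\mathcal{F}$''; it does not, since membership in $\mathcal{F}$ requires $F_0\perp\mathcal{E}$, and a nonzero element of $\mathcal{E}$ is not orthogonal to $\mathcal{E}$ (nor is its restriction to $D$ obviously annihilated by $\mathcal{S}_D$, whose definition only imposes orthogonality to $\mathcal{F}$). Moreover the $L^2(\Omega)^3$-projection of $\phi_0$ onto $\mathcal{E}$ is in general not supported in $D$, so the subtraction takes you outside the class of test functionals on $D$. The repair is easy and makes the subtraction unnecessary: since $0\in W_1$, every $\psi\in\mathcal{E}$ satisfies $\hat L_0\psi=0$ with $\nu\times\psi|_{\p\Omega}=0\in W_1$, hence $\mathcal{E}\subset\mathcal{S}_\Omega$, and the hypothesis $\phi\perp\mathcal{S}_\Omega|_D$ already forces $(\phi_0,\psi)_{L^2(\Omega)^3}=(\phi,\psi|_D)_{L^2(D)^3}=0$ for all $\psi\in\mathcal{E}$. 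So the Fredholm condition needed to define $S\phi$ holds automatically, and the rest of your argument goes through. (Your closing remark that pairing against $\psi\in\mathcal{E}$ places the trace in $(\nabla\times\mathcal{E})^\perp$ also does not work as stated, since with $\nu\times w=\nu\times\psi=0$ both boundary terms vanish identically; but this only affects the cosmetic ``$\cap\,X$'' in the membership $\nu\times\nu\times(\nabla\times S\phi)|_{\p\Omega}\in U$, a point the paper itself passes over.)
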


\begin{proof}
Denote the restrictions of elements of $\mathcal{S}_\Omega$ by
$$
\mathcal{S}_\Omega|_D := \{ u |_D \,:\, u \in \mathcal{S}_\Omega \}.
$$
Notice firstly that
$$
Z : =L^2(D;\operatorname{div}0) \subset L^2(D)^3,
$$
is a  closed subspace. This follows e.g. from the Hodge decomposition
$L^2(D)^3 = \operatorname{Ker}(\operatorname{div}) + \nabla H^1_0(\Omega)$, which is orthogonal.
See Theorem 10' p.54 in \cite{Ce96}.
It will be convenient to use the notation
$$
u \perp_Z v \qquad \Leftrightarrow \qquad u \in Z \text{ and } v \in Z \text{ are orthogonal in } L^2(D)^3.
$$
Now consider  $F \in (\mathcal{S}_\Omega|_D)^{\perp_Z}$, 
and let  $\varphi|_D \in \mathcal{S}_\Omega|_D$.
Define $w \in H^1(\Omega)^3$ as the weak solution to the source problem
\begin{align}  \label{eq_bvp_w}
\begin{cases}
\hat L_0 u_F &=  F_0 , \quad \text{ in } \quad \Omega, \\
\nu \times u_F|_{\p \Omega} &= 0.
\end{cases}
\end{align}
The existence of the solution $u_F$ follows from Lemma \ref{lem_maxwell_source3} and the assumption that $\omega$
is not a resonance frequency.
Now since $F \in (\mathcal{S}_\Omega|_D)^{\perp_Z}$, and since both $u_F$ and $\varphi$ are solutions on $\Omega$,
and since $\nu \times u_F|_{\p \Omega} = 0$,
we have that
\begin{align*} 
0 = (F,\varphi|_D)_{L^2(D)^3} 
&= - \int_{\Omega} \nabla \times u_F \cdot \nabla \times \varphi - \omega^2 \frac{\rho_0}{\mu_0} u_F \cdot \varphi\,dx 
- \int_{\p \Omega} \nu \times ( \nabla \times u_F) \cdot \varphi  \, dx  \\
&= \big ( \nu \times \nu \times ( \nabla \times SF), \, \nu \times \varphi \big)_{L^2(\p \Omega )^3},
\end{align*}
where $SF = u_F$, and $S$ is the source-to-solution operator
of the problem \eqref{eq_bvp_w} and where we used the vector decomposition $\varphi =(\varphi \cdot \nu)\nu - \nu \times \nu \times \varphi$
in the last step.
Using the definition of $\mathcal{S}_\Omega$ we rewrite this as
\begin{align*} 
\nu \times \nu \times ( \nabla \times SF) \perp \nu \times \varphi,\;\forall \varphi \in \mathcal{S}_\Omega
&\quad \Leftrightarrow \quad
\nu \times \nu \times ( \nabla \times SF) \in W_1^\perp \cap X = U. %\\
% &\quad \Leftrightarrow \quad
% \nu \times \nu \times ( \nabla \times SF) \in \tilde V^{\perp \perp} = \tilde V.
\end{align*}
Thus we have that 
\begin{small}
$$
F \in (\mathcal{S}_\Omega|_D)^{\perp_Z}  \; \Leftrightarrow \;   
F \in \spn \big\{ F \in L^2(D;\operatorname{div}0)^3 \,:\, \nu \times \nu \times ( \nabla \times SF) \in U \big\}
=\mathcal{F}.
$$
\end{small}
Hence for $u \in H^1(D)^3 \cap Z$ we have, because $\mathcal{S}_\Omega|_D \subset Z$ that
\begin{align*} 
u \in \operatorname{cl}_{L^2(D)^{3}}(\mathcal{S}_\Omega|_D) 
= u \in \operatorname{cl}_{Z}(\mathcal{S}_\Omega|_D) =(\mathcal{S}_\Omega|_D)^{\perp_Z \perp_Z} 
\;\Leftrightarrow\; u \perp_Z (\mathcal{S}_\Omega|_D)^{\perp_Z}
\;\Leftrightarrow\; u \perp \mathcal{F},
\end{align*}
where $\operatorname{cl}_{Y}$ stands for the closure corresponding to  the function space $Y$. This proves the claim.
\end{proof}

\noindent
We will need the following observation.

\begin{lem} \label{lem_U_fin}
We have that 
$$
\dim (U)  < \infty,
$$
where $U = \dim (W_1^\perp \cap X)$.
\end{lem}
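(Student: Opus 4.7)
The plan is to combine the decomposition $X = W_0 + W_1$ from Lemma \ref{lem_X_split} with the finite dimensionality of $W_0$ recorded in \eqref{eq_dim_W0}, using that $W_1 \subset X \subset H^{1/2}_t(\p \Omega) \subset L^2(\p \Omega)^3$ and that the orthogonal complement defining $U$ is taken in $L^2(\p \Omega)^3$.

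Concretely, pass to the quotient space $X / W_1$ via the canonical projection $\pi : X \to X / W_1$. By Lemma \ref{lem_X_split}, every $x \in X$ decomposes as $x = w_0 + w_1$ with $w_0 \in W_0$ and $w_1 \in W_1$, so $\pi(x) = \pi(w_0)$. Hence $\pi(X) = \pi(W_0)$, which yields the dimension bound $\dim(X/W_1) \leq \dim(W_0) < \infty$ by \eqref{eq_dim_W0}.

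The key step is to verify that the restriction $\pi|_U$ is injective. Suppose $x \in U$ satisfies $\pi(x) = 0$. Then $x \in W_1$; combined with $x \in W_1^\perp$ (orthogonality in $L^2(\p \Omega)^3$), this forces $(x,x)_{L^2(\p \Omega)^3} = 0$, so $x = 0$. Consequently
\begin{equation*}
\dim(U) \;\leq\; \dim\bigl(\pi(U)\bigr) \;\leq\; \dim(X/W_1) \;\leq\; \dim(W_0) \;<\; \infty.
\end{equation*}

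The one subtle point is that the sum in Lemma \ref{lem_X_split} is not asserted to be direct, so one cannot simply project $X$ onto a complementary subspace of $W_1$ inside $X$. The quotient construction circumvents this, and the $L^2$-orthogonality built into the definition of $U$ is exactly what prevents any dimension from collapsing when passing from $U$ into the finite dimensional quotient $X/W_1$.
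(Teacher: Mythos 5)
Your proof is correct and rests on the same two ingredients as the paper's: the decomposition $X = W_0 + W_1$ of Lemma \ref{lem_X_split} and the finite dimensionality of $W_0$ recorded in \eqref{eq_dim_W0}. The only difference is presentational --- where you pass to the quotient $X/W_1$ and check that $U \to X/W_1$ is injective, the paper writes each $b \in U$ explicitly as $b = w_0 - \operatorname{Pr}_{\overline{W}_1} w_0$ with $w_0 \in W_0$, which is the same observation that $L^2$-orthogonality to $W_1$ determines the $W_1$-component from the $W_0$-component.
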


\begin{proof}
Let $b \in W_1^\perp \cap X$.
By Lemma \ref{lem_X_split} we can write 
$
b = w_1 + w_0,
$
with $w_0 \in W_0$ and $w_1 \in W_1$.
Since $b \in W_1^\perp = \overline W_1^\perp$, we have that
$$
w_1 = - \operatorname{Pr}_{\overline{W}_1} w_0,  \qquad\text{ and } \qquad 
b =  w_0 - \operatorname{Pr}_{\overline{W}_1} w_0.
$$
Since $\dim(W_0) = M < \infty$, we have a basis $\{ \phi_k \}_{k=1}^M$ of $W_0$.
Let $w_0 = \sum_{k=1}^M c_k \phi_k$. It follows that
$$
b =   w_0 - \sum_{k=1}^M c_k \operatorname{Pr}_{\overline{W}_1} \phi_k.
$$
The set 
$$
\{ \phi_1, \dots ,\phi_M , \operatorname{Pr}_{\overline{W}_1} \phi_1, \dots ,  \operatorname{Pr}_{\overline{W}_1} \phi_M \}
$$
is thus a basis of $W_1^\perp \cap X$, and $\dim(W_1^\perp \cap X) \leq 2M$.
\end{proof}

\noindent
We now prove the existence of a solution $w$ in the set $\mathcal{S}_D$ with zero divergence,
that we need in the proof of the  localization result in Proposition  \ref{prop_loc_div0}.
% We obtain such as solution from the following lemma.

\begin{lem} \label{lem_blowupCand_div0}
Let $D_1$ and $D_2$ be as in \eqref{eq_Dassump} and 
$\mathcal{S}_D$ as in Lemma \ref{lem_approxCrit}. There exists a solution 
$w \in \mathcal{S}_D$, with $ \nabla \cdot w = 0$ and  $w=0$ in $D_1$, and 
$$
\| w \|_{ L^2(D_2)^3 } \neq 0, \qquad 
\| \hat \nabla w \|_{ L^2(D_2)^{3\times 3} } \neq 0.
% \;\| \nabla \cdot w \|_{ L^2(D_2)} \neq 0.
$$
\end{lem}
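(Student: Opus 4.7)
The plan is to exploit $D_1\cap D_2=\emptyset$: since $D_1$ and $D_2$ are disjoint open sets, their union $D$ is disconnected, and a function in $H^1(D)^3$ may be prescribed independently on each component. I would therefore set $w\equiv 0$ on $D_1$ (which trivially gives $\hat L_0 w = 0$, $\nabla\cdot w = 0$, and the vanishing on $D_1$) and reduce the task to finding a divergence-free Helmholtz solution on $D_2$ with nonzero $L^2$- and strain-norms whose extension by zero lies in $\mathcal{S}_D$, i.e., is $L^2(D)^3$-orthogonal to $\mathcal{F}$.

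As explicit candidates on $D_2$ I would take real plane waves
\begin{equation*}
w_{a,\xi}(x) := a\cos(\xi\cdot x),\qquad |\xi|^2 = \omega^2\rho_0/\mu_0,\ a\in\R^3,\ a\cdot\xi = 0,\ a\neq 0.
\end{equation*}
Each $w_{a,\xi}$ solves $\tilde L_0 w_{a,\xi} = 0$ and is divergence free, hence also satisfies $\hat L_0 w_{a,\xi} = 0$ since the two operators coincide on divergence-free fields. A direct computation gives the symmetric gradient $\hat\nabla w_{a,\xi}(x) = -\tfrac12(\xi\otimes a + a\otimes\xi)\sin(\xi\cdot x)$, which, together with the obvious non-vanishing of $a\cos(\xi\cdot x)$ on the open set $D_2$, supplies the required non-triviality of both norms. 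Extending each such $w_{a,\xi}$ by zero to $D_1$ yields a candidate $\tilde w_{a,\xi}\in H^1(D)^3$ with all the desired properties, except a priori the orthogonality $\perp\mathcal{F}$.

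The main obstacle is securing $\tilde w_{a,\xi}\perp\mathcal{F}$ in $L^2(D)^3$. The span $\mathcal{P}$ of the extensions-by-zero $\tilde w_{a,\xi}$ is infinite-dimensional as $(a,\xi)$ ranges over its admissible parameter set, whereas the definition of $\mathcal{F}$ is controlled by the finite-dimensional spaces $\mathcal{E}$ and $U = W_1^\perp \cap X$, the latter finite by Lemma \ref{lem_U_fin}. Dualizing the pairing $(\tilde w_{a,\xi},F)_{L^2(D)^3}$ through the source-to-solution operator $S$ of \eqref{eq_bvp_w} and integrating by parts should recast it in terms of boundary data living in $U$ and eigenmode components in $\mathcal{E}$, so that $\perp\mathcal{F}$ imposes only finitely many independent linear conditions on $(a,\xi)$. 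Consequently $\mathcal{P}\cap\mathcal{F}^\perp$ remains infinite-dimensional, and a nonzero element in it will, for a generic choice of parameters avoiding accidental cancellation on $D_2$, retain the required non-triviality of $\|w\|_{L^2(D_2)^3}$ and $\|\hat\nabla w\|_{L^2(D_2)^{3\times 3}}$. I expect this finite-codimension count for $\mathcal{F}\cap\mathcal{P}$ inside $\mathcal{P}$ to be the technically delicate step; the remaining verifications (the equation, the divergence-free property, and the prescribed vanishing on $D_1$) are built into the construction itself.
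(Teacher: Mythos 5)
Your overall strategy is the same as the paper's: set $w\equiv 0$ on $D_1$ (legitimate since $D_1\cap D_2=\emptyset$), produce a nontrivial divergence-free solution on $D_2$, and argue that membership in $\mathcal{S}_D$, i.e.\ orthogonality to $\mathcal{F}$, is only a finite-codimension constraint on your candidate family. Your one genuine variation is to parametrize the candidates on $D_2$ by explicit shear plane waves $a\cos(\xi\cdot x)$ with $a\perp\xi$, $|\xi|^2=\omega^2\rho_0/\mu_0$, rather than by tangential boundary data $f$ on $\p D_2$ as the paper does. This is a pleasant simplification of the first half of the argument: your candidates are global solutions restricted to $D_2$, so you sidestep the paper's ``Condition 1'' (orthogonality of $f$ to $\nabla\times\mathcal{E}_D$ needed for solvability of the boundary value problem on $D_2$) and you get the non-vanishing of $w$ and $\hat\nabla w$ on $D_2$ by direct computation instead of by appeal to Proposition \ref{prop_div0_sol}.

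The gap is exactly where you flag it, and it is not merely ``technically delicate'' bookkeeping: the claim that $\perp\mathcal{F}$ imposes only finitely many independent conditions on your span $\mathcal{P}$ does not follow from the finite-dimensionality of $\mathcal{E}$ and $U$ by dualization alone, because $\mathcal{F}$ itself is a priori infinite-dimensional and the pairing $(\tilde w_{a,\xi},F)_{L^2(D)^3}$, after integration by parts on $D_2$, is a functional of the Cauchy data $\bigl(\nu\times u_F,\ \nu\times(\nabla\times u_F)\bigr)$ of $u_F=SF$ on $\p D_2$ --- an interior surface, not $\p\Omega$, so the condition $\nu\times\nu\times(\nabla\times SF)|_{\p\Omega}\in U$ does not directly bound anything there. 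The missing ingredient is the unique continuation step of the paper: since $\nu\times SF|_{\p\Omega}=0$ and $\nu\times\nu\times(\nabla\times SF)|_{\p\Omega}$ ranges over the finite-dimensional $U$ (Lemma \ref{lem_U_fin}), any $F\in\mathcal{F}$ whose trace data vanish in $U$ has full zero Cauchy data on $\p\Omega$, hence $SF\equiv 0$ on $\Omega\setminus D_2$ by unique continuation for the Maxwell-type system; this is what forces $\spn\{SF|_{\Omega\setminus D_2}:F\in\mathcal{F}\}$, and therefore the space of Cauchy data on $\p D_2$, to be finite-dimensional. Without invoking unique continuation (or an equivalent argument) your finite-codimension count for $\mathcal{F}^\perp\cap\mathcal{P}$ is unsupported, and the proof is incomplete at its decisive step. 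With that lemma inserted, your plane-wave version goes through and is arguably cleaner on the construction side.
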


\begin{proof} 
Let $D = D_1 \cup D_2$. We will show that we can pick the required $w \neq 0$ as
$$
\hat L_0 w_{ f} = 0,\quad \text{ in } D,
$$
where we set $w_{ f} \equiv  0$, in $D_1$, so that trivially
\begin{align}  \label{eq_wBVP0}
\hat L_0 w_{ f} = 0 \quad \text{ in } D_1,\qquad 
\nu \times  w_{ f} = 0\quad  \text { on } \p D_1,
\end{align}
and in $D_2$ we let $w_{ f}$ solve the boundary value problem
\begin{align}  \label{eq_wBVP}
 \hat L_0 w_{ f} = 0 \quad \text{ in } D_2,\qquad 
 \nu \times  w_{ f} = { f}\quad  \text { on } \p D_2.
\end{align}
We derive two conditions on the boundary value ${ f} \in H^{1/2}_t(\p D_2)$,
which will guarantee that
the desired properties hold and then show that these conditions can be satisfied.

\medskip
\noindent
\textit{Condition 1.} 
It might happen that zero is an eigenvalue of the eigenvalue problem corresponding to
\eqref{eq_wBVP}, we thus require that 
\begin{align}  \label{eq_condN}
{ f} \perp  \nabla \times \mathcal{E}_{D}
\end{align}
where $\mathcal{E}_D$ is the finite dimensional subspace corresponding to a possible zero eigenvalue,
given by Lemma \ref{lem_maxwell_bc3}.
Note that this guarantees that we have a unique solution, when restricting the boundary data suitably, 
and that we can define the boundary data map as 
$$
\hat \Lambda:  f \mapsto \nu \times (\nabla \times w_{ f})|_{\p D_2} , \qquad
\hat \Lambda: ( \nabla \times \mathcal{E}_D)^\perp \cap H_t^{1/2}(\p D_2) \to H_t^{1/2}(\p D_2).
$$

\medskip
\noindent
\textit{Condition 2.} 
The second condition on ${ f}$ will guarantee that $w_{ f} \in \mathcal{S}_D$.
In order to obtain a $w_{ f} \in \mathcal{S}_D$, we know by  the definition of $\mathcal{S}_D$ in
Lemma \ref{lem_approxCrit} that it is enough
to pick a boundary condition ${ f}$ such that 
\begin{equation} \label{eq_wf_ortho}
\begin{aligned}
w_{ f} \perp \mathcal{F}.
\end{aligned}
\end{equation}
The first step is to rewrite this as a condition on the boundary value ${ f}$. 
Suppose that this condition holds for $w_{ f}$ solving \eqref{eq_wBVP0}--\eqref{eq_wBVP}.
Notice that we can choose $w_f \perp \mathcal{E}_D$, since $w_f$ is unique modulo elements in $\mathcal{E}_D$.
It follows that the above condition is
$$
0 = (w_f, F)_{L^2(D_2)^3} \quad \Leftrightarrow \quad 0 = (w_f, \operatorname{Pr}_{\mathcal{E}_D^\perp}F)_{L^2(D_2)^3}.
$$
We can thus assume that $F \perp \mathcal{E}_D$. Now if \eqref{eq_wf_ortho} holds, then
for any $F \in \mathcal{F}$ and corresponding $u_F$ that solves \eqref{eq_bvp_w}, we have that
\begin{align*} 
0 = (w_{ f}, F)_{L^2(D)^3} = \int_{D_2} \nabla \times w_{ f} \cdot \nabla \times u_F 
- \omega^2 \frac{\rho_0}{\mu_0} w_{ f} \cdot u_F \, dx 
+ \int_{\p D_2} w_{ f}\cdot \nu \times (\nabla \times u_F ) \,dS
\end{align*}
where we used the fact that $u_F$ restricted to $D_2$ is a solution in $D_2$ to the boundary value problem 
with the source term $F$.
Since $w_{ f}$ solves \eqref{eq_wBVP} on $D_2$, the above equation implies that
\begin{equation} \label{eq_tildeLambda_eq}
\begin{aligned}
0 =  \big(   f  ,\, \nu \times (\nabla \times u_F) \big)_{L^2(\p D_2)^3} 
- \big( \nu \times \nu \times u_F ,\, \hat \Lambda  f \big)_{L^2(\p D_2)^3}.
\end{aligned}
\end{equation}
The boundary data map $\hat \Lambda$ is 
symmetric
% \footnote{ \textcolor{blue}{Note however that $\hat \Lambda$ is not bounded on $L^2$ and
% it is thus not a bounded self-adjoint operator on $L^2$.}} 
in the sense that
$$
\big( \hat \Lambda \phi ,\, \nu \times \psi \big)_{L^2(\p D_2)^3}
= \big( \nu \times \phi ,\, \hat \Lambda \psi \big)_{L^2(\p D_2)^3}, 
\qquad \forall \psi,\phi \in \mathcal{E}_D^\perp \cap H^{1/2}_t(\p D_2).
$$
We want to apply this to the above equation.
%and hence we need to check that  $\nu \times u_F |_{\p D_2} \in \mathcal{E}_D^\perp$.
Hence we need to check that  $\hat \Lambda(\nu \times u_F)$ is defined. 
This follows if we can show that \eqref{eq_wBVP}, has a solution when $f = \nu \times u_F$.
To this end note that 
\begin{align*}  %\label{eq_wBVP}
 \hat L_0 v_F = F  \quad \text{ in } D_2,\qquad 
 \nu \times v_F = 0 \quad  \text { on } \p D_2,
\end{align*}
has a solution by Lemma \ref{lem_maxwell_source3}, since $F \perp \mathcal{E}_D$
But now $v = u_F - v_F$ 
\begin{align*}  %\label{eq_wBVP}
 \hat L_0 v = 0  \quad \text{ in } D_2,\qquad 
 \nu \times v =  \nu \times u_F \quad  \text { on } \p D_2.
\end{align*}
So that $\hat \Lambda (\nu \times u_F) = \nu \times (\nabla \times v)$.
Thus we can use the above symmetry property and write \eqref{eq_tildeLambda_eq} in terms of an operator $T$ as 
$$
0 =  \big(  f, \,  \nu \times (\nabla \times u_F )  - \nu \times \hat \Lambda ( \nu \times u_F) \big)_{L^2(\p D_2)^3}
=: \big(  f ,\, T u_F  \big)_{L^2(\p D_2)^3}.
$$
We have moreover that $u_F = SF$, where $S$  is the source-to-solution operator of \eqref{eq_bvp_w}. 
It follows that ${ f}$ should satisfy the constraint 
\begin{align}  \label{eq_condW}
 f \perp \mathcal{W} \,:=\, \spn \{\, T(SF)  \,:\, F \in \mathcal{F} \},
\end{align}
We now show that 
$
\dim{\mathcal{W}}< \infty.
$
It is enough to show that
\begin{align}  \label{eq_Sv_dim}
N:= \dim \big(\spn \{ SF |_{\Omega\setminus D_2} \,:\, F \in \mathcal{F}\} \big) < \infty,
\end{align}
since $\mathcal{W}$ can be obtained from this by a linear map.
Note firstly that because $F \in \mathcal{F}$, we have  that
\begin{align*} 
\nu \times \nu \times ( \nabla \times SF )|_{\p \Omega}  \in U,
\end{align*}
so that the trace  $\nu \times \nu \times ( \nabla \times  \cdot )|_{\p \Omega}$ here maps into $U$.
By Lemma \ref{lem_U_fin} we know that $\dim(U) =  2M < \infty$.
Suppose that $N > 2M$. Then we have an $SF|_{\Omega\setminus D_2} \neq 0$, 
such that 
$$
 \nu \times \nu \times ( \nabla \times SF )|_{\p \Omega}= 0
\quad \Rightarrow \quad
 \nu \times ( \nabla \times SF )|_{\p \Omega}= 0,
$$
since the trace map is linear. Moreover $SF$ has a second zero boundary condition, since by \eqref{eq_bvp_w},
we have that  $\nu \times SF|_{\Gamma_N} = 0$. Thus both
$$
\nu \times SF|_{\Gamma_N} = 0, \qquad
\nu \times ( \nabla \times SF )|_{\Gamma_N}= 0.
$$
The unique continuation principle (see for instance Theorem 2.2 in \cite{HLL18}) gives now that $SF \equiv 0$,
in $\Omega \setminus D_2$ which is a contradiction. Thus  $\dim{\mathcal{W}}< \infty$.

\medskip
\noindent
The requirement on $ f \in H_t^{1/2}(\p D_2)$  is that
$$
f \;\perp\;  \nabla \times \mathcal{E}_D + \mathcal{W}.
$$
This is a finite dimensional constraint and can thus be satisfied by some $ f \neq 0$.
We see that $w_{ f}$ satisfies the desired properties, since  $w_{ f} \equiv 0$ in $D_1$, and 
by Proposition \ref{prop_div0_sol} we have that
$ \nabla \cdot w_{  f} |_{D_2} = 0$ and $ \hat \nabla w_{f} |_{D_2} \neq 0$.
\end{proof}

\section{Linearized monotonicity test} \label{sec_mono_shape}

\noindent
In this section we will derive the main Theorem underlying our linearized monotonicity tests.
We will consider inhomogeneities in the material parameters of the following form.
We let $D_1, D_2, D_3 \Subset \Omega$, and assume that $\lambda,\mu, \rho \in L_+^\infty(\Omega)$ 
are such that
\begin{equation} \label{eq_lambdaMuRho}
\begin{aligned} 
\lambda &= \lambda_0 + \chi_{D_1} \psi_\lambda , \qquad \psi_\lambda \in L^\infty(\Omega),
\quad \psi_\lambda > \delta, \\
\mu &= \mu_0 + \chi_{D_2} \psi_\mu, \qquad \psi_\mu \in L^\infty(\Omega),\quad \psi_\mu > \delta, \\
\rho &= \rho_0 - \chi_{D_3} \psi_\rho, \qquad \psi_\rho \in L^\infty(\Omega),\quad \rho_0 - \delta > \psi_\rho > \delta, 
\end{aligned}
\end{equation}
where the constants $\lambda_0,\mu_0,\rho_0 > 0$ and $\delta > 0$.
The coefficients $\lambda,\mu$ and $\rho$  model inhomogeneities in an otherwise homogeneous background medium
determined by $\lambda_0,\mu_0$ and $\rho_0$.

\noindent
The linearized monotonicity method is based on the following theorem.

\begin{thm} \label{thm_Lin_inclusionDetection}
Let $D := D_1 \cup D_2 \cup D_3$ where the sets are as in \eqref{eq_lambdaMuRho} and
$B \subset \Omega$  and $\alpha_j > 0$, 
and set $\alpha:=(\alpha_1,\alpha_2,\alpha_3)$. Let
$$
\mathcal{N} := \# \big\{\sigma \in \spec(\Lambda_0 + \Lambda'_0[\alpha_1,\alpha_2,-\alpha_3] - \Lambda)
\;:\; \sigma < 0 \big\},
$$
where $\Lambda_0$ and $\Lambda$ are the NtD-maps for the coefficients $\lambda_0,\mu_0,\rho_0$ and $\lambda,\mu,\rho$ respectively,
and where $\Lambda'_0[\alpha_1,\alpha_2,-\alpha_3] := \Lambda'_0[\alpha_1\chi_B,\alpha_2\chi_B,-\alpha_3\chi_B]$. 
There exists a $\gamma_0 > 0$ such that the following holds: \\
\begin{enumerate}

\item 
Assume that  $B \subset D_j$, for $j \in I$, for some $I \subset\{1,2,3\}$.  
Then for all $\alpha_j$ with $\alpha_j \leq \gamma_0$, $j \in I$, and $\alpha_j = 0$, 
$j \notin I$, we have that  $\mathcal{N} < \infty$.
\\

\item
If $B \not \subset \osupp(D)$, then for all $\alpha$, $|\alpha| \neq 0$, $\mathcal{N} = \infty$.\\

\item 
Assume $\rho = \rho_0$, and that $\Gamma_N = \p \Omega$.
If $B \not \subset \osupp(\mu)$, and we choose
$\alpha = (0,\alpha_2,0) \neq 0$, then  $\mathcal{N} = \infty$.
\\

\end{enumerate}
\end{thm}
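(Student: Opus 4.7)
I will study the compact self-adjoint operator
\[
M := \Lambda_0 + \Lambda'_0[\alpha_1,\alpha_2,-\alpha_3] - \Lambda,
\]
which is compact by Lemma \ref{lem_compact_self_adjoint} together with the compactness of $\Lambda_0,\Lambda$. I use the standard characterization: $\mathcal{N}(M) < \infty$ is equivalent to the existence of a finite dimensional $W \subset L^2(\Gamma_N)^3$ with $(Mg,g)\ge 0$ on $W^\perp$, while $\mathcal{N}(M) = \infty$ follows once, for every finite dimensional $V_0$, one can exhibit $g \perp V_0$ with $(Mg,g)$ arbitrarily negative (a min--max against the span of negative eigenvectors then yields infinitely many negatives).

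For parts (2) and (3) I combine the lower monotonicity estimate of Lemma \ref{lem_monotonicity_ineq1} with the formula of Lemma \ref{lem_Frechet} to obtain, on $g \in V^\perp$,
\begin{align*}
(Mg,g) &\leq \int_\Omega 2\chi_{D_2}\psi_\mu|\hat\nabla u_0|^2 + \chi_{D_1}\psi_\lambda|\nabla\cdot u_0|^2 + \omega^2\chi_{D_3}\psi_\rho|u_0|^2\,dx\\
&\quad - \int_B 2\alpha_2|\hat\nabla u_0|^2 + \alpha_1|\nabla\cdot u_0|^2 + \omega^2\alpha_3|u_0|^2\,dx.
\end{align*}
In (2), since $B\not\subset \osupp(D)$, there is a nonempty open $B' \subset B \setminus \osupp(D)$. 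I pick auxiliary $\tilde D_1, \tilde D_2 \subset \Omega$ satisfying \eqref{eq_Dassump} with $\osupp(D) \Subset \tilde D_1$ and $\tilde D_2 \Subset B'$, enlarge $V$ by any prescribed $V_0$, and apply Proposition \ref{prop_loc2} to produce $g_j$ along which $u_0,\hat\nabla u_0,\nabla\cdot u_0$ vanish in $L^2$ on $\tilde D_1' \supset D$ and all blow up on $\tilde D_2' \subset B$. The first integral then tends to $0$, the second to $+\infty$ (since some $\alpha_i > 0$), and $(Mg_j,g_j) \to -\infty$. For (3), $\rho \equiv \rho_0$ kills $D_3$ and $\alpha = (0,\alpha_2,0)$ kills the $\alpha_1,\alpha_3$ terms; I replace Proposition \ref{prop_loc2} by the zero divergence localization of Proposition \ref{prop_loc_div0} (whence the hypothesis $\Gamma_N = \p\Omega$), choose $\tilde D_1 \supset \osupp(\mu)$ and $\tilde D_2 \Subset B \setminus \osupp(\mu)$ (nonempty by hypothesis), and note that $\nabla\cdot u_0 \equiv 0$ annihilates the $\lambda$-integrand in the upper bound, reducing it to
\[
(Mg,g) \leq 2\int_{D_2}\psi_\mu|\hat\nabla u_0|^2\,dx - 2\alpha_2 \int_B |\hat\nabla u_0|^2\,dx,
\]
which the localization again drives to $-\infty$. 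This is precisely where the divergence free construction of Sections \ref{sec_loc2}--\ref{sec_runge} is used.

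For part (1) I switch direction and use Lemma \ref{lem_ab_1}, which rearranged and combined with Lemma \ref{lem_Frechet} gives
\begin{align*}
(Mg,g) &\geq \int_\Omega 2\chi_{D_2}\psi_\mu|\hat\nabla u|^2 + \chi_{D_1}\psi_\lambda|\nabla\cdot u|^2 + \omega^2\chi_{D_3}\psi_\rho|u|^2\,dx\\
&\quad - \int_B 2\alpha_2|\hat\nabla u_0|^2 + \alpha_1|\nabla\cdot u_0|^2 + \omega^2\alpha_3|u_0|^2\,dx - \omega^2\int_\Omega \rho_0|u-u_0|^2\,dx.
\end{align*}
Using $|\hat\nabla u_0|^2 \leq 2|\hat\nabla u|^2 + 2|\hat\nabla(u-u_0)|^2$ and its analogues, together with the hypotheses $B \subset D_j$ for $j \in I$ and $\alpha_j = 0$ for $j\notin I$, I pick $\gamma_0$ of order $\delta$ (the lower bound on the $\psi$'s) so that the $\alpha$-integrand on $B$ is absorbed into the $\psi$-integrand on $D_j$. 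What is left is a residue of the form $-C\|u-u_0\|_{H^1(\Omega)}^2$. Lemma \ref{lem_w_source} represents $u-u_0$ as the solution of a source problem whose data is supported in $D$ and linear in $u_0|_D$; Lemma \ref{lem_ab_2} and Proposition \ref{prop_elipEst_1} then bound $\|u-u_0\|_{H^1(\Omega)}$ by $\|u_0\|_{H^1(D)}$, and since $D \Subset \Omega$ and $u_0$ solves a homogeneous equation, interior elliptic regularity makes $g \mapsto u_0|_D$ compact into $H^1(D)^3$. A careful absorption/Gårding step using this compactness turns the residue into a finite rank negative perturbation and produces a finite codimensional $W^\perp$ on which $(Mg,g) \ge 0$, yielding $\mathcal{N}(M) < \infty$.

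The main obstacle is part (1). Lemma \ref{lem_monotonicity_ineq1} by itself cannot be combined with the Fréchet derivative to give a lower bound on $M$, since both are expressed in $u_0$ and nothing cancels; one must use the $u$-sided inequality of Lemma \ref{lem_ab_1}, which pays the price of the extra term $\omega^2\|\rho_0^{1/2}(u-u_0)\|_{L^2}^2$ arising from the zero order density. Unlike the stationary case of \cite{EH21}, where this term is absent, here it must be handled via Proposition \ref{prop_elipEst_1} and the interior regularity of the background solution $u_0$, and this is also why the final threshold $\gamma_0$ is less explicit than in the $\omega = 0$ setting.
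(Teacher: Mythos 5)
Your parts (2) and (3) follow the paper's argument essentially verbatim: the upper bound from Lemma \ref{lem_monotonicity_ineq1} combined with \eqref{bilinear_Frechet}, then the localized sequences of Proposition \ref{prop_loc2} (resp.\ Proposition \ref{prop_loc_div0}, with $\nabla\cdot u_0=0$ annihilating the $\lambda$-term) orthogonal to any prescribed finite dimensional space, driving the form to $-\infty$ and contradicting the characterization of $\mathcal N<\infty$ via Lemma 6.1 of \cite{EP24}. No issues there.

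Part (1), however, has a genuine gap, and it stems from a misreading. You claim Lemma \ref{lem_monotonicity_ineq1} cannot give a usable lower bound because ``both are expressed in $u_0$.'' Applied with $\Lambda_2=\Lambda_0$, $\Lambda_1=\Lambda$, $u_1=u$, the lemma gives
$\big((\Lambda_0-\Lambda)g,g\big)\geq\int_\Omega 2(\mu-\mu_0)|\hat\nabla u|^2+(\lambda-\lambda_0)|\nabla\cdot u|^2+\omega^2(\rho_0-\rho)|u|^2\,dx$
for $g\in V^\perp$ — a lower bound in the \emph{perturbed} solution $u$, with no error term, already restricted to a finite codimensional subspace. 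The paper then converts the $u$-integrals over the $D_j$ into $u_0$-integrals (up to a constant) via the elliptic estimate of Proposition \ref{prop_elipEst_1} for $w=u-u_0$ plus the triangle inequality, and absorbs the Fréchet-derivative term for $\alpha_j\leq\gamma_0$. By abandoning this and using Lemma \ref{lem_ab_1} instead, you import the residue $-\omega^2\int_\Omega\rho_0|u-u_0|^2\,dx$, whose coefficient is not small (the elliptic estimate bounds it by $C\|(\bar\lambda,\bar\mu,\bar\rho)\|_{L^\infty}^2$ times the $D_j$-norms, and the contrast is not assumed small), so it cannot be absorbed into the positive part. Your proposed fix — that compactness of $g\mapsto u_0|_D$ ``turns the residue into a finite rank negative perturbation'' — does not work: compact is not finite rank, and $M\geq A-K$ with $K$ compact, nonnegative and infinite rank is perfectly consistent with $M$ having infinitely many negative eigenvalues (take $M=-K$). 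Approximating $K$ by finite rank operators leaves an error $-\epsilon\|g\|^2_{L^2(\Gamma_N)}$ that the compact operator $M$ can never dominate. The finite dimensional exceptional space must come from the structure of Lemma \ref{lem_monotonicity_ineq1} itself, which is exactly how the paper closes the argument; your route as written does not.
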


\begin{proof}
We start by proving part $(1)$ of the claim.
It will be convenient to use the abbreviations 
$$
\bar \lambda := \lambda - \lambda_0
\qquad \bar \mu := \mu - \mu_0,
\qquad \bar \rho := \rho_0 - \rho,
$$
Now consider $w := u - u_0$. We see similarly as in Lemma \ref{lem_ab_1} that $w$ is a weak solution of 
the form \eqref{eq_weak3} of 
\begin{align*}  %\label{eq_bvp2}
\begin{cases}
\nabla \cdot (\C_0\,  \hat \nabla w )  + \omega^2\rho_0 w  &=  
\nabla \cdot (\C_{\bar \lambda, \bar \mu}\,  \hat \nabla u )  + \omega^2 \bar \rho u, \\
\;\quad\quad\quad\quad\quad(\gamma_{\C_0} w ) |_{\Gamma_N} &= ( \gamma_{ \C_{\bar \lambda, \bar \mu}} u ) |_{\Gamma_N}, \\	
 \quad\quad\quad\quad\quad\quad \quad w |_{\Gamma_D} &= 0,	
\end{cases}
\end{align*}
where $\C_0$ corresponds to $\lambda_0$ and $\mu_0$.
From the estimate in Proposition \ref{prop_elipEst_1}  with $A = \C_{\bar \lambda, \bar \mu}\,  \hat \nabla u$ 
and $F = \omega^2 \bar \rho u$, we easily see that
\begin{small}
\begin{align*}
\| w \|_{ H^1(\Omega)^3} 
\leq 
C \big( 
\| \lambda - \lambda_0\|_{L^\infty }\| \nabla \cdot u \|_{ L^2(D_1) }
+ \| \mu - \mu_0 \|_{L^\infty }\| \hat \nabla u\|_{ L^2(D_2)^{3\times 3}}
+ \| \rho_0- \rho \|_{L^\infty }\| u \|_{ L^2(D_3)^3 }
\big).
\end{align*}
\end{small}

% \medskip
% NEW END 
%
Next using this and the triangle inequality we have that
\begin{small}
\begin{align*} 
\int_{D_2}  2\bar \mu |\hat \nabla u_0 |^2\,dx 
\leq
 2 \int_{D_2}  2 \bar \mu (|\hat \nabla w |^2 +  |\hat \nabla u |^2)\,dx 
\leq C
\big( 
&\| \bar \lambda \|_{L^\infty }\| \nabla \cdot u \|^2_{ L^2(D_1) } 
+ \| \bar \mu \|_{L^\infty } \| \hat \nabla u\|^2_{ L^2(D_2)} \\
&+\| \bar \rho \|_{L^\infty }\| u \|^2_{ L^2(D_3)^3 }
\big).
\end{align*}
\end{small}
We obtain similarly, that
\begin{align*} 
\int_{D_1}  \bar \lambda |\nabla \cdot u_0 |^2\,dx 
\leq 
C \big( 
\| \bar \lambda \|_{L^\infty }\| \nabla \cdot u \|^2_{ L^2(D_1) } 
+\| \bar \mu \|_{L^\infty } \| \hat \nabla u\|^2_{ L^2(D_2)^{3\times 3}} 
+\| \bar \rho \|_{L^\infty }\| u \|^2_{ L^2(D_3)^3 }
\big),
\end{align*}
and that
\begin{align*} 
\int_{D_3} \bar \rho | u_0 |^2\,dx 
\leq C 
\big( 
\| \bar \lambda \|_{L^\infty }\| \nabla \cdot u \|^2_{ L^2(D_1) } 
+\| \bar \mu \|_{L^\infty } \| \hat \nabla u\|^2_{ L^2(D_2)^{3\times 3}} 
+\| \bar \rho \|_{L^\infty }\| u \|^2_{ L^2(D_3)^3 }
\big).
\end{align*}
We define the $\gamma_0' > 0$ as
$$
\gamma_0' := \min \Big \{ 
\min_{x \in D_2 } \frac{\bar \mu(x)}{\|\bar \mu \|_{L^\infty(D_2)}},\; 
\min_{x \in D_1 } \frac{\bar \lambda(x) }{\| \bar \lambda \|_{L^\infty(D_1) }},\;
\min_{x \in D_3 }  \frac{\bar \rho(x) }{\| \bar  \rho \|_{L^\infty(D_3) }}
\Big \} > 0,
$$
where the estimate holds because of \eqref{eq_lambdaMuRho}.
Using the inequality of Lemma \ref{lem_monotonicity_ineq1} and the three previous inequalities, 
and that  
$$
 \lambda - \lambda_0,\;  \mu - \mu_0,\;  \rho_0 - \rho \geq 0 \quad \text{ in }B,
$$
we obtain 
\begin{align}
\label{eq_mono_applied}
\big(  (\Lambda_0 - \Lambda)g, \,g  \big )_{L^2(\Gamma_N)^3} 
&\geq
\int_\Omega  2(\mu-\mu_0 ) |\hat \nabla u |^2 + (\lambda - \lambda_0 ) |\nabla \cdot u|^2 + \omega^2(\rho_0-\rho) |u|^2 \,dx \nonumber \\
&\geq 
\gamma_0' \big( 
\| \bar \lambda \|_{L^\infty }\| \nabla \cdot u \|^2_{ L^2(D_1) } 
+ \| \bar \mu  \|_{L^\infty }\| \hat \nabla u\|^2_{ L^2(D_2)^{3\times 3}}
+ \| \bar \rho \|_{L^\infty }\| u \|^2_{ L^2(D_3)^3 } 
\big) \\
&\geq
C \int_\Omega  2(\mu-\mu_0 ) |\hat \nabla u_0 |^2 + (\lambda - \lambda_0 ) |\nabla \cdot u_0|^2  + \omega^2(\rho_0-\rho) |u_0|^2 \,dx, 
\nonumber
\end{align}
with  some $C>0$ and $g \in V^\perp$.
From this and \eqref{bilinear_Frechet}  we get that
\begin{align*}  %\label{eq_mono1}
\big(  (\Lambda_0  + \Lambda_0'[\alpha_1,\alpha_2,-\alpha_3] - \Lambda)g, \,g  \big )_{L^2(\Gamma_N)^3} 
&\geq
C \int_\Omega  2(\mu-\mu_0 ) |\hat \nabla u_0 |^2 + (\lambda - \lambda_0 ) |\nabla \cdot u_0|^2  \,dx \\
&\quad+ \int_\Omega \omega^2(\rho_0-\rho) |u_0|^2 \,dx \\
&\quad - \int_\Omega  \alpha_2 \chi_B |\hat \nabla u_0 |^2 + \alpha_1 \chi_B |\nabla \cdot u_0|^2  - \alpha_3 \chi_B |u_0|^2 \,dx. 
\end{align*}
It follows that
\begin{align}  \label{eq_part1_pos}
\big(  (\Lambda_0  + \Lambda_0'[\alpha_1,\alpha_2,-\alpha_3] - \Lambda)g, \,g  \big )_{L^2(\Gamma_N)^3}  \geq 0, \quad g \in V^\perp,
\end{align}
provided that we set $\alpha_j = 0$, when $B \not \subset D_j$,
and that 
\begin{align*}  %\label{eq_mono1}
\alpha_1 \leq C\min_{D_1} (\lambda - \lambda_0 ), \quad
\alpha_2 \leq 2C\min_{D_2} (\mu-\mu_0 ),\quad
\alpha_3 \leq C\omega^2\min_{D_3}(\rho_0-\rho),
\end{align*}
if $B \subset D_j$.
We set $\gamma_0$ to be the minimum non-zero value of the right hand sides of the above minimums.
% The first part of the claim follows now from 
The inequality of \eqref{eq_part1_pos} implies by Lemma 6.1 in \cite{EP24} that $\mathcal{N} < \infty$.
This proves the first part of the claim.

\medskip
\noindent
Next we prove part (2) of the claim. 
We can assume that $B \cap D = \emptyset$, by considering a subset of $B$ if needed.
Assume that the claim is false and that $\mathcal{N} < \infty$, so that 
$(\Lambda_0 + \Lambda'_0[\alpha_1,\alpha_2,-\alpha_3] - \Lambda)$
has finitely many negative eigenvalues.
By Lemma 6.1 in \cite{EP24} we have a finite dimensional subspace $V_1 \subset L^2(\Gamma_N)$, such that  
\begin{equation} \label{eq_pos}
\begin{aligned}
\big((\Lambda_0 + \Lambda'_0[\alpha_1,\alpha_2,-\alpha_3] - \Lambda) g, \, g\big)_{L^2(\Gamma_N)^3}
\geq  0,  \qquad g \in V_1^\perp.
\end{aligned}
\end{equation}
We will obtain a contradiction from this.
From Lemma \ref{lem_monotonicity_ineq1} we get that
\begin{align} \label{eq_upper_part2}
\big( (\Lambda_0 + \Lambda'_0[\alpha_1,\alpha_2,-\alpha_3] - \Lambda) g, \,g  \big)_{L^2(\Gamma_N)^3} 
&\leq
\int_\Omega (\mu - \mu_0- \alpha_2 \chi_B ) | \hat \nabla u_0|^2 \,dx \nonumber \\ 
&\quad +\int_\Omega (\lambda - \lambda_0 - \alpha_1 \chi_B ) | \nabla \cdot u_0|^2 \,dx \\ 
&\quad +\int_\Omega \omega^2(\rho_0 - \rho - \alpha_3 \chi_B ) |  u_0|^2 \,dx \nonumber 
% &\quad - \int_\Omega   2 \alpha_1 \chi_B |\hat \nabla u_0 |^2 + \alpha_2 \chi_B |\nabla \cdot u_0|^2  \\ 
% &\quad \qquad + \omega^2 \alpha_3 \chi_B |u_0|^2 \,dx. 
\end{align}
where $u_0$ solves \eqref{eq_bvp1} with coefficients given by  $\lambda_0, \mu_0$ and $\rho_0$, and the  
boundary condition $g \in V_2^\perp$, where $V_2$ is a finite dimensional subspace. 
The  terms on right  hand side of \eqref{eq_upper_part2} can be split as 
\begin{align} \label{eq_mu_term} 
\int_\Omega (\mu - \mu_0- \alpha_2 \chi_B ) | \hat \nabla  u_0|^2 \,dx  
=
\int_{D_2} (\mu - \mu_0) | \hat \nabla  u_0|^2 \,dx  
- \int_{B} \alpha_2 \chi_B | \hat \nabla  u_0|^2 \,dx 
\end{align}
and 
\begin{align} \label{eq_lambda_term} 
\int_\Omega (\lambda - \lambda_0- \alpha_1 \chi_B ) | \nabla  \cdot u_0|^2 \,dx  
=
\int_{D_1} (\lambda - \lambda_0) |  \nabla \cdot u_0|^2 \,dx  
- \int_{B} \alpha_1 \chi_B | \hat \nabla  u_0|^2 \,dx 
\end{align}
and 
\begin{align} \label{eq_rho_term} 
\int_\Omega \omega^2(\rho_0 - \rho - \alpha_3 \chi_B ) |  u_0|^2 \,dx 
=
\int_{D_3} \omega^2(\rho_0 - \rho ) |  u_0|^2 \,dx 
- \int_{B} \omega^2\alpha_3 \chi_B  |  u_0|^2 \,dx 
\end{align}
We now use localized solutions that become large in the set $B$ and small in $D$.
By choosing the sets $D'_1$ and $D'_2$ in Proposition \ref{prop_loc2}, as  $D'_1 = \osupp(D)$ and $D'_2 = B$,
and suitable sets $D_1$ and $D_2$,
we get a sequence $g_j  = (\gamma_\C u_{0,j})|_{\p\Omega} \in (V_1 \cup V_2 )^\perp$ of
boundary data that give the  solutions $u_{0,j}$ to \eqref{eq_bvp1}, with the coefficients
$\lambda_0, \mu_0$ and $\rho_0$, such that
$$
\| u_{0,j} \|_{ L^2(\osupp(D))^3 }, 
\;\| \hat \nabla u_{0,j} \|_{ L^2(\osupp(D))^{3\times 3} }, 
\;\| \nabla \cdot u_{0,j} \|_{ L^2(\osupp(D))} 
\to 0, 
$$
and such that
$$
\| u_{0,j} \|_{ L^2(B)^3 }, 
\;\| \hat \nabla u_{0,j} \|_{ L^2(B)^{3\times 3} }, 
\;\| \nabla \cdot u_{0,j} \|_{ L^2(B)} 
\to \infty, 
$$
as $j \to \infty$.
If $|\alpha| \neq 0$, then it  follows from these limits and equations \eqref{eq_mu_term}, 
\eqref{eq_lambda_term} and \eqref{eq_rho_term}, that \eqref{eq_upper_part2} gives the estimate 
\begin{align*} 
\big( (\Lambda_0 + \Lambda'_0[\alpha_1,\alpha_2,\alpha_3] - \Lambda) g_j, \,g_j  \big)_{L^2(\Gamma_N)^3} < 0, 
\end{align*}
when $j$ is large enough, with $g_j \in (V_1 \cup V_2 )^\perp \subset V_1 ^\perp$. 
But this is in contradiction with \eqref{eq_pos}. Part $(2)$ of the claim thus holds.

\medskip
\noindent
Finally we prove part (3) of the claim. 
We can assume that $B \cap \osupp(\mu - \mu_0) = \emptyset$, by considering a subset of $B$ if needed.
We again assume that the claim is false and that there is a finite dimensional subspace $V_1 \subset L^2(\Gamma_N)^3$
such that \eqref{eq_pos} holds. Consider $u_0$, with $ \nabla \cdot u_0 = 0$.
By using Lemma \ref{lem_monotonicity_ineq1} we have that 
\begin{align} \label{eq_upper_part3}
\big( (\Lambda_0 + \Lambda'_0[ 0,\alpha_2,0] - \Lambda) g, \,g  \big)_{L^2(\Gamma_N)^3} 
&\leq
\int_\Omega (\mu - \mu_0- \alpha_2 \chi_B ) | \hat \nabla u_0|^2 \,dx, % \nonumber \\ 
% &\quad +\int_\Omega \omega^2(\rho - \rho_0 + \alpha_3 \chi_B ) |  u_0|^2 \,dx 
\end{align}
where $u_0$ solves \eqref{eq_bvp1} with coefficients given by  $\lambda_0, \mu_0$ and $\rho_0$, and the  
boundary condition $g \in V_2^\perp$, where $V_2$ is a finite dimensional subspace. The $\mu$ 
term on right  hand side  of \eqref{eq_upper_part3} can be written as   
\begin{align} \label{eq_mu_term_3} 
\int_\Omega (\mu - \mu_0- \alpha_2 \chi_B ) | \hat \nabla  u_0|^2 \,dx  
=
\int_{D_2} (\mu - \mu_0) | \hat \nabla  u_0|^2 \,dx  
- \int_{B} \alpha_2 \chi_B | \hat \nabla  u_0|^2 \,dx. 
\end{align}
We now use the localized solutions $u_{0,j}$ given by Proposition \ref{prop_loc_div0}.
By choosing the sets $D'_1$ and $D'_2$ in Proposition \ref{prop_loc_div0}, as  $D'_1 = \osupp(\mu - \mu_0)$ and $D'_2 = B$,
and suitable sets $D_1$ and $D_2$,
we get a sequence of boundary data $g_j \in V_2^\perp$, where $V_2$ is a finite dimensional subspace,
for which $ \nabla \cdot u_{0,j} = 0$, and
$$
% \| u_{0,j} \|_{ L^2( \osupp(\mu -\mu_0) )^3 } \to 0, 
% \qquad 
\| \hat \nabla u_{0,j} \|_{ L^2( \osupp(\mu -\mu_0))^{3\times 3} } 
\to 0, 
$$
as $j \to \infty$, and  
$$
% \| u_{0,j} \|_{ L^2(B)^3 } \to  \infty,
% \qquad 
\| \hat \nabla u_{0,j} \|_{ L^2(B)^{3\times 3} } 
\to \infty, 
$$
as $j \to \infty$, 
where $u_{0,j}$ solves \eqref{eq_bvp1}, with the boundary conditions $g_j$ and the coefficients 
$\lambda_0, \mu_0$ and $\rho_0$.
% For $\alpha_1 \neq$ or $\alpha_3 \neq 0$ we get from these limits and equations \eqref{eq_mu_term_3} and \eqref{eq_rho_term_3}, with
For $\alpha_2 \neq 0$   we get from these limits and equation \eqref{eq_mu_term_3}, with
\eqref{eq_upper_part3} that
\begin{align*} 
\big( (\Lambda_0 + \Lambda'_0[0,\alpha_2,0] - \Lambda) g_j, \,g_j  \big)_{L^2(\Gamma_N)^3} 
< 0, 
\end{align*}
when $j$ is large enough. 
But this is in contradiction with \eqref{eq_pos}, and we have proved part (3) of the claim.
\end{proof}

\subsection{Reconstructing  $\osupp(D)$ from $\Lambda$} \label{sec_reconstruct_D}
We can use parts (1) and (2) of Theorem \ref{thm_Lin_inclusionDetection} to reconstruct
$\osupp(D)$.
The linearized monotonicity method that we present here for recovering the set
$\osupp(D)$, is similar to the reconstruction for procedure in \cite{EP24} for the full Neumann-to-Dirichlet 
maps. In this subsection we review how to use Theorem \ref{thm_Lin_inclusionDetection} for the reconstruction
of $\osupp(D)$.
Recall that $D := D_1 \cup D_2 \cup D_3$ where the sets are as in \eqref{eq_lambdaMuRho}.
% The algorithm to reconstruct $\osupp(D)$, is described in \cite{EP24}.

\medskip
\noindent
To determine $\osupp(D)$ one uses conditions $(1)$ and $(2)$ of Theorem \ref{thm_Lin_inclusionDetection}.
Algorithm \ref{alg_shapeInclusion} gives the general outline of the reconstruction procedure
for $\osupp(D)$.
% Algorithm \ref{alg_shapeInclusion}
The aim is to generate a collection $\mathcal A$ of subsets of $\Omega$, 
such that $\cup \mathcal{A}$ approximates the set $\osupp(D)$. This 
gives an approximation of the shape of the inhomogeneous region $D$, disregarding any internal cavities.
Algorithm \ref{alg_shapeInclusion} works roughly by choosing a collection of subsets
$\mathcal{B} = \{ B \subset \Omega\}$, and building an approximating collection $\mathcal{A}$ 
by choosing $B$, such that $B \subset \osupp(D)$. An approximation  of $\osupp(D)$ is then obtained
as $\cup \mathcal{A}$, in which we  fill any internal cavities, where the algorithm might have given an indeterminate result.
Note that in Algorithm \ref{alg_shapeInclusion} $\Lambda$ denotes 
the measured Neumann-to-Dirichlet map, and $\Lambda_0$ the  Neumann-to-Dirichlet map
of the constant background, and its $\Lambda'_0[\alpha_1,\alpha_2,-\alpha_3]$ Fréchet derivative
where the $\alpha_j$ are test parameters related to a test set $B$, as in Theorem \ref{thm_Lin_inclusionDetection}.

\begin{algorithm} 
\caption{Linearized reconstruction of  $\osupp(D) \subset \Omega$.}\label{alg_shapeInclusion}
\begin{algorithmic}[1]

\STATE  Choose a collection of sets $\mathcal{B} = \{ B \subset \Omega\}$.

\STATE  Set the approximating collection $\mathcal{A} = \{\}$.

% \STATE  Compute $M_0$ using Theorem  \ref{thm_Lin_inclusionDetection}

\FOR{  $B \in \mathcal{B}$}

\FOR{  $\Lambda^\flat$ with parameters varied as suggested by Theorem \ref{thm_Lin_inclusionDetection}}

\STATE Compute $N_B := \sum_{\sigma_k < 0} 1$, where $\sigma_k$ are the eigenvalues of 
$\Lambda_0 + \Lambda'_0[\alpha_1,\alpha_2,-\alpha_3] - \Lambda$ 

\IF {$ N_B < \infty$}

\STATE 
Add $B$ to the approximating collection $\mathcal{A}$, since 
Theorem \ref{thm_Lin_inclusionDetection} suggests 
\STATE 
that $B \subset \osupp(D)$. 

\ELSE

\STATE Discard $B$, since by Theorem \ref{thm_Lin_inclusionDetection} $B \not \subset D_j$, $j=1,2,3$.

\ENDIF
\ENDFOR
\ENDFOR

\STATE  Compute the union of all elements in $\mathcal{A}$ and all components of 
$\Omega \setminus \cup \mathcal{A}$ not connected 
\STATE to $\p\Omega$. The resulting set is an approximation of $\osupp(D)$.

\end{algorithmic}
\end{algorithm}

\begin{rem}
Some further remarks concerning algorithm \ref{alg_shapeInclusion} are:
\begin{itemize}

\item The linearized shape reconstruction method of  algorithm \ref{alg_shapeInclusion} 
is significantly faster to compute than the procedure in \cite{EP24}.
The speedup is mainly due to that $\Lambda_0 + \Lambda'_0[\alpha_1,\alpha_2,-\alpha_3]- \Lambda$ 
is, for different $\alpha_j$:s and $B$, much faster to compute than 
$\Lambda_{\alpha_1,\alpha_2,\alpha_3}- \Lambda$, that was needed in \cite{EP24}, 
because of formula \eqref{bilinear_Frechet}.\\

\item It should be possible to replace the condition $N_B < \infty$, as in \cite{EP24},
with a finite bound $N_B < M_0$, for some  $M_0 \in \R_+$.  We have however not investigated
this further here.\\ %See also remark \ref{rem_M0}. \\

\item  
Note that one counts the eigenvalues of $\Lambda_0 + \Lambda'_0[\alpha_1,\alpha_2,-\alpha_3]- \Lambda$ 
in algorithm \ref{alg_shapeInclusion} as many times as its multiplicity indicates. \\

\end{itemize}

\end{rem}

\subsection{Reconstructing $\osupp(\mu - \mu_0)$ from $\Lambda$.} \label{sec_mu_recons} 
We can use part (3) of Theorem \ref{thm_Lin_inclusionDetection} to further
improve on the reconstruction we obtain by the method described in the previous section.
Here we will describe how to reconstruct $\osupp(\mu - \mu_0)$.
It will be convenient to set 
$$
\bar \mu := \mu - \mu_0, 	\qquad  \bar \lambda := \lambda - \lambda_0,
$$
where $\lambda$ and  $\mu$ are of the form given by equation \eqref{eq_lambdaMuRho}.
We further assume in this section that
$$
\rho = \rho_0, \qquad  \Gamma_N = \p \Omega.
$$
%
% One could  also consider the case $\osupp(\rho) \cap (\osupp(\mu) \cup \osupp(\lambda)) = \emptyset$,
% but we refrain from doing so.
The first condition says that only the Lamé parameters are perturbed and that 
the background density is constant. The second
condition says that the Dirichlet portion of the boundary is empty. 
It follows that
$$
\osupp(D) = \osupp \big( \supp(\bar \mu) \cup  \supp(\bar\lambda) \big).
$$
We can again reconstruct $\osupp(D)$ by the procedure in subsection \ref{sec_reconstruct_D}.
We will now see how one can reconstruct
the set $\osupp(\bar\mu)$ utilizing part $(3)$ of Theorem \ref{thm_Lin_inclusionDetection}.
Note however that we cannot identify the set $\osupp(\bar \lambda)$, 
unless we know apriori that $\supp(\bar\mu) \cap \supp(\bar \lambda ) \neq \emptyset$. 
% More precisely we cannot tell if 
% $\osupp(\lambda) \subset \osupp(\mu) \cup \osupp(\lambda)$ is a proper subset or not.
Instead we recover only  $\osupp(D) \setminus \osupp(\bar\mu)$.

\medskip
\noindent
To reconstruct $\osupp( \bar \mu)$ we can use essentially the same procedure as for reconstructing $\osupp(D)$
that is outlined in algorithm \ref{alg_shapeInclusion}, 
in section \ref{sec_reconstruct_D}.
One replaces $\osupp(D)$ with $\osupp(\bar \mu)$, and 
performs a similar testing procedure which proceeds as follows.

Let $B \subset \Omega$ be ball. 
To test if $B \subset \osupp(\bar \mu)$ or not, one chooses $\alpha_1=\alpha_3 = 0$,
and $0<\alpha_2 < \gamma_0$, so that  $\alpha = (0,\alpha_2,0)$ in Theorem \ref{thm_Lin_inclusionDetection}.
We can then compute the $\mathcal N$ appearing in Theorem \ref{thm_Lin_inclusionDetection}.
The corresponding $\mathcal{N}$ is either such that $\mathcal{N} = \infty$ or $\mathcal{N} < \infty$.
In the case $\mathcal{N} < \infty$, we know  by part (3) of Theorem \ref{thm_Lin_inclusionDetection},
that it is not the case that $B \not \subset \osupp(\bar \mu)$,
and we classify $B$ as $B \subset \osupp(\bar \mu)$, which is a reasonable approximation for small $B$. 
If  $\mathcal{N} = \infty$  we know by part (1)
of Theorem \ref{thm_Lin_inclusionDetection} that $B \not \subset D_2 = \supp(\bar \mu)$, 
so that  $B$ intersects the complement of $\osupp(\bar \mu)$ or an internal 
cavity\footnote{An internal cavity is here understood as a component 
of $\Omega \setminus \supp(\bar \mu)$ that is not connected to boundary $\p \Omega$.}
of $\supp(\bar \mu)$.
In both cases we  classify $B$ as lying outside $\osupp(\bar \mu)$. 
Notice that if $B$ intersects an internal cavity, then it can be miss classified as lying outside
$\osupp(\bar \mu)$. This 
could leave  cavities in the reconstruction of $\osupp(\bar \mu)$. This can be easily 
corrected, since the set $\osupp(D)$ does not allow for any internal cavities and these can be removed
at the end.  

\section{Numerical simulations} \label{sec_numerics}

\noindent
In this section we deal with different settings and configurations to numerically test the
developed results in the case $\rho=\rho_0$. In more detail, we introduce two different
examples: in the first example (see Subsection \ref{sep_inclu}) we take a look
at an elastic cube with two separated inclusions (cubes), where only one
parameter $\lambda$ or $\mu$ differs from the background and in the second
example we consider two intersecting inclusions (see Subsection
\ref{int_inclu}) so that we have an inclusion only in $\lambda$, an inclusion
only in $\mu$ and in the intersection of both an inclusion in both $\lambda$
and $\mu$.
The material parameters of the elastic body are given in Table \ref{Table_parameter}.
\renewcommand{\arraystretch}{1.4}
\begin{table}[h!]
\begin{center}
\begin{tabular}{ |c|c| c | c |}  
\hline
material & $\lambda_i$ & $\mu_i$  & $\rho_i$ \\
\hline
$i=0$: background &  $6\cdot 10^5$   &  $6\cdot 10^{3}$  &  $3\cdot 10^3$\\
 \hline
$i=1$: inclusion &  $2\cdot 10^{6}$ &  $2\cdot 10^{4}$  &   $3\cdot 10^3$\\
\hline
\end{tabular}
\end{center}
\caption{Lam\'e parameter $\lambda$ and $\mu$  in [$Pa$] and density $\rho$ in [$kg/m^3$].}
\label{Table_parameter}
\end{table}
\noindent
In the following, we examine the frequency
$\omega=50\frac{rad}{s}$. The corresponding p-wave and s-wave wavelengths\footnote{The
s-wavelength and p-wavelength are defined via $l_p=2\pi\dfrac{v_p}{k}$ and
$v_s=2\pi\dfrac{v_s}{k}$ with the velocities
$v_p=\sqrt{\dfrac{\lambda_0+2\mu_0}{\rho_0}}$ and
$v_s=\sqrt{\dfrac{\mu_0}{\rho_0}}$} are $l_p=1.70$ m and $l_s=0.18$ m. If not
otherwise mentioned, we  divide each face of the cube into $10\times 10$
patches resulting in 600 Neumann patches (see Figure \ref{patches_10x10}). On
each, we apply boundary loads in all three directions (normal and tangential
directions) resulting in 1800 different boundary loads. Further, the Dirichlet
boundary $\Gamma_D=\emptyset$.

\begin{figure}[h]
\centering
\includegraphics[scale=0.4]{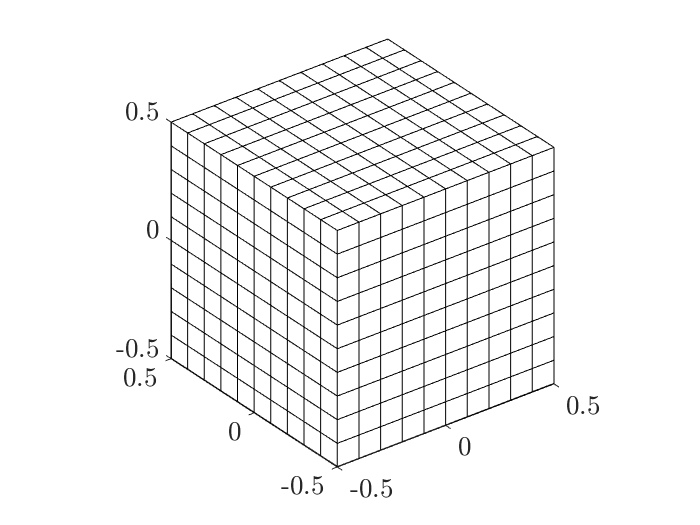}
\caption{Illustration of the Neumann boundary consisting of $600$ patches ($10\times 10$ on each surface).}\label{patches_10x10}
\end{figure}
\noindent
\\
We follow the algorithms form Subsection \ref{sec_reconstruct_D} and
\ref{sec_mu_recons}. Recall from Subsection \ref{sec_mono_shape} that
$$
D_1 = \supp(\lambda - \lambda_0), \qquad 
D_2 = \supp(\mu - \mu_0).
$$
Furthermore, we use the following color-coding for the reconstructions:
\begin{itemize}
\item red: test with $\alpha_1= C(\lambda-\lambda_0)=18760$ and $\alpha_2=\alpha_3=0$ resulting in the reconstructed set $D=D_1\cup D_2$
\item blue: test with $\alpha_2=2C(\mu-\mu_0)= 375.2$ and $\alpha_1=\alpha_3=0$ resulting in the reconstructed set $ D_2$
\item green: set difference $D\setminus D_2$
\end{itemize}
\noindent
Here, the constant $C=0.0134$ was determined experimentally and remains
unchanged over all tests in this chapter regardless of the test model, the
number of boundary loads or the number of test inclusions. 

\smallskip
\noindent
Algorithm 1 from Subsection \ref{sec_reconstruct_D} determines, if a test block
lies inside $D=D_1\cup D_2$, if the number of negative eigenvalues $N_B$ of
$\Lambda_0+\Lambda_0’ [\alpha_1,\alpha_2,-\alpha_3]-\Lambda$ is finite. However,
since the numerical setup can only handle finite data, this is true for all
test inclusions by default. Hence, we choose a threshold $\tilde{M}_1$ and
accept a test inclusion $B$ to be inside the inclusion, if $N_B\leq
\tilde{M}_1$ similarly as in \cite{EP24}. We do the same for the test
regarding $D_2$ by choosing $N_B\leq \tilde{M}_2$.

\subsection{Separated inclusions}\label{sep_inclu}
The model we consider is given on the left in Figure \ref{test_model}, 
where the magenta inclusion stands for $\mu$ and the yellow
one for $\lambda$. All in all, the model consists of two blocks with block
centers aligned on the diagonal from $\frac{1}{2}(-1,-1,-1)^T$ to
$\frac{1}{2}(1,1,1)^T$.

\begin{figure}[h]
\centering
\includegraphics[scale=0.2]{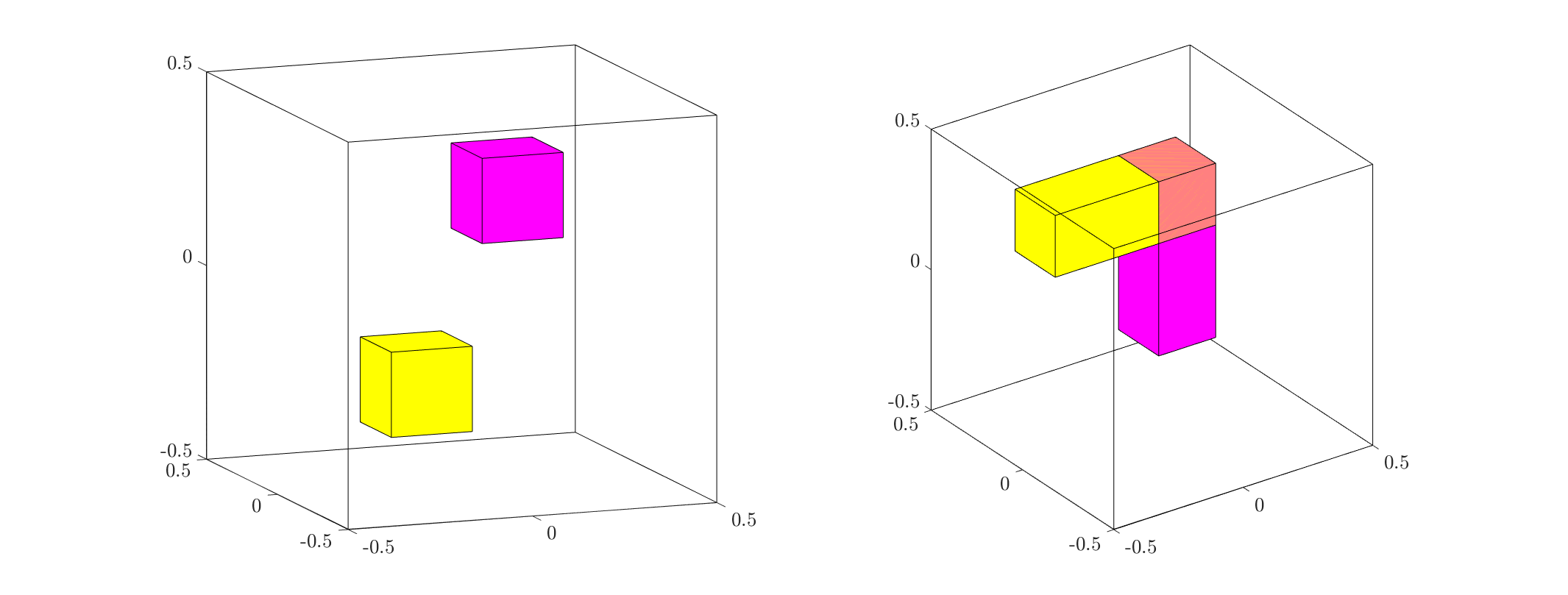}
\caption{Test model with two separated inclusions (left) and intersecting rods (right). Magenta: $D_2\setminus D_1$, yellow $D_1\setminus D_2$ and orange the inclusion $D_1\cap D_2$.}\label{test_model}
\end{figure}
\noindent
We perform the testing with $1000$ ($10\times 10\times 10$) test cubes.
As can be seen, we can reconstruct both inclusions via our choice of
$\tilde{M}_1=408$ as shown in Figure \ref{cubes_50_Hz_1000_blocks} (first row:
left). Since the choice on first glance seems to be arbitrary, we motivate it
by taking a look at $N_B$ for each of the $1000$ test blocks depicted in Figure
\ref{cubes_50_Hz_1000_blocks} (second row: left). It can clearly be seen, that
the number of negative eigenvalues $N_B$ of the marked test inclusions differs
significantly from those test blocks outside of the inclusions, which seem all
to be contained inside a certain range regardless of location. The same can be
said for the reconstruction of $D_2$ depicted in Figure
\ref{cubes_50_Hz_1000_blocks} (first row: middle). The choice of $\tilde{M}_2=410$ is again motivated by a look at the
eigenvalue plot in Figure \ref{cubes_50_Hz_1000_blocks} (second row, right). All
in all, the set difference $D\setminus D_2$ results in the
reconstruction of the set $D_1$ as depicted in Figure
\ref{cubes_50_Hz_1000_blocks} (first row, right).

\begin{figure}[h]
\centering
\includegraphics[scale=0.25]{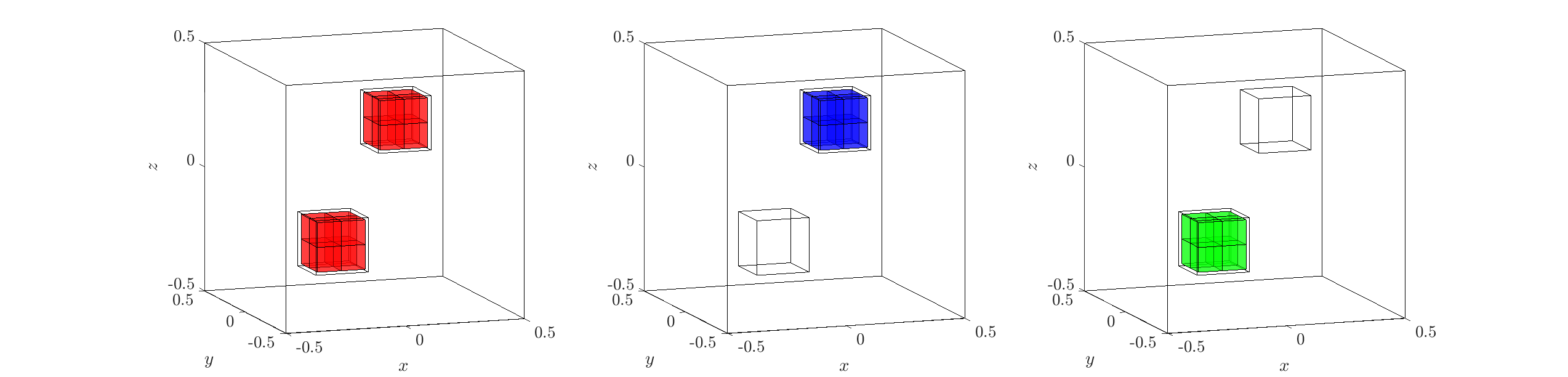}\\
\includegraphics[scale=0.3]{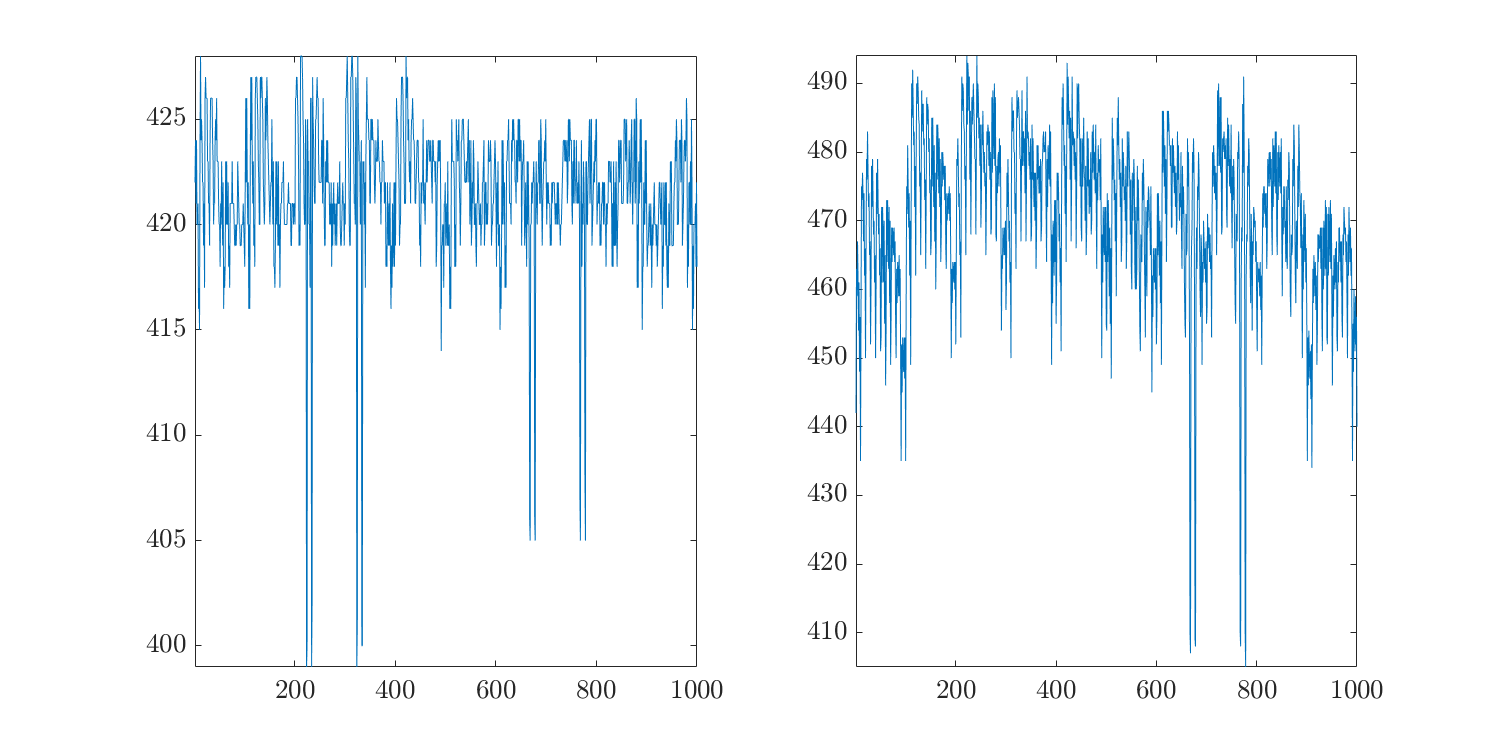}
\caption{We consider the case $\omega = 50 \frac{rad}{s}$ and $1000$ test
blocks. First row: Reconstruction of the inclusions: left: $D=D_1\cup D_2$ for
$\tilde{M}_1 = 408$ (red), middle: $D_2$ for $\tilde{M}_2=410$, right:
$D\setminus D_2$. Second row: Plot of the number of negative eigenvalues for
each test block: left: $D=D_1\cup D_2$, right:
$D_2$.}\label{cubes_50_Hz_1000_blocks}
\end{figure}

\begin{rem}
It should be noted, that choosing a singular $M=410$ instead of separate
$\tilde{M}_1$, $\tilde{M}_2$ would also result in the same reconstruction.
\end{rem}

\subsection{Intersecting inclusions}\label{int_inclu}
Next, we deal with two intersecting inclusions (two rods) as depicted in Figure \ref{test_model}, right.
\\
\\
Figure \ref{overview} shows the reconstruction of $D_1\cup D_2$ for $1000$ test
blocks and $\omega= 50\ \frac{rad}{s}$. It can be seen that all test blocks in
the inclusion are marked correctly, however several test blocks were marked as
inside the inclusion not contained in $D$. Those blocks lie either
in the nook between the two rods, or between boundary and inclusion. This seems
to be a common trait for the elastic monotonicity methods and was already
observed in \cite{EH22} as well as \cite{EH23}. This can be handled by taking
more boundary loads into account in order to better discretize the operators
$\Lambda$ and $\Lambda_0$.

\begin{figure}[h]
\centering
\includegraphics[scale=0.25]{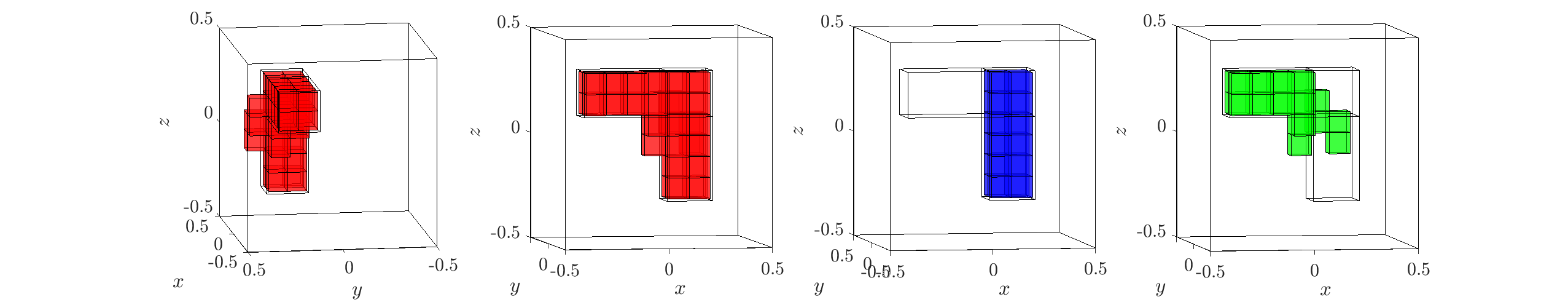}
\caption{Reconstruction for $\omega = 50 \frac{rad}{s}$ with $1000$ test
blocks. $D=D_1\cup D_2$ and $\tilde{M}_1 = 255$ (red), $D_2 $ and $\tilde{M}_2
= 267$ (blue) and $D_1\setminus D_2$ (green).}\label{overview}
\end{figure}
\noindent
The reconstruction of $D_2$ in Figure \ref{overview} is based on
part (3) of Theorem \ref{thm_Lin_inclusionDetection}. This results in a correct reconstruction of $D_2$. All in
all, taking the set difference results in the set $D\setminus D_2$ plus the
blocks assigned wrongly in Figure \ref{overview}.

\subsection{Normal and tangential boundary loads}\label{diff_boundary}
As a last test, we take a look at only taking boundary loads in normal or
tangential direction into account. The reason for this is simple. By reducing
the amount of boundary loads, a lab experiment can be set up easier, we reduce
the computation time significantly and applying only normal loads is physically
easier than applying tangential loads. As a test model, we take the two
separate inclusions (Figure \ref{test_model}, left) and $1000$ test inclusions,
as well as $\omega=50 \frac{rad}{s}$.

\begin{figure}[h]
\centering
\includegraphics[scale=0.25]{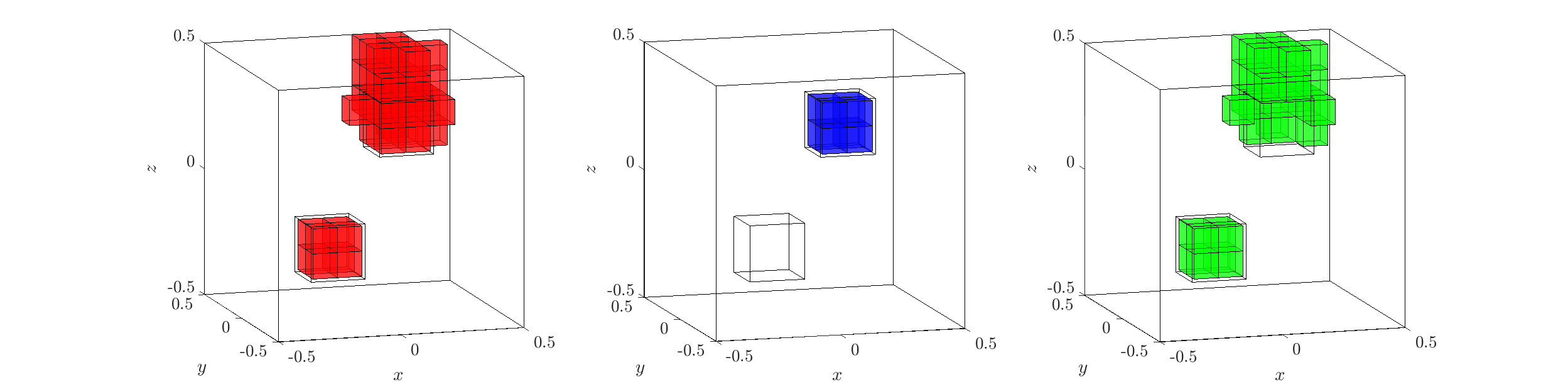}
\caption{Reconstruction for $\omega = 50 \frac{rad}{s}$ with $1000$ test blocks
and $600$ normal boundary loads. $D=D_1\cup D_2$ and $\tilde{M}_1 = 28$ (red),
$D_2 $ and $\tilde{M}_2 = 28$ (blue) and $D_1\setminus D_2$
(green).}\label{overview_normal}
\end{figure}
\noindent
As depicted in Figure \ref{overview_normal}, normal loads are enough to
reconstruct $D_2$ (blue), but the reconstruction of $D=D_1\cup D_2$ (red) is
significantly worse than in Figure \ref{cubes_50_Hz_1000_blocks} for $600$
normal loads and $1200$ tangential loads used together resulting in a bad
reconstruction of $D_1$ via the set difference (green). 
\\
\\
On the contrary, using only tangential loads on the same surface patches, we obtain
the same reconstruction as in Figures \ref{cubes_50_Hz_1000_blocks}. This may
have two reasons. First, using more boundary loads in general leads to a better
reconstruction and second, using tangential loads introduce a higher rotational
part in $u$ which seems to favor the reconstruction.
\\
\\
These initial tests of the monotonicity method result, all in all,
% All in all, the monotonicity methods result 
in a good reconstruction of
inclusions and are to a degree able to separate between inclusions in
$\lambda$ and $\mu$, when $\rho=\rho_0$. % up to intersections, which will be included in $D_2$ only.
Further it is possible to reduce computation time by only taking normal
or tangential boundary loads into account but it should be noted that using tangential parts
is preferred to normal loads for a better reconstruction. If computation time
is not an issue, the use of tangential and normal loads is preferred for the
best reconstruction ability. 
\\
\\

\section{Appendix}

\subsection{Estimates for a source problem.} \label{sec_ellipticEst}
Here we investigate the well-posedness  and apriori estimates, for the weak solutions 
to the source problem
$$
u \in \mathcal{V} :=  \{ u \in H^1(\Omega)^3 \,:\, u|_{\Gamma_D} = 0 \}
$$
to the formal boundary value problem
\begin{align}  \label{eq_bvp2}
\begin{cases}
\nabla \cdot (\C\,  \hat \nabla u )  + \omega^2\rho u + \tau u &=  F + \nabla \cdot A \\
% \sum_{k=1}^3 \p_k F^k \\ %\phi_1F^1 + \nabla \cdot (\phi_2 (\nabla \cdot F^2)I)+\nabla\cdot (\phi_3 \hat \nabla F^3), \\
\;\quad\quad\quad\quad(\gamma_\C u ) |_{\Gamma_N} &=  A \nu |_{\Gamma_N} , \\	
 \quad\quad\quad\quad\quad \quad u |_{\Gamma_D} &= 0.	
\end{cases}
\end{align}
where $F \in L^2(\Omega)^3$ and $A \in L^2( \Omega)^{3 \times 3}$. %, with $\supp(A) \Subset \Omega$. 
In general  $ \nabla \cdot A$ will lie in the dual space  $\mathcal{V}^*$, and $\tau \in \R$.
The norm in the dual space is given by
\begin{equation} \label{eq_Vdual_norm}
\begin{aligned}
\| \ell \|_{\mathcal{V}^*} := \sup \big\{ |l(v)| \,:\, v \in \mathcal{V}, \,\| v \|_{H^1(\Omega)^3} =1 \big\}.
\end{aligned}
\end{equation}
A weak solution to \eqref{eq_bvp2} is an element $u \in \mathcal V$ such that
\begin{align} \label{eq_weak3}
B_\tau(u,v) := B(u,v) + \tau (u,v)_{L^2(\Omega)^3}  = \int_\Omega F \cdot v \,dx - \int_{\Omega}  A : \hat \nabla v \,dx,
% + \int_{\Gamma_N} A \nu \cdot v\, dS,
\qquad \forall v \in \mathcal V,
\end{align}
where $B$ is as in \eqref{eq_weak2}.
To show that \eqref{eq_bvp2} is well-posed, outside a discrete set of $\tau \in \R$, 
we will need the following Lemma.

\begin{prop} \label{prop_elipEst_1}
% There exists a numerable set $\{\tau_k \in \R \;:\; k \in \N\}$, s.t.
There exists a $\tau_0 \leq 0$, for which
the boundary value problem in \eqref{eq_bvp2} admits a unique weak solution
$u \in \mathcal{V}$, which satisfies
\begin{align}  \label{eq_aprioriEst}
\| u \|_{ H^1(\Omega)^3} 
\leq C 
(\| F \|_{L^2( \Omega)^{3}}+\| A \|_{L^2( \Omega)^{3 \times 3}}).
\end{align}
\end{prop}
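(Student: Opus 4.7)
\emph{Proof proposal.} The plan is to apply the Lax--Milgram theorem to the bilinear form $-B_\tau$ on the Hilbert space $\mathcal{V}$, for $\tau_0$ chosen sufficiently negative so that $-B_{\tau_0}$ is coercive. First, I would observe that the right-hand side of \eqref{eq_weak3} defines an element $\ell_{F,A} \in \mathcal{V}^*$. Indeed, by Cauchy--Schwarz,
\begin{align*}
|\ell_{F,A}(v)|
&\leq \|F\|_{L^2(\Omega)^3}\|v\|_{L^2(\Omega)^3} + \|A\|_{L^2(\Omega)^{3\times 3}}\|\hat\nabla v\|_{L^2(\Omega)^{3\times 3}} \\
&\leq C\bigl(\|F\|_{L^2(\Omega)^3} + \|A\|_{L^2(\Omega)^{3\times 3}}\bigr)\|v\|_{H^1(\Omega)^3},
\end{align*}
so $\|\ell_{F,A}\|_{\mathcal{V}^*} \leq C(\|F\|_{L^2} + \|A\|_{L^2})$. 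Boundedness of $-B_\tau$ on $\mathcal{V}\times\mathcal{V}$ is immediate from $\lambda,\mu,\rho \in L^\infty(\Omega)$ together with Cauchy--Schwarz.

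The central step is coercivity. Using the definition of $B$ in \eqref{eq_weak} one has
\begin{align*}
-B_\tau(u,u) = \int_\Omega \bigl(2\mu|\hat\nabla u|^2 + \lambda|\nabla\cdot u|^2 - \omega^2 \rho |u|^2\bigr)\,dx - \tau\|u\|_{L^2(\Omega)^3}^2.
\end{align*}
Dropping the nonnegative $\lambda$-term and using $\mu \geq \mu_{\min} > 0$ gives
\begin{align*}
-B_\tau(u,u) \geq 2\mu_{\min}\|\hat\nabla u\|_{L^2}^2 - \bigl(\omega^2\|\rho\|_\infty + \tau\bigr)\|u\|_{L^2}^2.
\end{align*}
I would then invoke Korn's second inequality, valid on all of $H^1(\Omega)^3$ (and hence on the closed subspace $\mathcal{V}$), which yields a constant $C_K > 0$ with $\|\nabla u\|_{L^2}^2 \leq C_K(\|\hat\nabla u\|_{L^2}^2 + \|u\|_{L^2}^2)$. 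Substituting this in produces
\begin{align*}
-B_\tau(u,u) \geq \frac{2\mu_{\min}}{C_K}\|\nabla u\|_{L^2}^2 - \bigl(2\mu_{\min} + \omega^2\|\rho\|_\infty + \tau\bigr)\|u\|_{L^2}^2.
\end{align*}
Choosing $\tau_0 := -\bigl(2\mu_{\min} + \omega^2\|\rho\|_\infty + 1\bigr) \leq 0$ yields $-B_{\tau_0}(u,u) \geq c\|u\|_{H^1(\Omega)^3}^2$ for some $c > 0$.

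With coercivity and boundedness established, Lax--Milgram provides a unique $u \in \mathcal{V}$ satisfying $-B_{\tau_0}(u,v) = -\ell_{F,A}(v)$ for every $v \in \mathcal{V}$, i.e.\ the weak formulation \eqref{eq_weak3}, together with the bound $\|u\|_{H^1} \leq c^{-1}\|\ell_{F,A}\|_{\mathcal{V}^*} \leq C(\|F\|_{L^2} + \|A\|_{L^2})$, which is exactly \eqref{eq_aprioriEst}. The main obstacle is the coercivity step: since $\Gamma_D$ may be empty, I cannot invoke the first Korn inequality (which would have given direct control of $\|\nabla u\|_{L^2}$ by $\|\hat\nabla u\|_{L^2}$) and must instead use the second Korn inequality at the cost of an additional $\|u\|_{L^2}^2$ term, which is then absorbed precisely by the shift $\tau_0$ that the statement allows us to make.
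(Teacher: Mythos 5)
Your proposal is correct and follows essentially the same route as the paper: Lax--Milgram applied to the shifted form $B_{\tau_0}$, with coercivity obtained from Korn's second inequality (the paper cites Theorem 2.4 in \cite{OSY92}) and the $\|u\|_{L^2}^2$ defect absorbed by taking $\tau_0$ sufficiently negative, followed by the standard a priori bound from coercivity. Your explicit choice of $\tau_0$ and the remark on why the first Korn inequality is unavailable when $\Gamma_D=\emptyset$ are fine refinements of the same argument.
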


\begin{proof} 
We use the Lax-Milgram Lemma to prove the uniqueness and existence of a weak solution
(see e.g. Theorem 1.3 in \cite{OSY92}).
We need to show that the bilinear form $B_\tau: \mathcal{V}\times \mathcal{V}\to \R$ 
is coercive and continuous for some $\tau \leq 0$. For coercivity we need 
to show that 
\begin{align} \label{eq_Btau_coer}
|B_{\tau_0}(u,u)| \geq c \| u \|^2_{ H^1(\Omega)^3}, \quad c>0,
\end{align}
for some $\tau_0 \leq 0$. Let $\tau \leq 0$.
The Korn inequality (see Theorem 2.4 p 17 in \cite{OSY92}) gives that
\begin{align*} 
|B_\tau(u,u)| &\geq 
\int_\Omega 2 \mu \hat \nabla u :\hat \nabla u + \lambda \nabla \cdot u \nabla \cdot u 
-\tau \int_\Omega u^2\,dx
- \omega^2 \| \rho \|_{L^\infty}\int_\Omega u^2\,dx \\ %\| u \|^2_{L^2(\Omega)^3} \\
&\geq
C \big(\| \hat \nabla u \|^2_{L^2(\Omega)^{3\times 3}} 
+\| \nabla \cdot u \|^2_{L^2(\Omega)} \big)
-(\tau + \omega^2\| \rho \|_{L^\infty})\| u \|^2_{L^2(\Omega)^3} \\
&\geq
C \| u \|^2_{H^1(\Omega)^3} 
-(C+\tau + \omega^2\| \rho \|_{L^\infty})\| u \|^2_{L^2(\Omega)^3}. 
\end{align*}
Thus if we choose a $|\tau|$ large enough with $\tau\leq 0$, the 2nd term is positive and can be dropped.
We thus see that we can choose a $\tau_0 \leq 0$  so that \eqref{eq_Btau_coer} holds.

The continuity of $B_\tau$ follows from the estimate
\begin{align*} 
|B_\tau(u,v)| &=  
\Bigg|\int_\Omega
2 \mu \hat \nabla u :\hat \nabla v + \lambda \nabla \cdot u \nabla \cdot v 
- (\tau + \omega^2\rho) u\cdot v\,dx
\Bigg| \\
&\leq
C (\| \hat \nabla u \|_{L^2(\Omega)^{3\times 3}} \| \hat \nabla v \|_{L^2(\Omega)^{3\times 3}}
+\| \nabla \cdot u \|_{L^2(\Omega)} \| \nabla \cdot v \|_{L^2(\Omega)}
+\| u \|_{L^2(\Omega)^3} \| v \|_{L^2(\Omega)^3}) \\
&\leq
C \| u \|_{H^1(\Omega)^3} \| v \|_{ H^1(\Omega)^3}.
\end{align*}
The Lax-Milgram Lemma gives us thus the existence of a unique $u \in \mathcal{V} \subset H^1(\Omega)^3$
for which \eqref{eq_weak2} holds for all $v \in \mathcal{V}$, and when $\tau \leq 0$, $|\tau|$ is large enough.
We have thus found a weak solution in accordance with \eqref{eq_weak2}.

The final step is to verify the estimate of the claim.
By the coercivity of the form $B$, we have that
\begin{small}
\begin{align*}
\| u \|^2_{H^1(\Omega)^3} \leq  C |B(u,u)| &= \Big | \int_\Omega F \cdot u \,dx - \int_{\Omega}  A : \hat \nabla u \,dx \Big|
&\leq C ( \| F \|_{L^2( \Omega)^{3}}+\| A \|_{L^2( \Omega)^{3 \times 3}}) \| u \|_{H^1(\Omega)^3}.
% \| u \|^2_{H^1(\Omega)^3} \leq  C |B(u,u)| = C |\ell(u)| \leq C \| \ell \|_{\mathcal{V}^*} \| u \|_{H^1(\Omega)^3}.
\end{align*}
\end{small}
\end{proof}

\begin{prop} \label{prop_elipEst_1}
Assume that zero is not an eigenvalue for the mixed eigenvalue problem related to \eqref{eq_bvp2}.
Then \eqref{eq_bvp2} admits a unique weak solution $u \in \mathcal{V}$, which satisfies
\begin{align}  \label{eq_aprioriEst2}
\| u \|_{ H^1(\Omega)^3} 
\leq C 
% \| \ell \|_{\mathcal{V}^*}.
(\| F \|_{L^2( \Omega)^{3}}+\| A \|_{L^2( \Omega)^{3 \times 3}}).
\end{align}
\end{prop}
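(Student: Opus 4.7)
The plan is to deduce this from the previous proposition via a standard Fredholm alternative argument, upgrading the solvability at some sufficiently negative $\tau_0 \leq 0$ to solvability at $\tau = 0$ under the non-resonance hypothesis.

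First, by the preceding proposition there exists $\tau_0 \leq 0$ such that the bilinear form $B_{\tau_0}$ is coercive on $\mathcal V \times \mathcal V$, and therefore defines a continuous solution operator $T_{\tau_0} : \mathcal V^* \to \mathcal V$ with $\|T_{\tau_0} \ell\|_{H^1(\Omega)^3} \leq C\|\ell\|_{\mathcal V^*}$. For the data at hand I set
\begin{equation*}
\ell(v) := \int_\Omega F\cdot v\,dx - \int_\Omega A:\hat\nabla v\,dx,
\end{equation*}
which by Cauchy--Schwarz satisfies $\|\ell\|_{\mathcal V^*} \leq C(\|F\|_{L^2} + \|A\|_{L^2})$. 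Writing $B(u,v) = B_{\tau_0}(u,v) - \tau_0 (u,v)_{L^2(\Omega)^3}$, the weak formulation \eqref{eq_weak3} with $\tau = 0$ is equivalent to
\begin{equation*}
u = T_{\tau_0}\ell + \tau_0\, T_{\tau_0}(\iota u),
\end{equation*}
where $\iota : \mathcal V \hookrightarrow L^2(\Omega)^3 \hookrightarrow \mathcal V^*$ is the canonical embedding through the $L^2$ pairing.

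Next I would verify that the operator $K : \mathcal V \to \mathcal V$, defined by $Ku := \tau_0 T_{\tau_0}(\iota u)$, is compact. The embedding $\mathcal V \hookrightarrow L^2(\Omega)^3$ is compact by Rellich--Kondrachov, the inclusion $L^2(\Omega)^3 \hookrightarrow \mathcal V^*$ is continuous, and $T_{\tau_0}$ is bounded; composition of a compact map with bounded maps is compact. The equation then reads
\begin{equation*}
(I - K) u = T_{\tau_0} \ell,
\end{equation*}
so the Fredholm alternative on $\mathcal V$ applies.

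The main point is identifying $\Ker(I-K)$ with the eigenspace in the hypothesis. If $u \in \Ker(I-K)$, then $u = \tau_0 T_{\tau_0}(\iota u)$ means $B_{\tau_0}(u,v) = \tau_0(u,v)_{L^2(\Omega)^3}$ for every $v \in \mathcal V$, i.e.\ $B(u,v) = 0$, so $u$ is a weak solution of the homogeneous mixed problem with $F=0$ and $A=0$. The assumption that zero is not an eigenvalue of the mixed eigenvalue problem for \eqref{eq_bvp2} forces $u = 0$. By the Fredholm alternative $(I-K)$ is therefore a bijection on $\mathcal V$ with bounded inverse.

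Finally, existence and uniqueness follow immediately: $u = (I-K)^{-1} T_{\tau_0} \ell \in \mathcal V$ is the unique weak solution, and
\begin{equation*}
\|u\|_{H^1(\Omega)^3} \leq \|(I-K)^{-1}\|\, \|T_{\tau_0}\|\, \|\ell\|_{\mathcal V^*} \leq C\big(\|F\|_{L^2(\Omega)^3} + \|A\|_{L^2(\Omega)^{3\times 3}}\big),
\end{equation*}
which is \eqref{eq_aprioriEst2}. The only non-routine step is the Fredholm reduction itself, and specifically ensuring that the compact operator $K$ is set up on the right space so that its kernel corresponds exactly to the homogeneous weak solutions appearing in the eigenvalue hypothesis; the rest is composition of continuous and compact maps together with the previous proposition.
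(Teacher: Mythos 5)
Your proposal is correct and follows essentially the same route as the paper: both reduce the $\tau=0$ problem to a Fredholm alternative for $I$ minus a compact operator built from the coercive solution operator at $\tau_0$, and both identify the kernel of that operator with the weak solutions of the homogeneous problem, which vanish by the non-eigenvalue hypothesis. The only differences are cosmetic — you set up the compact operator on $\mathcal V$ rather than on $L^2(\Omega)^3$ as the paper does, and you make explicit the derivation of the a priori estimate \eqref{eq_aprioriEst2} from the boundedness of $(I-K)^{-1}$, which the paper leaves implicit.
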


\begin{proof}
According to Proposition 3.3 in \cite{EP24} the spectrum of the mixed eigenvalue problem related to \eqref{eq_bvp2}
consists of eigenvalues
$$
\sigma_1 \geq \sigma_2 \geq \sigma_3 \geq \dots \to -\infty.
$$
Assume that  $\sigma_k \neq 0$ for all $k$, so that zero is not an eigenvalue and we can 
choose $\tau =0$.  And in particular there is no $\tilde u \not \equiv 0$, such that
\begin{equation} \label{eq_no_ev}
\begin{aligned}
B(\tilde u, v) = 0, \qquad \forall v \in \mathcal{V}.
\end{aligned}
\end{equation}
Let $\tau_0$ be as in Proposition \ref{prop_elipEst_1}, and define the solution map
$$
K_{\tau_0}: (F,A) \mapsto u,\qquad K_{\tau_0}: L^2(\Omega)^3 \times L^2(\Omega)^{3 \times 3} \to H^1(\Omega)^3,
$$
where $u,F$ and $A$ are as in Proposition \ref{prop_elipEst_1}.
A vector field $u$ is a weak solution to problem \eqref{eq_bvp2}, if 
\begin{align*}
& B(u,v) = (F,v)_{L^2(\Omega)^3} - (A,\hat \nabla v)_{L^2(\Omega)^{3 \times 3}}, \quad \forall v \in \mathcal{V} \\ 
&\;\Leftrightarrow\;
B_{\tau_0}(u,v) = (F,v)_{L^2(\Omega)^3} - (A,\hat \nabla v)_{L^2(\Omega)^{3 \times 3}}+ \tau_0(u,v)_{L^2}, \quad \forall v \in \mathcal{V} \\
&\;\Leftrightarrow\; 
u = K_{\tau_0}(F + \tau_0 u, A) \\
&\;\Leftrightarrow\;
u - \tau_0 K_{\tau_0}(u, 0) = K_{\tau_0}(F,A) \\
&\;\Leftrightarrow\;
(I - \tau_0 \tilde K_{\tau_0})u = \tilde F,
\end{align*}
where $\tilde F :=  K_{\tau_0}(F,A)$ and $\tilde K_{\tau_0} := K_\tau( \cdot , 0)$.
The mapping $\tilde K_{\tau_0}:  L^2(\Omega)^3 \times L^2(\Omega)^{3 \times 3}  \to L^2(\Omega)^3$ is compact, 
which can be seen using the Sobolev embedding. 
We can thus interpret the last line as  an integral equation over $L^2$.
By the Fredholm alternative $(I - \tau_0 \tilde K_{\tau_0})u = F$ has a solution if and only if 
the corresponding homogeneous equation has no non  trivial solutions. 
There is no non trivial solutions to the homogeneous equation, because of \eqref{eq_no_ev}.
And hence we have a unique weak solution $u$ by the above equivalences.
% \medskip
% NEW END 
% \medskip

\end{proof}

\subsection{Existence and uniqueness for a second order Maxwell type equation.} \label{sec_maxwell}
In this subsection we will derive some results on the existence and uniqueness of solutions
to equation \eqref{eq_maxwell} using Fredholm theory. These are well known and appear in various forms elsewhere. 
We however present them in this part of the appendix in the specific forms needed for this paper. 

\begin{lem} \label{lem_maxwell_source1}
There exists a discrete set $\mathcal Z  \subset \R$, such that the source problem
\begin{equation} \label{eq_maxwell_source}
\begin{aligned}
-\nabla \times \nabla \times u+ \omega^2 \frac{\rho_0}{\mu_0} u &= F,  \quad \text{ in } \Omega, \\
(\nu \times  u)|_{\p\Omega} &= 0,  \quad \text{on } \p \Omega, 
\end{aligned}
\end{equation}
has a unique solution $u \in H^1(\Omega)^3$, when $\omega \notin \mathcal Z$, and
where $F \in L^2(\Omega)^3$, with $\nabla \cdot F = 0$. Moreover $ \nabla \cdot u= 0$.
\end{lem}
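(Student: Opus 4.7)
The plan is to apply Fredholm theory on a divergence-free subspace. First I would set
$$
X_N := \{ u \in \Hocurl{\Omega} \,:\, \nabla \cdot u = 0 \text{ weakly} \},
$$
a closed subspace of $\Hocurl{\Omega}$, and pose the weak formulation of \eqref{eq_maxwell_source}: find $u \in X_N$ with
$$
\int_\Omega \nabla \times u \cdot \nabla \times v - \omega^2 \tfrac{\rho_0}{\mu_0} u \cdot v \, dx = -\int_\Omega F \cdot v \, dx, \qquad \forall v \in X_N.
$$
That this is equivalent to \eqref{eq_maxwell_source} (with $\nabla \cdot u = 0$ built in) uses the hypothesis $\nabla \cdot F = 0$: a general test field in $\Hocurl{\Omega}$ Helmholtz-decomposes as $v = v_0 + \nabla \phi$ with $v_0 \in X_N$ and $\phi \in H^1_0(\Omega)$, and the $\nabla \phi$ piece contributes zero on both sides (the curl term vanishes on a gradient, and integrating by parts against $F$ exploits $\nabla \cdot F = 0$ together with $F \cdot \nu = 0$ taken weakly).

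Next I would introduce the shifted form $a_\tau(u,v) := \int_\Omega \nabla \times u \cdot \nabla \times v + \tau u \cdot v \, dx$ for a fixed $\tau > 0$, which is trivially coercive on $X_N$ with the graph norm of $\Hocurl{\Omega}$. Lax--Milgram then produces a bounded solution operator $K_\tau$ mapping the divergence-free subspace of $L^2(\Omega)^3$ into $X_N$. The critical ingredient is Weber's compactness theorem: since $\p \Omega$ is $C^{1,1}$, the embedding $X_N \hookrightarrow L^2(\Omega)^3$ is compact, and in fact $X_N \subset H^1(\Omega)^3$ with the elliptic estimate $\|u\|_{H^1} \leq C(\|\nabla \times u\|_{L^2} + \|u\|_{L^2})$, which upgrades our solutions from $\Hocurl{\Omega}$ to $H^1(\Omega)^3$ as required.

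Rewriting the original weak equation as $\bigl(I - (\tau + \omega^2 \tfrac{\rho_0}{\mu_0}) K_\tau \bigr) u = -K_\tau F$ on the divergence-free subspace of $L^2(\Omega)^3$, the compactness of $K_\tau$ places us in the Fredholm alternative regime. Existence and uniqueness hold whenever the homogeneous problem admits only the trivial solution. To pin down that the exceptional set $\mathcal Z$ is discrete in $\R$, I would invoke the analytic Fredholm theorem: the operator depends holomorphically on $\omega^2 \in \C$, and for $\omega^2$ sufficiently negative (e.g. purely imaginary $\omega$) the full form $\int \nabla \times u \cdot \nabla \times v - \omega^2 \tfrac{\rho_0}{\mu_0} u \cdot v \, dx$ is itself coercive, giving invertibility at one point and hence discreteness of the exceptional set everywhere.

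The main obstacle is the notorious non-compactness of $\Hocurl{\Omega} \hookrightarrow L^2(\Omega)^3$: the whole argument depends on restricting to the divergence-free subspace $X_N$, where Weber's compact embedding and the associated $H^1$ regularity become available. The compatibility assumption $\nabla \cdot F = 0$ is exactly what makes this reduction legitimate and simultaneously forces the solution to be divergence free, which is the final assertion of the lemma.
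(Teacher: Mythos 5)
Your argument is correct, but it takes a genuinely different route from the paper: the paper disposes of this lemma in one line by citing Theorem A.1 of \cite{Zh10}, whereas you give a self-contained Fredholm-theoretic proof. The ingredients you use --- restriction to the divergence-free subspace $X_N$ of $\Hocurl{\Omega}$, the Helmholtz decomposition $v = v_0 + \nabla\phi$ of test fields combined with $\nabla\cdot F=0$ to pass back to the full test space, the compact embedding $X_N \hookrightarrow L^2(\Omega)^3$ together with the $H^1$ regularity estimate on a $C^{1,1}$ domain (the same norm equivalence from Corollary 5, p.~51 of \cite{Ce96} that the paper invokes in the proof of Lemma \ref{lem_maxwell_source3}), and the analytic Fredholm theorem with coercivity at $\omega^2<0$ as the anchor point --- are all sound, and in fact this is essentially the machinery the paper itself deploys one lemma later when it bootstraps from the present statement to Lemma \ref{lem_maxwell_source3}. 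What your approach buys is independence from the external reference and an argument uniform in style with the rest of the appendix; what the citation buys is brevity. Two small points worth tightening: (i) the lemma asserts uniqueness among all $u\in H^1(\Omega)^3$, not only among divergence-free fields, so add the one-line observation that taking the divergence of the equation forces $\omega^2\tfrac{\rho_0}{\mu_0}\,\nabla\cdot u = \nabla\cdot F = 0$, hence every solution lies in $X_N$ once $\omega\neq 0$ (and $\omega=0$ must in any case be placed in $\mathcal Z$, since gradients of $H^2\cap H^1_0$ functions lie in the kernel there); (ii) in the equivalence step the contribution of the test field $\nabla\phi$ to the right-hand side vanishes simply because $\phi\in H^1_0(\Omega)$, so no weak normal trace of $F$ needs to be invoked.
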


\begin{proof}
The claim follows from  Theorem A.1 \cite{Zh10}.
\end{proof}

\begin{lem} \label{lem_maxwell_bc1}
There exists a discrete set $\mathcal Z  \subset \R$, such that the boundary value problem
\begin{equation} \label{eq_maxwell_bc1}
\begin{aligned}
-\nabla \times \nabla \times u+ \omega^2 \frac{\rho_0}{\mu_0} u &= 0,  \quad \text{ in } \Omega, \\
(\nu \times  u)|_{\p\Omega} &= f,  \quad \text{on } \p \Omega, 
\end{aligned}
\end{equation}
has a unique solution $u \in H^1(\Omega)^3$, when $\omega \notin \mathcal Z$, and
where $f \in H^{1/2}_t(\p \Omega)$. Furthermore $\nabla \cdot u= 0$.
\end{lem}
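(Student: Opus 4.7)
The plan is to reduce the boundary value problem \eqref{eq_maxwell_bc1} to the source problem of Lemma \ref{lem_maxwell_source1} by constructing a divergence-free lift of the tangential boundary data. The discrete set $\mathcal Z$ can be taken as the exceptional set from Lemma \ref{lem_maxwell_source1}, augmented by $\{0\}$ if necessary for the uniqueness step below.

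The key intermediate step is producing a divergence-free extension $u_0 \in H^1(\Omega)^3$ with $\nu\times u_0|_{\p\Omega}=f$ and $\nabla\cdot u_0=0$. I would first choose any $u_1\in H^1(\Omega)^3$ whose tangential trace equals $f$; this is possible because $f\in H^{1/2}_t(\p\Omega)\subset H^{1/2}(\p\Omega)^3$ lies in the image of the surjective Dirichlet trace (take for instance a lift of $-\nu\times f$ and use $\nu\times(-\nu\times f)=f$ for tangential $f$). I would then kill the divergence by subtracting $\nabla\phi$, where $\phi\in H^1_0(\Omega)$ solves $\Delta\phi=\nabla\cdot u_1$; since $\phi$ vanishes on $\p\Omega$, its gradient is purely normal there and $\nu\times\nabla\phi|_{\p\Omega}=0$, so $u_0:=u_1-\nabla\phi$ still has tangential trace $f$ and now satisfies $\nabla\cdot u_0=0$.

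Setting $u=u_0+v$ reduces the problem for $v$ to a source problem with zero tangential boundary trace and source $F:=\nabla\times\nabla\times u_0-\omega^2\tfrac{\rho_0}{\mu_0}u_0$, which is divergence-free since $\nabla\cdot(\nabla\times\nabla\times u_0)=0$ identically and $\nabla\cdot u_0=0$. Lemma \ref{lem_maxwell_source1} then produces, for $\omega\notin\mathcal Z$, a unique $v\in H^1(\Omega)^3$ with $\nabla\cdot v=0$, so $u:=u_0+v$ solves \eqref{eq_maxwell_bc1} and $\nabla\cdot u=0$. For uniqueness, the difference $w$ of two solutions satisfies the homogeneous equation with $\nu\times w|_{\p\Omega}=0$; taking the divergence gives $\omega^2\tfrac{\rho_0}{\mu_0}\nabla\cdot w=0$, hence $\nabla\cdot w=0$ for $\omega\neq 0$, and the uniqueness part of Lemma \ref{lem_maxwell_source1} with zero source yields $w\equiv 0$. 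The main technical subtlety is that $\nabla\times\nabla\times u_0$ is only distributional when $u_0$ is merely $H^1$, so the reduction should be interpreted weakly; the Fredholm framework underlying Lemma \ref{lem_maxwell_source1} extends to such right-hand sides living in the dual of $H(\mathrm{curl};\Omega)$, and the resulting $v$ retains the claimed $H^1$ regularity by standard Maxwell regularity theory on $C^{1,1}$ domains.
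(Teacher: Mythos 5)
Your argument is correct in outline but takes a genuinely different route from the paper: the paper's entire proof of Lemma \ref{lem_maxwell_bc1} is a citation of Theorem A.1 in [Zh10], whereas you give a self-contained reduction of the boundary value problem to the homogeneous-trace source problem of Lemma \ref{lem_maxwell_source1} via a divergence-free lift. The lift itself is clean: the tangential trace is realized by extending $-\nu\times f$ and using $\nu\times(-\nu\times f)=f$ for tangential $f$, and the correction $u_0=u_1-\nabla\phi$ with $\phi\in H^1_0(\Omega)$, $\Delta\phi=\nabla\cdot u_1$, preserves the tangential trace and kills the divergence (note you implicitly need $\phi\in H^2(\Omega)$ so that $\nabla\phi\in H^1(\Omega)^3$, which holds on the $C^{1,1}$ domain). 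Your uniqueness step and your observation that $0$ must be adjoined to $\mathcal Z$ (the divergence argument, and indeed the whole decoupling from gradient fields, fails at $\omega=0$) are both sound. What your approach buys is transparency and independence from the external reference; what it costs is the step you yourself flag: the residual source $F=\nabla\times\nabla\times u_0-\omega^2\tfrac{\rho_0}{\mu_0}u_0$ is only an element of the dual of $\Hocurl{\Omega}$, so Lemma \ref{lem_maxwell_source1} does not apply as stated, and you assert rather than prove that "the Fredholm framework extends". This is the one real soft spot. It is repairable by a standard argument: work directly with the weak formulation
\begin{equation*}
-(\nabla\times v,\nabla\times w)_{L^2(\Omega)^3}+\omega^2\tfrac{\rho_0}{\mu_0}(v,w)_{L^2(\Omega)^3}
=(\nabla\times u_0,\nabla\times w)_{L^2(\Omega)^3}-\omega^2\tfrac{\rho_0}{\mu_0}(u_0,w)_{L^2(\Omega)^3},
\end{equation*}
observe that the right-hand side annihilates gradients $w=\nabla\psi$, $\psi\in H^1_0(\Omega)$, precisely because $\nabla\cdot u_0=0$, restrict to the divergence-free subspace where the embedding of $\Hocurl{\Omega}\cap L^2(\Omega;\operatorname{div}0)$ into $L^2$ is compact, and recover $v\in H^1(\Omega)^3$ from the norm equivalence the paper itself quotes (Corollary 5, p.~51 of [Ce96]). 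You should either carry out this variational version explicitly or cite a reference that states the cavity problem with a functional right-hand side; as written, the reduction to Lemma \ref{lem_maxwell_source1} is not literally licensed by its hypotheses.
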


\begin{proof}
The claim follows from  Theorem A.1 \cite{Zh10}.
\end{proof}

\begin{lem}  \label{lem_maxwell_source3}
Assume that $ \omega \neq 0$.
There exists a subspace $\mathcal E \subset H^1(\Omega)^3 $  of finite  dimension, such that
the source problem
\begin{equation} \label{eq_maxwell_source_3}
\begin{aligned}
-\nabla \times \nabla \times u+ \omega^2 \frac{\rho_0}{\mu_0} u &= F,  \quad \text{ in } \Omega, \\
(\nu \times  u)|_{\p\Omega} &= 0,  \quad \text{on } \p \Omega, 
\end{aligned}
\end{equation}
has a solution $u \in H^1(\Omega)^3$, when $F \in L^2(\Omega)^3$, with $ \nabla \cdot F = 0$, provided %and only if 
that
$$
(F,\phi)_{L^2(\Omega)^3} = 0, \qquad \forall \phi \in \mathcal E.
$$
The solution $u$ is unique as an element of $H^{1}(\Omega)^3 / \mathcal{E}$.
Moreover $ \nabla \cdot u= 0$.  
\end{lem}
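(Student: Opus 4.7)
The plan is to derive this from Lemma \ref{lem_maxwell_source1} by means of a standard Fredholm alternative argument, shifting the equation to a non-resonance frequency. Fix $\omega_0 \in \R \setminus \mathcal{Z}$, where $\mathcal{Z}$ is the discrete set in Lemma \ref{lem_maxwell_source1}. That lemma then provides a well-defined bounded solution operator
$$
S_{\omega_0} : L^2(\Omega;\operatorname{div}0) \to H^1(\Omega)^3 \cap L^2(\Omega;\operatorname{div}0), \qquad S_{\omega_0}(G) := u_G,
$$
where $u_G$ solves \eqref{eq_maxwell_source} with $\omega$ replaced by $\omega_0$. Any $H^1$ solution $u$ of \eqref{eq_maxwell_source_3} automatically satisfies $\nabla \cdot u = 0$, since taking the divergence of the equation gives $\omega^2(\rho_0/\mu_0)\nabla\cdot u = \nabla \cdot F = 0$ and $\omega \neq 0$.

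I would then rewrite \eqref{eq_maxwell_source_3} as
$$
-\nabla\times\nabla\times u + \omega_0^2 \tfrac{\rho_0}{\mu_0} u = F + (\omega_0^2 - \omega^2)\tfrac{\rho_0}{\mu_0} u,
$$
and, since the right-hand side is divergence-free, apply $S_{\omega_0}$ to recast the problem as the operator equation
$$
(I - T)u = S_{\omega_0}(F), \qquad Tu := (\omega_0^2 - \omega^2)\tfrac{\rho_0}{\mu_0}\, S_{\omega_0}(u),
$$
in $L^2(\Omega;\operatorname{div}0)$. The operator $T$ is compact on this space because $S_{\omega_0}$ maps into $H^1(\Omega)^3$ and the Rellich embedding $H^1 \hookrightarrow L^2$ is compact. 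The Fredholm alternative then yields that $\mathcal{E} := \ker(I - T)$ is finite-dimensional and the equation admits a solution iff $S_{\omega_0}(F) \perp \ker(I - T^*)$. Unwinding the definition, every $\phi \in \mathcal{E}$ satisfies $\phi = (\omega_0^2 - \omega^2)(\rho_0/\mu_0) S_{\omega_0}(\phi)$, so $\phi \in H^1(\Omega)^3$ is a solution of the homogeneous version of \eqref{eq_maxwell_source_3}; in particular $\mathcal{E}$ is nothing but the zero-eigenspace of the underlying Maxwell operator.

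To identify $\ker(I - T^*)$ with $\mathcal{E}$ and translate the orthogonality into a condition on $F$, I would invoke the symmetry of $S_{\omega_0}$ on $L^2(\Omega;\operatorname{div}0)$, which follows from two integrations by parts using $\nu \times v|_{\p\Omega}=0$ applied to both arguments. Then
$$
(S_{\omega_0}(F),\phi)_{L^2} = (F, S_{\omega_0}(\phi))_{L^2} = \tfrac{\mu_0}{(\omega_0^2-\omega^2)\rho_0}(F,\phi)_{L^2},
$$
so the solvability condition reduces to $(F,\phi)_{L^2}=0$ for all $\phi \in \mathcal{E}$. The resulting $u$ lies in $H^1(\Omega)^3$ since $u = S_{\omega_0}(F) + Tu$ and both terms are in $H^1$, and $\nabla\cdot u = 0$ as already observed; uniqueness modulo $\mathcal{E}$ is part of the Fredholm conclusion. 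I expect the main obstacle to lie in the symmetry of $S_{\omega_0}$ and in verifying that its range is genuinely contained in $L^2(\Omega;\operatorname{div}0)$, which amounts to checking that solutions of \eqref{eq_maxwell_source} are divergence-free when the source is; both reduce to taking the divergence of the equation, using $\omega_0\neq 0$, and a careful use of the weak formulation guaranteed by Lemma \ref{lem_maxwell_source1}.
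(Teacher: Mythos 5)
Your proposal is correct and follows essentially the same route as the paper: shift to a non-resonance frequency $\omega_0$, use the solution operator from Lemma \ref{lem_maxwell_source1} to recast the problem as $(I-cK_0)u = K_0F$ on $L^2(\Omega;\operatorname{div}0)$, establish compactness via Rellich and symmetry via integration by parts, and apply the Fredholm alternative, with the self-adjointness converting the solvability condition $K_0F \perp \mathcal{E}$ into $F \perp \mathcal{E}$. The only cosmetic difference is that you carry the constant $(\omega_0^2-\omega^2)\rho_0/\mu_0$ explicitly through the eigenvector relation rather than stating the equivalence abstractly, which changes nothing of substance.
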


\begin{proof}
We first note that the assumption $\omega \neq 0$, implies that a solution $u$ to \eqref{eq_maxwell_source_3}
is such that $ \nabla \cdot u = 0$, and hence in $u \in L^2(\operatorname{div}\,0; \Omega)$, when $\omega \neq 0$.
We will use Fredholm theory. To this end define 
$$
K_0 : F \mapsto u, \qquad K_0 : L^2(\operatorname{div}\,0; \Omega) \to L^2(\operatorname{div}\,0; \Omega),
$$
where $u$ solves \eqref{eq_maxwell_source}, and where  in Lemma \ref{lem_maxwell_source1} we choose the $\omega$
as $\omega_0$ and where $\omega_0$ is such that a solution exists. 
The map $K_0$ is compact. To see this note first that $L^2(\operatorname{div}\,0; \Omega) \subset L^2(\Omega)^3$ is a closed
subspace. 
Note also that $u = K_0F \in H^1(\Omega)^3$, since  the equation \eqref{eq_maxwell_source} 
implies that $u \in \Hcurl{\Omega}$. And moreover since we have that
$$
\| u \|_{H^1(\Omega)^3} \sim
\| u \|_{L^2(\Omega)^3}
+\| \nabla \cdot	u \|_{L^2(\Omega)}
+\| \nabla \times u \|_{L^2(\Omega)^3}
+ \| \nu \times u \|_{H^{1/2}(\p \Omega)}	 
$$
according to corollary 5 on p. 51 in \cite{Ce96}.
Now $K_0 F \in H^1(\Omega)^3$, so that the compactness of the inclusion $H^1(\Omega)^3\subset L^2( \Omega)^3$
and continuity of the projection of  $L^2(\Omega)^3$ to $L^2(\operatorname{div}\,0; \Omega)$,
implies that $K_0$ is compact.
We claim that the operator $K_0$ is moreover self-adjoint.
Let us first check that $K_0$ is symmetric.
It is enough to check symmetry with respect to the $L^2$-inner product in a dense set.
Assume hence that $F,G \in \C^\infty_0(\Omega)^3 \cap L^2(\operatorname{div}\,0; \Omega)$.
By the weak formulation of \eqref{eq_maxwell_source_3}, we see that
$$
(K_0F, G)_{L^2(\Omega)^3} 
=-( \nabla \times K_0F, \nabla \times K_0G)_{L^2(\Omega)^3} + \omega^{2}_0 (K_0F,K_0G)_{L^2(\Omega)^3} 
= (F, K_0 G)_{L^2(\Omega)^3}, 
$$
$K_0$ is thus symmetric and bounded and thus self-adjoint.

Next we will apply the Fredholm alternative. We have that 
\begin{align*}
u \text{ solves equation \eqref{eq_maxwell_source_3}}
\quad 
&\Leftrightarrow \quad -\nabla \times \nabla \times u+ \omega_0^2
\tfrac{\rho_0}{\mu_0} u = F + (\omega^2_0 - \omega^2) \tfrac{\rho_0}{\mu_0} u
\\
&\Leftrightarrow \quad (I - (\omega^2_0 - \omega^2)\tfrac{\rho_0}{\mu_0} K_0) u = K_0 F, 
\end{align*}
where $u \in L^2(\operatorname{div}\,0; \Omega)$.
The operator $I - (\omega^2_0 - \omega^2)\tfrac{\rho_0}{\mu_0} K_0$ is a Fredholm operator of index
zero, according to Theorem 2.22 in \cite{Mc00}.
By the Fredholm alternative, see \cite{Mc00} Theorem 2.27, we now know that either  
$$
(I - (\omega^2_0 - \omega^2)\tfrac{\rho_0}{\mu_0} K_0) u = K_0 F, 
$$
has a unique solution $u \in L^2(\operatorname{div}\,0; \Omega)$, for every $K_0 F \in L^2(\operatorname{div}\,0; \Omega)$,
in which case we choose $\mathcal E = \{0\}$, and the claim holds, or the homogeneous equation
\begin{equation} \label{eq_homogeneous}
\begin{aligned}
(I - (\omega^2_0 - \omega^2)\tfrac{\rho_0}{\mu_0} K_0) \phi_k = 0, 
\end{aligned}
\end{equation}
has the non trivial solutions $\phi_1 ,..,\phi_p \in H^1(\Omega)^3$, $p \geq 1$.
In the latter case, we furthermore have that
there is a solution $u$ to 
$$
(I - (\omega^2_0 - \omega^2)\tfrac{\rho_0}{\mu_0} K_0) u = K_0 F, 
$$
provided that 
$$
K_0 F \perp \spn \{ \phi_1 ,..,\phi_p\}
\qquad \Leftrightarrow \qquad
F \perp \spn \{ \phi_1 ,..,\phi_p\},
$$
where we used the symmetry of $K_0$.
In this case we thus set $\mathcal E := \spn \{ \phi_1 ,..,\phi_p\}$, to see that the claim holds.

To prove uniqueness assume that $u_1$ and $u_2$ both solve \eqref{eq_maxwell_source_3} for a given $F$.
Then $u_1 - u_2$ also solves the homogeneous equation \eqref{eq_homogeneous}, so that
$u_1- u_2 = \phi \in \mathcal E$.
\end{proof}	

\noindent
Note that if $\mathcal E \neq \{0\}$ in Lemma \ref{lem_maxwell_source3}, then
$\mathcal{E}$ is the set of eigenfunctions of zero related to the problem 
\eqref{eq_maxwell_source_3}.

\begin{lem}  \label{lem_maxwell_bc3}
Assume that $\omega \neq 0$.
There exists a subspace $\mathcal E \subset H^1(\Omega)^3$  of finite  dimension, such that
the source problem
\begin{equation} \label{eq_maxwell_bc_3}
\begin{aligned}
-\nabla \times \nabla \times u+ \omega^2 \frac{\rho_0}{\mu_0} u &= 0,  \quad \text{ in } \Omega, \\
(\nu \times  u)|_{\p\Omega} &= f,  \quad \text{on } \p \Omega, 
\end{aligned}
\end{equation}
has a solution $u \in H_t^1(\Omega)^3$, when $f \in H_t^{1/2}(\p \Omega)$, provided  that
\begin{equation} \label{eq_f_cond}
\begin{aligned}
( f, \nabla \times \phi)_{L^2( \p \Omega)^3} = 0, \qquad \forall \phi \in \mathcal E.
\end{aligned}
\end{equation}
The solution $u$ is unique as an element of $H^{1}(\Omega)^3 / \mathcal{E}$.
Moreover $ \nabla \cdot u= 0$.  
\end{lem}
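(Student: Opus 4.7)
The plan is to reduce Lemma \ref{lem_maxwell_bc3} to the source problem of Lemma \ref{lem_maxwell_source3} by subtracting a reference solution at a non-resonance frequency, and then to show that the resulting compatibility condition on the source translates exactly into the hypothesis \eqref{eq_f_cond} on the boundary data.

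First I would select $\omega_0 \in \R \setminus \mathcal Z$, where $\mathcal Z$ is the discrete set from Lemma \ref{lem_maxwell_bc1}; since $\mathcal Z$ is discrete, such $\omega_0$ exists. Invoking Lemma \ref{lem_maxwell_bc1} at the frequency $\omega_0$ produces a reference solution $u_0 \in H^1(\Omega)^3$ with $\nu \times u_0|_{\p\Omega} = f$, $\nabla \cdot u_0 = 0$, and $-\nabla \times \nabla \times u_0 + \omega_0^2 (\rho_0/\mu_0) u_0 = 0$ in $\Omega$.

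Next I would search for $u$ in the form $u = u_0 + v$. A direct substitution shows that $v$ must solve the source problem
\begin{align*}
-\nabla \times \nabla \times v + \omega^2 \frac{\rho_0}{\mu_0} v &= F \quad \text{in } \Omega,\\
(\nu \times v)|_{\p\Omega} &= 0,
\end{align*}
with $F := -(\omega^2 - \omega_0^2)(\rho_0/\mu_0) u_0 \in L^2(\Omega)^3$. Since $\nabla \cdot u_0 = 0$, we automatically have $\nabla \cdot F = 0$, placing us in the hypotheses of Lemma \ref{lem_maxwell_source3}. The latter then yields $v \in H^1(\Omega)^3$ (unique modulo $\mathcal E$, with $\nabla \cdot v = 0$) as soon as the compatibility condition $(F,\phi)_{L^2(\Omega)^3} = 0$ holds for every $\phi \in \mathcal E$.

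The crux of the argument is to show that this compatibility is equivalent to \eqref{eq_f_cond}. Using that $\phi \in \mathcal E$ satisfies $\nabla \times \nabla \times \phi = \omega^2 (\rho_0/\mu_0) \phi$ and $\nu \times \phi|_{\p\Omega} = 0$, I would compute $(u_0, \nabla \times \nabla \times \phi)_{L^2(\Omega)^3}$ in two ways: first using the eigenequation for $\phi$, and second by applying the curl integration-by-parts formula twice, which drops all boundary terms containing $\nu \times \phi$ and leaves one boundary term pairing $\nu \times u_0 = f$ against $\nabla \times \phi$, via the identity $(\nu \times \nabla \times \phi) \cdot u_0 = -(\nu \times u_0) \cdot (\nabla \times \phi)$. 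Equating the two expressions and using the equation for $u_0$ yields
\begin{align*}
(\omega^2 - \omega_0^2) \frac{\rho_0}{\mu_0} (u_0,\phi)_{L^2(\Omega)^3} = -(f, \nabla \times \phi)_{L^2(\p\Omega)^3},
\end{align*}
so that $(F,\phi)_{L^2(\Omega)^3} = (f, \nabla \times \phi)_{L^2(\p\Omega)^3}$. Hence the solvability condition for the source problem is exactly \eqref{eq_f_cond}. The solution $u = u_0 + v \in H^1(\Omega)^3$ then satisfies \eqref{eq_maxwell_bc_3}, and $\nabla \cdot u = 0$. Uniqueness modulo $\mathcal E$ follows because any two solutions differ by a function solving the homogeneous source problem with zero tangential trace, hence by an element of $\mathcal E$.

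The main technical obstacle will be justifying the integration by parts rigorously: a priori, $\phi \in \mathcal E \subset H^1(\Omega)^3$ gives only $\nabla \times \phi \in L^2(\Omega)^3$, whose boundary trace in the pairing \eqref{eq_f_cond} is not automatic. However $\phi$ solves a second-order Maxwell-type elliptic system with $\nu \times \phi = 0$ on a $C^{1,1}$ boundary, and standard elliptic regularity lifts $\phi$ to $H^2(\Omega)^3$, giving $(\nabla \times \phi)|_{\p\Omega} \in H^{1/2}(\p\Omega)^3$ and making all boundary pairings above well defined. A secondary bookkeeping issue is to confirm that every boundary term containing $\nu \times \phi$ vanishes identically, which follows immediately from the defining boundary condition of $\mathcal E$.
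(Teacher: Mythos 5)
Your proposal is correct and follows essentially the same route as the paper: lift $f$ to a reference solution $u_0$ at a non-resonant frequency $\omega_0$ via Lemma \ref{lem_maxwell_bc1}, correct it by solving the source problem of Lemma \ref{lem_maxwell_source3} with $F=(\omega_0^2-\omega^2)(\rho_0/\mu_0)u_0$, and convert the compatibility condition $(F,\phi)_{L^2(\Omega)^3}=0$ into \eqref{eq_f_cond} by integration by parts, with uniqueness modulo $\mathcal E$ from the homogeneous problem. Your extra care about the trace of $\nabla\times\phi$ (via regularity of the eigenfunctions on the $C^{1,1}$ boundary) matches what the paper relegates to a footnote, so there is nothing to add.
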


\begin{proof}
We will use Lemma \ref{lem_maxwell_source3} to prove the claim.
First we extend $f$ as follows. By Lemma \ref{lem_maxwell_bc1}, there exists
an $\omega_0 \neq 0$ and an $\tilde u \in H^1(\Omega)^3$, that solves the problem
\begin{equation*}% \label{eq_maxwell_bc1}
\begin{aligned}
-\nabla \times \nabla \times \tilde u+ \omega_0^2 \frac{\rho_0}{\mu_0} \tilde u &= 0,  \quad \text{ in } \Omega, \\
(\nu \times  \tilde u)|_{\p\Omega} &= f,  \quad \text{on } \p \Omega. 
\end{aligned}
\end{equation*}
We can assume that $\omega \neq \omega_0$. Now according to Lemma \ref{lem_maxwell_source3}
there exists a subspace $\mathcal E \subset H^1(\Omega)^3$  of finite  dimension, 
and a solution $v \in H^1(\Omega)^3$
such that
\begin{equation*}% \label{eq_maxwell_bc1}
\begin{aligned}
-\nabla \times \nabla \times v + \omega^2 \frac{\rho_0}{\mu_0} v &= 
\nabla \times \nabla \times \tilde u- \omega^2 \frac{\rho_0}{\mu_0} \tilde u =  (\omega_0^2 - \omega^2) \frac{\rho_0}{\mu_0} \tilde u,
\quad \text{ in } \Omega, \\
(\nu \times  v)|_{\p\Omega} &= 0,  \quad \text{on } \p \Omega, 
\end{aligned}
\end{equation*}
provided that 
\begin{equation} \label{eq_F_cond_2}
\begin{aligned}
(\tilde u ,\phi)_{L^2(\Omega)^3} = 0, \qquad \forall \phi \in \mathcal E.
\end{aligned}
\end{equation}
It follows that $ u:= v + \tilde u$ solves \eqref{eq_maxwell_bc_3}. Moreover 
\begin{align*}
\omega_0^2 \tfrac{\rho_0}{\mu_0} ( \tilde u ,\phi)_{L^2(\Omega)^3}
&=
-(\nabla \times \tilde u, \nabla \times \phi )_{L^2(\Omega)^3} + \omega_0^{2}\tfrac{\rho_0}{\mu_0} ( \tilde u,\phi)_{L^2(\Omega)^3}  \\
&=
( \nu \times \tilde u,  \nabla \times \phi )_{L^2( \p \Omega)^3} \\
&=
( f ,  \nabla \times \phi )_{L^2( \p \Omega)^3}, 
\end{align*}
which shows that the condition \eqref{eq_F_cond_2} is implied by \eqref{eq_f_cond}.
\end{proof}

\bigskip
\noindent
{\bf{Acknowledgement}} \\
SEB would like to thank the German Research Foundation (DFG) for funding the
project 'Inclusion Reconstruction with Monotonicity-based Methods for the
Elasto-oscillatory Wave Equation' (Reference Number 499303971). 
VP would like to thank the Research Council
of Finland (Flagship of Advanced Mathematics for Sensing, Imaging and Modelling grant
359186) and the Emil Aaltonen Foundation.

\noindent
\\
\\

\begin{thebibliography}{+}

\bibitem[AG23]{AG23}A. Albicker, R. Griesmaier,
Monotonicity in inverse scattering for Maxwell’s equations, Inverse Probl. and Imaging, 17, 68-1
05, 2023.\\

\bibitem[AK02]{AK02}
C. J. S. Alves, R. Kress, On the far-field operator in elastic obstacle scattering,
IMA J. Appl. Math. 67, 2002, 1–21. \\

\bibitem[ANS91]{ANS91} M. Akamatsu, G. Nakamura, S. Steinberg, Identification of Lam\'e coefficients from boundary observations
 \textit{Inverse Problems}, 7, 335--354, 1991. \\

\bibitem[Ar01]{Ar01}
T. Arens, Linear sampling methods for 2D inverse elastic wave scattering, 
Inverse Problems 17 (2001), 1445–1464. \\

\bibitem[AL98]{AL98} S. Arridge, W. Lionheart, Nonuniqueness in diffusion-based optical tomography,
Opt. Lett. 23, 1998. \\


\bibitem[BYZ19]{BYZ19}
G. Bao, T. Yin, F. Zeng,
Multifrequency Iterative Methods for the Inverse Scattering Medium Problems in Elasticity,
SIAM Journal on Scientific Computing, 41(4): B721–B745, 2019. \\

\bibitem[BHFVZ17]{BHFVZ17}
E. Beretta, M. V de Hoop, E. Francini, S. Vessella, J. Zhai,
Uniqueness and Lipschitz stability of an inverse boundary value problem for time-harmonic elasti
c waves,
Inverse Problems, 33, 035013, 2017. \\

\bibitem[BHQ13]{BHQ13}
E. Beretta,M. de Hoop M, L. Qiu, Lipschitz stability of an inverse boundary value problem for ti
me-harmonic elastic
waves, part I: recovery of the density,
Proceedings of the Project Review, Geo-Mathematical Imaging Group
(Purdue University, West Lafayette IN), 1, 263-272, 2013. \\

\bibitem[BC05]{BC05} M. Bonnet, A. Constantinescu, Inverse problems in elasticity,
Inverse Problems 21, 2005. \\

\bibitem[CC06]{CC06} F. Cakoni, D. Colton, Qualitative Methods in
Inverse Scattering Theory, Springer, Berlin, 2006. \\

\bibitem[CDGH20]{CDGH20}
V. Candiani, J. Darde, H. Garde, N. Hyvönen,
Monotonicity-Based Reconstruction of Extreme Inclusions in Electrical Impedance Tomography,
SIAM J. Math. Anal., 52, 6, 2020.\\

\bibitem[Ce96]{Ce96} M. Cessenat, Mathematical Methods in Electromagnetism,
World Scientific, Singapore, 1996. \\

\bibitem[Ci88]{Ci88} P. Ciarlet, Mathematical Elasticity, Volume: Three-dimensional Elasticity,
North-Holland, Amsterdam, 1988.\\

\bibitem[CK98]{CK98}
D. Colton, R. Kress, Inverse acoustic and electromagnetic scattering theory (Vol. 93, pp. xii+-334). Berlin, Springer, 1998. \\

\bibitem[EH21]{EH21} S. Eberle, B. Harrach, 
Shape reconstruction in linear elasticity: Standard and linearized monotonicity method,
Inverse Problems, 37(4):045006, 2021. \\

\bibitem[EHMR21]{EHMR21} S. Eberle, B. Harrach, H. Meftahi, and T. Rezgui, 
Lipschitz Stability Estimate and Reconstruction of Lamé Parameters in Linear  Elasticity, 
Inverse Probl. Sci. Eng., 29 (3), 396-417, 2021.\\

\bibitem[EM21]{EM21} S. Eberle, J. Moll, 
Experimental Detection and Shape Reconstruction of Inclusions in Elastic Bodies via a Monotonicity Method, 
Int. J. Solids Struct., 233, 111169, 2021.\\

\bibitem[EH22]{EH22} S. Eberle, B. Harrach, 
Monotonicity-Based Regularization for Shape Reconstruction in Linear Elasticity, 
Comput. Mech.,  69, 1069-1086, 2022\\

\bibitem[EH23]{EH23} S. Eberle-Blick, B. Harrach,
Resolution Guarantees for the Reconstruction of Inclusions in Linear Elasticity Based on Monotonicity Methods, 
Inverse Problems, 39, 075006, 2023.\\

\bibitem[EH23a]{EH23a} S. Eberle-Blick, N. Hyvönen, 
Bayesian Experimental Design for Linear Elasticity, 
Inverse Problems and Imaging, 18 (6), 1294-1319, 2024.\\

\bibitem[EP24]{EP24} S. Eberle-Blick, V. Pohjola,
The monotonicity method for inclusion detection and the time harmonic elastic wave equation,
Inverse Problems, 40, 045018, 2024.\\

\bibitem[EH19]{EH19}
J. Elschner, G. Hu,
Uniqueness and factorization method for inverse elastic scattering with a single incoming wave,
Inverse Problems, 35(9): 094002, 2019. \\


\bibitem[ER02]{ER02} G. Eskin,  J. Ralston,
On the inverse boundary value problem for linear isotropic elasticity,
Inverse Problems, 18, 907--922, 2002. \\

\bibitem[Fu20a]{Fu20a}T. Furuya, The factorization and monotonicity method for the defect in an
open periodic waveguide,
Journal of Inverse and Ill-posed Problems, vol. 28, no. 6, pp. 783-796, 2020. \\

\bibitem[Fu20b]{Fu20b}T. Furuya,  The monotonicity method for the inverse crack scattering probl
em, Inverse Problems in Science and Engineering 28(11):1-12, 2020.\\

\bibitem[Ge08]{Ge08}
B.\ Gebauer, Localized potentials in electrical impedance tomography,
\textit{Inverse Probl. Imaging} 2, 251--269, 2008. \\

\bibitem[GK08]{GK08}
N. Grinberg, A. Kirsch, The factorization method for inverse problems,
Oxford University Press, USA, 2008. \\

\bibitem[GH18]{GH18} R. Griesmaier, B. Harrach,
Monotonicity in inverse medium scattering on unbounded domains, SIAM J. Appl. Math. 78, 2533–255
7, 2018. \\

\bibitem[Ha09]{Ha09} B. Harrach, On uniqueness in diffuse optical tomography,
Inverse problems, 25, 2009. \\

\bibitem[HLL18]{HLL18}
B. Harrach, Y. Lin, H. Liu,
On Localizing and Concentrating Electromagnetic Fields,
SIAM J. Appl. Math., 78, 5, 10.1137/18M1173605, 2018. \\

\bibitem[HPS19a]{HPS19a} B. Harrach, V. Pohjola, M. Salo,
Dimension bounds in monotonicity methods for the Helmholtz equation,
\textit{SIAM J. Math. Anal.} 51-4, pp. 2995-3019, 2019. \\

\bibitem[HPS19b]{HPS19b}
B. Harrach, V. Pohjola, M. Salo,
Monotonicity and local uniqueness for the Helmholtz equation in a bounded domain,
\textit{Anal. PDE}, Vol. 12, No. 7, 1741--1771, 2019. \\

\bibitem[HS10]{HS10}
B. Harrach, J. K. Seo: Exact Shape-Reconstruction by One-Step Linearization in Electrical Impedance Tomography
SIAM J. Math. Anal. 42 (4), 1505–1518, 2010.\\ 

\bibitem[HU13]{HU13} B. Harrach, M. Ullrich,
Monotonicity-based shape reconstruction in electrical impedance tomography,
\emph{SIAM J. Math. Anal.} Vol. 45, No. 6, pp. 3382–3403, 2013.\\

\bibitem[HK15]{HK15}  F. Hettlich, A. Kirsch,
The Mathematical Theory of Time-Harmonic Maxwell's Equations,
Springer, Berlin, 2015. \\

\bibitem[HKS12]{HKS12}
G. Hu, A. Kirsch, M.  Sini,
Some inverse problems arising from elastic scattering by rigid obstacles.
Inverse Problems, 29(1): 015009, 2012. \\


\bibitem[HLZ13]{HLZ13}
G. Hu, Y. Lu, B. Zhang,
The factorization method for inverse elastic scattering from periodic structures, Inverse Proble
ms, 29(11): 115005, 2013. \\

\bibitem[IS23]{IS23} J. Ilmavirta, H. Schlüter, Gauge freedoms in the anisotropic elastic Dirichlet-to-Neumann map,
https://arxiv.org/abs/2309.15666, 2023. \\

\bibitem[Ik98]{Ik98} M. Ikehata,
Size estimation of inclusion, J. Inverse Ill-Posed Probl. 6:2, 127–140, 1998. \\

\bibitem[Ik90]{Ik90} M. Ikehata, Inversion formulas for the linearized problem for an inverse
boundary value problem in elastic prospection,
\textit{SIAM J. Appl. Math.} 50, 1635--1644, 1990. \\

\bibitem[Ik99]{Ik99}
M. Ikehata, How to draw a picture of an unknown inclusion from boundary measurements. Two mathematical
inversion algorithms, 
Journal of Inverse and Ill-Posed Problems, 7(3):255--271, 1999. \\

\bibitem[II08]{II08}
M. Ikehata, H. Itou,
Enclosure method and reconstruction of a linear crack in an elastic body,
Journal of Physics: Conference Series 135, 012052, 2008. \\

\bibitem[Kr64]{Kr64} M. A. Krasnosel'skij,  
Topological Methods in the Theory of Nonlinear Integral Equations. Oxford: Pergamon Press, 1964. \\

\bibitem[LL81]{LL81} L. Landau, E. Lifschitz, Theory of Elasticity, Pergamon, 1981. \\

\bibitem[Mc00]{Mc00} W. McLean,
Strongly Elliptic Systems and Boundary Integral Equations, Cambridge University Press, 2000. \\

\bibitem[Na88]{Na88}
A. Nachman, Reconstructions from boundary measurements Ann. Math.
128, 531–76, 1988.\\

\bibitem[NU94]{NU94} G. Nakamura, G. Uhlmann, Erratum: Global uniqueness for an inverse boundary
 value problem arising in elasticity,
\textit{Invent. Math.} 152,  205--207, 2003 (Erratum to Invent. Math. 118 (3) (1994) 457--474). \\

\bibitem[OSY92]{OSY92} O. Oleinik, S.Shamaev, G. Yosifian,
Mathematical problems inelasticity and homogenization,
North-Holland, Amsterdam, 1992. \\

\bibitem[Po22]{Po22} V. Pohjola,
On quantitative Runge approximation for the time harmonic Maxwell equations, 
\textit{Trans. Amer. Math. Soc.} 375 , 5727-5751, 2022. \\

\bibitem[Pot06]{Pot06}
R. Potthast, A survey on sampling and probe methods for inverse problems. Inverse Problems, 22(2), 2006. \\

\bibitem[Sc18]{Sc18} B. Schweizer, On Friedrichs Inequality, Helmholtz Decomposition, Vector Potentials, and the div-curl Lemma,
In: Rocca E., Stefanelli U., Truskinovsky L., Visintin A. (eds),
Trends in Applications of Mathematics to Mechanics, Springer INdAM Series, vol 27. Springer, 2018. \\

\bibitem[SFHC14]{SFHC14}
J. Shi, F. Faucher , M. de Hoop, H. Calandra,  Multi-level elastic full waveform inversion in isotropic media via
quantitative Lipschitz stability estimates. Proceedings of the Project Review, Geo-Mathematical Imaging Group,
Chicago, United States. 1:1-34, 2014. \\

\bibitem[TR02]{TR02}
A. Tamburrino and G. Rubinacci,
A new non-iterative inversion method for electrical resistance tomography,
\textit{Inverse Problems}, 6, 18,  1809--1829, 2002. \\

\bibitem[Ta06]{Ta06} A. Tamburrino, Monotonicity based imaging methods for elliptic and parabolic
inverse problems.
\emph{Journal of Inverse and Ill-posed Problems}, 14(6):633–642, 2006. \\

\bibitem[Zh10]{Zh10}
T. Zhou, Reconstructing electromagnetic obstacles by the enclosure method. 
Inverse Problems \& Imaging, 2010, 4 (3) : 547-569. \\


\end{thebibliography}
\end{document}